\newtheorem{theorem}{Theorem}[section]
\newtheorem{lemma}[theorem]{Lemma}
\newtheorem{corollary}[theorem]{Corollary}
\newtheorem{proposition}[theorem]{Proposition}
\theoremstyle{definition}  
\newtheorem{definition} [theorem] {Definition} 
\newtheorem{example} [theorem] {Example}
\newtheorem{remark} [theorem] {Remark}
\newcommand{\C}{{\mathbb{C}}}
\newcommand{\PP}{{\mathbb{P}}}
\newcommand{\R}{{\mathbb{R}}}
\newcommand{\Z}{{\mathbb{Z}}}
\newcommand{\bmu}{{\mbox{\boldmath{$\mu$}}}}
\newcommand{\sbmu}{{\mbox{\boldmath{\scriptsize$\mu$}}}}
\newcommand{\calP}{{\mathcal{P}}}
\newcommand{\calCP}{{\mathcal{CP}}}
\newcommand{\sfk}{{\mathsf{k}}}
\def\Grass{\mathrm{Grass}}
\def\cod{\mathrm{codim}}
\def\Mat{\mathrm{Mat}}
\DeclareMathSymbol{\notdiv}{\mathbin}{AMSb}{"2D}
\numberwithin{equation}{section}
\begin{document}

\title{Strata of rational space curves}

\author{David A.\ Cox\footnote{corresponding author}\\
Department of Mathematics \& Statistics\\
Amherst College, Amherst, MA 01002, USA\\
dacox@amherst.edu\\
{}\\
Anthony A.\ Iarrobino\\ 
Department of Mathematics\\
Northeastern University, Boston MA 02115, USA\\
a.iarrobino@neu.edu}

\date{}

\maketitle

\begin{abstract}
The $\bmu$-invariant $\bmu = (\mu_1,\mu_2,\mu_3)$ of a rational space
curve gives important information about the curve.  In this paper, we
describe the structure of all parameterizations that have the same
$\bmu$-type, what we call a \emph{$\bmu$-stratum}, and as well the
closure of strata.  Many of our results are based on papers by the
second author that appeared in the commutative algebra literature.  We
also present new results, including an
explicit formula for the codimension of the locus of non-proper
parametrizations within each $\bmu$-stratum and a decomposition of the
smallest $\bmu$-stratum based on which two-dimensional rational normal
scroll the curve lies on.
\end{abstract}

\let\thefootnote\relax{}\footnote{\hspace*{-4pt}2010 \emph{Mathematics
    Subject Classification.} 14Q05; 13D40, 65D17}

\let\thefootnote\relax{}\footnote{\hspace*{-4pt}\emph{Key words and phrases.} rational space curve, Hilbert function, parametrization, non-proper maps, $\sbmu$-strata, rational normal scroll}

\vspace*{-20pt}


\section{Introduction}
\label{intro}

A rational curve of degree $n$ in projective $3$-space is parametrized by 
\begin{equation}
\label{spaceparam}
F(s,t) = (a_0(s,t),a_1(s,t), a_2(s,t),a_3(s,t))
\end{equation}
where $a_0,a_1,a_2,a_3$ are relatively prime homogeneous polynomials
of degree $n$.  If $F$ is generically one-to-one and $a_0,a_1,a_2,a_3$
are linearly independent, then the image curve $C$ has degee $n$ and
does not lie in a plane, i.e., $C$ is a genuine \emph{space curve}.  

For a parametrized planar curve of degree $n$, the 1998 paper
\cite{CSC} introduced the idea of a \emph{$\mu$-basis}.  Since then,
$\mu$-bases have proved useful in the study of the singularities of
rational plane curves, as evidenced by the papers \cite{FWL,SCG} in
the geometric modeling literature and \cite{CKPU} in the commutative
algebra literature. 

The groundwork for the space curve case appears in
\cite[Sec.\ 5]{CSC}, and the connection with singularities has been
studied in several papers, including \cite{JWG,SC,SJG,WJG}.  For
references to the (fairly extensive) algebraic geometry literature on
rational space curves, we direct the reader to the bibliography of
\cite{I3}. 

An idea introduced in \cite{CSC} was to study the $\mu$-stratum
consisting of \emph{all} pa\-ram\-e\-trizations of plane rational
curves with given degree and $\mu$-type.  In this paper, we will
extend this idea to rational space curves and more generally to define
the $\bmu$-strata of rational curves in projective $d$-space, based on
the papers \cite{I1,I2} of the second author.  A terse version of the
results presented in Sections~\ref{higher} and \ref{smallest}, written
for commutative algebraists, can be found in \cite{I3}. The results
concerning properness in Section \ref{nonproper} are new to this paper
and this topic is not mentioned in \cite{I1,I2}.  Results concerning
the decomposition of the smallest $\bmu$-stratum are geometric
consequences of \cite{I2}, but were not developed there.

We will review the planar case in Section~\ref{plane} and then discuss
$\bmu$-strata in higher dimensions in Section~\ref{higher}.  We will
give examples to illustrate the unexpected behaviors that can arise.
Section~\ref{further} will study non-proper parametrizations and
explain how parametrizations in the smallest $\bmu$-stratum relate to
two-dimensional rational normal scrolls in $\PP^d$.  Proofs will be
given in Appendix~\ref{proofs}.


\section{Planar Rational Curves}
\label{plane}

For the rest of the paper, we will work over an arbitrary infinite
field $\sfk$, which in practice is usually $\sfk = \R$ or $\C$.  Set
$R = \sfk[s,t]$ and let $R_n$ be the subspace consisting of
homogeneous polynomials of degree $n$.

A rational curve in the projective plane is parametrized by
\begin{equation}
\label{planeparam}
F(s,t) = (a_0(s,t),a_1(s,t), a_2(s,t)),
\end{equation}
where $a_0,a_1,a_2 \in R_n$.  In this section, we will assume that
$a_0,a_1,a_2$ are relatively prime and linearly independent and that
$F$ is generically one-to-one.   

A \emph{moving line} of degree $m$ is a polynomial $A_0(s,t) x +
A_1(s,t) y + A_2(s,t) z$ with $A_0,A_1,A_2 \in R_m$.  It
\emph{follows} the parametrization if
\begin{equation} 
\label{planesyz}
A_0a_0+A_1a_1 + A_2a_2 = 0 \ \text{in}\  R.
\end{equation}
A \emph{$\mu$-basis} for $F$ consists of a pair of moving lines $p,q$
that follow the parametrization and have the property that \emph{any}
moving line that follows the parametrization is a linear combination
(with polynomial coefficients) of $p$ and $q$.  Assuming $\deg(p) \le
\deg(q)$, one sets $\mu = \deg(p)$, so that $\deg(q) = n-\mu$ since it
is known that $\deg(p) + \deg(q) = n$.  In this situation, we say that
$F$ has \emph{type} $\mu$ (this is the terminology used in \cite{SJG}).
Thus the $\mu$-type is the minimum degree of a moving line that
follows $F$.  Note that
\[
1 \le \mu \le \lfloor n/2\rfloor
\]
since $a_0,a_1,a_2$ are linearly independent and $\mu \le n - \mu$.

To connect this with algebraic geometry and commutative algebra, we
introduce the ideal $I = \langle a_0,a_1,a_2\rangle \subseteq R$.
Then, as explained in \cite{CSC}, the Hilbert Syzygy Theorem gives an
exact sequence
\begin{equation}
\label{planeexact}
0 \longrightarrow R(-n-\mu) \oplus R(-2n+\mu) \stackrel{A}
{\longrightarrow} R(-n)^3 \stackrel{B}{\longrightarrow} I
\longrightarrow 0, 
\end{equation}
where $B$ is given by $(a_0,a_1,a_2)$ and $A$ is the $3\times2$ matrix
whose columns are the coefficients of $p$ and $q$, and $BA=0$.  The notation
$R(-n-\mu), R(-2n+\mu), R(-n)$ reflects the degree shifts needed to
make the maps in \eqref{planeexact} preserve degrees.  We note that
$\mu$-bases and $\mu$-types appeared in the algebraic geometry
literature as early as 1986 (see \cite{Asc1,Asc2}).

When we think of $p$ and $q$ as columns of the matrix $A$, then
the Hilbert-Burch Theorem asserts that the cross product $p \times q$
is the parametrization $(a_0,a_1,a_2)$, up to multiplication by a
nonzero constant.  This feature makes it easy to create
parametrizations with given $\mu$-type: just choose generic $p$ and
$q$ of respective degrees $\mu$ and $n-\mu$ and take their cross product.  

To study all parametrizations with the same $\mu$-type, we begin with
the subset $\calP_n \subseteq R_n^3$ consisting of all relatively
prime linearly independent triples $(a_0,a_1,a_2)$ for which the
parametrization is generically one-to-one.  Then, for $1 \le \mu \le
\lfloor n/2 \rfloor$, we have the \emph{$\mu$-stratum}
\[
\calP^{\mu}_n = \{(a_0,a_1,a_2) \in \calP_n \mid
(a_0,a_1,a_2)\ \text{has type }\mu\}. 
\]  
In \cite{CSC}, it was proved that $\calP^{\mu}_n$ is irreducible
of dimension 
\begin{equation}
\label{planedim}
\dim(\calP_n^\mu) = \begin{cases} 3(n+1) ,& \mbox{if}\ \mu = \lfloor
  n/2\rfloor,\\  
	2n+2\mu +4, & \mbox{if}\ \mu < \lfloor n/2\rfloor.\end{cases}
\end{equation}

The $\mu$-stratum $\calP^{\mu}_n$ is not closed in $\calP_n$.  Let
$\overline{\calP}^\mu_n$ denote its Zariski closure in $\calP_n$.
In \cite{CSC}, it was conjectured that
\begin{equation}
\label{planestrat}
\overline{\calP}_n^\mu = \calP^1_n \cup \cdots \cup \calP^{\mu}_n. 
\end{equation}
This was proved in 2004 by D'Andrea \cite{D}.  This result also is a
consequence of the 1977 memoir \cite{I1} or the 2004 article
\cite{I2} by the second author, though these are written from a
very different viewpoint.  It was eight years after \cite{D}
appeared before a connection was made between these papers.

Here is the intuition behind \eqref{planedim} and \eqref{planestrat}:
\begin{itemize}
\item \eqref{planedim} says that the smaller the $\mu$, the more
  special the parame\-trization. 
\item \eqref{planestrat} says that if we are moving around in
  $\calP^{\mu}_n$ and reach its boundary, then with high
  probability, we hit a point of $\calP^{\mu-1}_n$, i.e., $\mu$ drops
  by one unless we are really unlucky.  
\end{itemize}
We will see in the next section that the  results in \cite{I1,I2} also
apply to parametrizations in projective $d$-space, though the analogs
of the above two bullets become more sophisticated as $d$ increases.   

\section{Rational Curves in Projective Space}
\label{higher}

Curves in the plane and $3$-dimensional space are the most important
to geometric modeling.  Since the results of \cite{I1,I2} apply to
curves in projective $d$-space for all $d \ge 1$, we will work in this
greater generality.  One way to think of our approach is that it gives
a unified treatment of rational plane and space curves, as well as
rational curves in higher dimensions.

We therefore start with $d+1$ homogeneous polynomials $a_0,\dots,a_d
\in R_n$.  Then the function
\[
F(s,t) = (a_0(s,t),\dots,a_d(s,t))
\]
parametrizes a curve in projective $d$-space, generalizing
\eqref{spaceparam} and \eqref{planeparam}.  We will assume that
$a_0,\dots,a_d$ are linearly independent, which implies that the image
curve is not contained in a hyperplane.  Note also that the span
$\mathrm{Span}(a_0,\dots,a_d)$ is a $(d+1)$-dimensional subspace of
$R_n$.  This is important, since the results of \cite{I1,I2} for $R =
\sfk[s,t]$ are stated in terms of subspaces of $R_n$ of a given
dimension, equal to $d+1$ in our situation.  We will say more about
this in Appendix~\ref{proofs}.

In Section~\ref{plane}, we made two assumptions beyond linear
independence:
\begin{itemize}
\item The parametrization is proper, i.e., generically one-to-one, and
\item The polynomials in the parametrization are relatively prime.
\end{itemize}
In this section, we will dispense with the first assumption, so that
we allow non-proper param\-etrizations.  We will see in
Section~\ref{nonproper} that this is harmless.  As for the second
assumption, we will give two versions of our main results, one that
assumes relatively prime, and one that does not.

\subsection{$\bmu$-Bases and $\bmu$-Types}
Before stating our results, we need to define $\bmu$-types and
$\bmu$-bases.  

\begin{proposition}
\label{muprop}
Let $a_0,\dots,a_d \in R_n$ be linearly independent and let 
\[
I = \langle a_0,\dots,a_d\rangle \subseteq R
\]  
be the the ideal generated by $a_0,\ldots, a_d$. Then there exist
integers $\mu_1,\dots,\mu_d \ge 1$ and an exact sequence
\begin{equation}
\label{genexact}
0 \longrightarrow {\textstyle\bigoplus_{i=1}^d} R(-n-\mu_i)
\stackrel{A} {\longrightarrow} R(-n)^{d+1}
\stackrel{B}{\longrightarrow} I \longrightarrow 0.
\end{equation}
Furthermore, if we set $h = \gcd(a_0,\dots,a_d)$,
then $\mu_1+\cdots+\mu_d = n-c$, $c = \deg(h)$, and $B =
(a_0,\dots,a_d)$ consists of $h$ times the maximal minors of $A$
(up to sign).
\end{proposition}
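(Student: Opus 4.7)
The plan is to apply the Hilbert Syzygy Theorem to build the resolution \eqref{genexact}, use linear independence to force each $\mu_i \ge 1$, and then invoke the Hilbert--Burch Theorem to identify $B$ in terms of the maximal minors of $A$. Since $R = \sfk[s,t]$ has global dimension $2$, the ideal $I$ has projective dimension at most $1$, so there is a graded free resolution $0\to F_1 \xrightarrow{A} R(-n)^{d+1} \xrightarrow{B} I \to 0$ with $B = (a_0,\dots,a_d)$ and $F_1$ graded free (a finitely generated graded projective $R$-module is free by Quillen--Suslin, or directly by graded Nakayama). A rank count using $\mathrm{rank}(I) = 1$ gives $\mathrm{rank}(F_1) = d$, so $F_1 \cong \bigoplus_{i=1}^d R(-n-\mu_i)$ for some integers $\mu_i$. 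To see each $\mu_i\ge 1$, I would restrict \eqref{genexact} to degree $n$: the source becomes $\bigoplus_i R_{-\mu_i}$, which must vanish because the degree-$n$ map $\sfk^{d+1}\to I_n$ sends the standard basis to the linearly independent $a_0,\dots,a_d$ and therefore has trivial kernel.

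For the identification of $B$, I would peel off $h = \gcd(a_0,\dots,a_d)$: write $a_i = h\,b_i$ with $\gcd(b_0,\dots,b_d)=1$ and $c = \deg(h)$. Since $R$ is a domain, the syzygies of the $a_i$ coincide with those of the $b_i$, so the same matrix $A$ also resolves $J = \langle b_0,\dots,b_d\rangle$. The $b_i$ have no common projective zero, so $J$ is $\mathfrak{m}$-primary of codimension $2$, and the Hilbert--Burch Theorem applies to give $b_i = \pm m_i$, the maximal $d\times d$ minors of $A$; multiplying by $h$ then yields $a_i = \pm h\,m_i$. The degree equality is immediate: the $j$-th column of $A$ has entries of degree $\mu_j$, so every maximal minor is homogeneous of degree $\mu_1 + \cdots + \mu_d$, and comparing degrees in $a_i = \pm h\,m_i$ gives $\mu_1 + \cdots + \mu_d = n - c$. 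The main obstacle is matching the common factor furnished by Hilbert--Burch with the gcd $h$; factoring $h$ out at the start and applying the theorem to the $\mathfrak{m}$-primary ideal $J$, rather than to $I$ itself, sidesteps this cleanly.
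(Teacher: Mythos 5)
Your proof is correct and follows the same overall strategy as the paper's --- peel off the gcd, identify the generators with maximal minors via Hilbert--Burch in the relatively prime case, then put the gcd back --- but it is organized differently and is considerably more self-contained. The paper first divides by $h$ to get $b_i$ with $\gcd = 1$, then cites \cite[Sec.\ 5]{CSC} for the entire package (the resolution, the bound $\mu_i\ge 1$, the sum $\sum\mu_i = n-c$, and the identification of the $b_i$ with the maximal minors of $A$), and finally re-multiplies by $h$ to obtain \eqref{genexact}. You instead build the graded free resolution of $I$ directly from the Hilbert Syzygy Theorem and a rank count, give a clean direct argument that each $\mu_i\ge 1$ by evaluating the resolution in degree $n$ and using linear independence of the $a_i$ to kill the source in that degree, and only afterward peel off $h$ so that Hilbert--Burch applies to the $\mathfrak{m}$-primary ideal $J=\langle b_0,\dots,b_d\rangle$. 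Your degree bookkeeping (each maximal $d\times d$ minor of $A$ is homogeneous of degree $\sum_j\mu_j$, so $a_i=\pm h\, m_i$ forces $n=c+\sum_j\mu_j$) then recovers $\sum\mu_i=n-c$. This is a fine, slightly more elementary route: it makes explicit what the paper outsources to \cite{CSC}, and the degree-$n$ argument for $\mu_i\ge 1$ in particular is a nice observation the paper leaves implicit. The one step you state a bit tersely --- ``the same matrix $A$ also resolves $J$'' --- is indeed correct because $R$ is a domain and $h\ne 0$, so $\sum A_ia_i=0\iff\sum A_ib_i=0$ and the kernel of $(b_0,\dots,b_d)$ is literally the same submodule (with shifted grading) as the kernel of $(a_0,\dots,a_d)$.
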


\begin{proof}
Let $h = \gcd(a_0,\dots,a_d)$ and set $b_i = a_i/h$.  Then
$\gcd(b_0,\dots,b_d) = 1$ and $\deg(b_i) = n-c$ since $c = \deg(h)$.
The results of \cite[Sec.\ 5]{CSC} apply to $b_0,\dots,b_d$, so that
we have an exact sequence
\[
0 \longrightarrow {\textstyle\bigoplus_{i=1}^d} R(-n-c-\mu_i)
\stackrel{A} {\longrightarrow} R(-n-c)^{d+1}
\stackrel{\widetilde{B}}{\longrightarrow} \langle b_0,\dots,b_d\rangle
\longrightarrow 0, 
\]
where $\widetilde{B} = (b_0,\dots,b_d)$ and $A$ is a $(d+1)\times d$
matrix whose columns form a basis of the module of moving hyperplanes
$A_0x_0 + \cdots + A_dx_d$ that follow the parametrization given by
$(b_0,\dots,b_d)$.  Since $\deg(b_i) = n-c$, \cite{CSC} also
implies that $\mu_1+\cdots+\mu_d = n-c$.  Since $a_i = h b_i$ and $h
\ne 0$, we have 
\[
\sum_{i=1}^d A_i a_i = 0 \iff \sum_{i=1}^d A_i b_i = 0.
\]
Setting $B = h\widetilde{B}$ gives the exact sequence
\eqref{genexact}.  The $b_i$ are (up to sign) the maximal minors of
$A$ by \cite{CSC}, and the final assertion of the proposition follows.
\end{proof}

If we require $\mu_1 \le \cdots \le \mu_d$, then the $d$-tuple
\begin{equation}
\label{muinvt}
\bmu = (\mu_1,\dots,\mu_d)
\end{equation}
is uniquely determined by $(a_0,\dots,a_d)$.  We call \eqref{muinvt}
the \emph{$\bmu$-type} of $(a_0,\dots,a_d)$, and the columns of the
matrix $A$ in \eqref{genexact} form a \emph{$\bmu$-basis}.  Note
that when $d = 2$ and $\gcd(a_0,a_1,a_2) =1$, Proposition~\ref{muprop}
tells us that the $\bmu$-type can be written
\[
\bmu = (\mu,n-\mu).
\]
Hence we recover the $\mu$-type in the planar case when the
polynomials are relatively prime.  

\begin{remark} We use ``$\bmu$'' in two ways in this paper.  When
  followed by a hyphen, as in $\bmu$-type, $\bmu$-basis or
  $\bmu$-stratum, the $\bmu$ is part of the notation and has no
  specific value.  But when used by itself, $\bmu$ denotes a vector of
  integers, such as $\bmu = (1,2,3)$.
\end{remark}

\subsection{The Relatively Prime Case}\label{relativelyprime}
To state our first result, let 
\[
\calP_{n,d} \subseteq R_n^{d+1}
\]
consist of all $(a_0,\dots,a_d) \in R_n^{d+1}$ such that
$a_0,\dots,a_d$ are linearly independent and relatively prime.  Linear
independence means that the image of the parametrization does not lie
in any hyperplane of $\PP^d$.  We will always assume that $n \ge d$
(otherwise $\calP_{n,d}$ is empty) and $d \ge 2$ (curves in $\PP^1$
are not interesting).  One can show that $\calP_{n,d}$ is a nonempty
Zariski open subset of $R_n^{d+1}$, i.e., the complement of a proper
closed subvariety of $R_n^{d+1}$.

Given integers with $1 \le \mu_1 \le \cdots \le \mu_d$, we set $\bmu =
(\mu_1,\dots, \mu_d)$ and $|\bmu| = \mu_1 + \cdots + \mu_d$.  We call
$\bmu$ a \emph{$d$-part partition}.  (Many references write partitions
in descending order, e.g., $7 = 4 + 2 + 1$.  We use ascending order
since this is how $\bmu$-bases are written in the geometric modeling
literature.)

If $\bmu$ is a $d$-part partition of $n$ (so $n = |\bmu|$), then we
define the \emph{$\bmu$-stratum}
\begin{equation}
\label{stratadef1}
\calP^\sbmu_{n,d} = \{(a_0,\dots,a_d) \in \calP_{n,d} \mid
(a_0,\dots,a_d)\ \text{has type }\bmu\}. 
\end{equation}
This set has the following structure.

\begin{theorem}\label{dimPndthm}
\label{relprimedim}
Assume that $\bmu$ is a $d$-part partition of $n$.  Then
$\calP_{n,d}^\sbmu$ is a Zariski open subset of a subvariety of
$R_n^{d+1} \simeq \sfk^{(d+1)(n+1)}$.  Furthermore,
$\calP_{n,d}^\sbmu$ is irreducible of dimension
\[
\dim(\calP_{n,d}^\sbmu) = (d+1)(n+1)- \sum_{i > j} \max(0,\mu_i -
  \mu_j -1).
\]
\end{theorem}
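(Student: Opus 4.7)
The plan is to realize $\calP_{n,d}^\sbmu$ as the image of a natural morphism from an affine space of $\bmu$-basis matrices, and to read off irreducibility, dimension, and local closedness from this picture. Set
\[
\mathcal{M}_\sbmu \;=\; \bigoplus_{i=1}^{d} R_{\mu_i}^{\,d+1},
\]
so a point is a $(d{+}1)\times d$ matrix $A$ whose $i$-th column has entries in $R_{\mu_i}$, and $\dim \mathcal{M}_\sbmu = (d{+}1)\sum_i(\mu_i{+}1) = (d{+}1)(n+d)$. Let $\mathcal{M}_\sbmu^\circ\subseteq\mathcal{M}_\sbmu$ be the open subset on which the signed maximal minors $\Phi(A)=(a_0,\dots,a_d)$ are linearly independent and relatively prime; generic column choices give such an $A$ by the Hilbert--Burch ``cross product'' construction (as in the remarks after \eqref{planeexact}), so $\mathcal{M}_\sbmu^\circ$ is a nonempty, hence dense, open subset of the irreducible affine space $\mathcal{M}_\sbmu$.

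Next I would show that $\Phi\colon\mathcal{M}_\sbmu^\circ \to R_n^{d+1}$ surjects onto $\calP_{n,d}^\sbmu$. The image lies in the stratum because, when the minors are relatively prime, the sequence \eqref{genexact} is automatically an exact (and minimal) free resolution with exactly the prescribed shifts; conversely Proposition~\ref{muprop} furnishes a preimage for every tuple of type $\bmu$. This already yields irreducibility of $\calP_{n,d}^\sbmu$. For the ``Zariski open in a subvariety'' claim, I would invoke upper semicontinuity of graded Betti numbers in the flat family over $\calP_{n,d}$: the locus where the $\bmu$-type strictly dominates $\bmu$ (with respect to the partial order on partitions inherited from the Hilbert function) is closed, so the closure $\overline{\calP_{n,d}^\sbmu}\subseteq R_n^{d+1}$ is an irreducible subvariety and $\calP_{n,d}^\sbmu$ is its complement in the (closed) union of strictly smaller strata.

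Finally, I would compute the fiber of $\Phi$. Two matrices $A,A'\in\mathcal{M}_\sbmu^\circ$ satisfy $\Phi(A)=\Phi(A')$ precisely when $A'=Ag$ for some graded automorphism $g$ of $\bigoplus_i R(-n-\mu_i)$ with $\det g=1$, since $A\mapsto Ag$ scales the signed minors by $\det g$. Writing $G_\sbmu$ for the full graded automorphism group, an element is an invertible $d\times d$ matrix $(g_{ji})$ with $g_{ji}\in R_{\mu_i-\mu_j}$ (and $=0$ whenever $\mu_i<\mu_j$). Partitioning the entries according as $\mu_i<\mu_j$, $\mu_i=\mu_j$, or $\mu_i>\mu_j$ yields the (short but careful) count
\[
\dim G_\sbmu \;=\; d^2 + \sum_{i>j}\max(0,\mu_i-\mu_j-1).
\]
Since the fiber is an $SG_\sbmu$-torsor of dimension $\dim G_\sbmu-1$, and $(d{+}1)(n+d)+1-d^2=(d{+}1)(n+1)$, we obtain
\[
\dim \calP_{n,d}^\sbmu \;=\; (d{+}1)(n+d) - \bigl(\dim G_\sbmu-1\bigr) \;=\; (d{+}1)(n+1) - \sum_{i>j}\max(0,\mu_i-\mu_j-1),
\]
as claimed. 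The bookkeeping in $\dim G_\sbmu$ is the most hands-on piece, but the substantive obstacle is the upper-semicontinuity step underlying local closedness, which is where the algebraic theory of jumping $\bmu$-types developed in \cite{I1,I2} does the real work.
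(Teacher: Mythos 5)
Your proposal is correct, and it takes a genuinely different route from the paper. The paper works on the Grassmannian side: it passes through the projection $\pi:\calCP_{n,d}\to\Grass(d+1,R_n)$ (Lemma~\ref{coverlem}), translates $\bmu$-types into Hilbert functions of $R/(V)$ (Lemmas~\ref{mucomparisonlem} and~\ref{HVmu}), and then imports irreducibility and the codimension formula for the Hilbert-function strata $\mathrm{GA}_T(d+1,n)$ directly from \cite{I2} (Theorems~\ref{key78thm} and~\ref{codthm}); the local closedness falls out of the closure statement Theorem~\ref{genclosure}. You instead parametrize the stratum \emph{upstairs} by the space $\mathcal{M}_\sbmu$ of candidate $\bmu$-basis matrices and take signed maximal minors, so irreducibility comes for free from the irreducibility of $\mathcal{M}_\sbmu$, and the dimension is obtained by a direct fiber count using the graded automorphism group $G_\sbmu$ of $\bigoplus_i R(-n-\mu_i)$. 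Your count $\dim G_\sbmu = \sum_{i,j}\max(0,\mu_i-\mu_j+1) = d^2 + \sum_{i>j}\max(0,\mu_i-\mu_j-1)$ is correct, and combined with $\dim\mathcal{M}_\sbmu = (d+1)(n+d)$ and the algebraic identity $(d+1)(n+d)+1-d^2=(d+1)(n+1)$, this reproduces the stated dimension formula. This is a more elementary and self-contained derivation of the dimension (essentially the higher-dimensional version of the argument hinted at in \cite{CSC} for the planar case), whereas the paper's route buys uniformity with the rest of its results, all of which it pulls from the Hilbert-function framework of \cite{I2}.

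Two points deserve a bit more care than your sketch gives them. First, the surjectivity and injectivity-up-to-$SG_\sbmu$ of $\Phi$ rely on uniqueness of minimal graded free resolutions, and on the observation that for $A\in\mathcal{M}_\sbmu^\circ$ the Hilbert--Burch complex is automatically \emph{minimal}: all entries of $A$ have positive degree since $\mu_i\ge 1$, and linear independence of the minors rules out degree-$0$ first syzygies. Without minimality you could not conclude that the resulting tuple has type exactly $\bmu$ rather than some $\bmu'\le\bmu$. Second, for local closedness, what you actually need is only the \emph{weak} semicontinuity statement that $\{V': H_{V'}\ge H_V\}$ is closed (this is an intersection of rank conditions on the multiplication maps $R_{m-n}\otimes V'\to R_m$, so purely determinantal), together with Lemma~\ref{mucomparisonlem}(2) identifying this locus with $\bigcup_{\sbmu'\le\sbmu}\calP_{n,d}^{\sbmu'}$; the phrase ``upper semicontinuity of Betti numbers in a flat family'' is slightly off since the tautological family over $\calP_{n,d}$ is not flat, but the Hilbert-function semicontinuity argument is elementary and suffices. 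The full Theorem~\ref{genclosure} (that the closure equals the whole union) is strictly stronger and is not needed for local closedness.
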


The proofs of all theorems in this section will be given in
Appendix~\ref{proofs}.  

\begin{example}
In the plane case, we have $d = 2$ and $\bmu = (\mu,n-\mu)$.  Here,
$\calP_{n,d}^\sbmu = \calP_n^\mu$ as defined in Section~\ref{plane},
and then Theorem~\ref{relprimedim} implies that
\begin{align*}
\dim(\calP_n^\mu) &= 3(n+1) - \max(0, (n-\mu) - \mu
-1)\\ &= \begin{cases} 3(n+1), & \mbox{if}\ n-\mu = \mu, \text{ i.e.,
  } \mu = 
  \lfloor n/2\rfloor,\\ 2n+2\mu+4, & \mbox{if}\ n-\mu > \mu, \text{
    i.e., } \mu < 
  \lfloor n/2\rfloor, \end{cases}
\end{align*}
in agreement with \eqref{planedim}.  
\end{example}

\begin{example}
A space curve case studied in \cite{JWG} is $\bmu = (1,1,n-2)$.
Assuming $n \ge 4$, one computes that
\[
\dim(\calP_{n,3}^\sbmu) = 4(n+1) - 2\max(0, (n-2)-1 -1) = 2n+12.
\]
We will soon see that this is the smallest $\bmu$-stratum of $\calP_{n,3}$.
\end{example}

In general, $\calP_{n,d}^\sbmu$ is not a subvariety of $\calP_{n,d}$.
We let $\overline{\calP}_{n,d}^\sbmu$ denote its Zariski closure in
$\calP_{n,d}$, i.e., the smallest subvariety of $\calP_{n,d}$
containing ${\calP}_{n,d}^\sbmu$.  Theorem~\ref{relprimedim}
tells us that $\calP_{n,d}^\sbmu$ is Zariski open in
$\overline{\calP}_{n,d}^\sbmu$.  The theorem also implies that
$\overline{\calP}_{n,d}^\sbmu$ is irreducible with
\[
\dim(\overline{\calP}_{n,d}^\sbmu) = \dim(\calP_{n,d}^\sbmu).
\]

The expectation is that the complement $\overline{\calP}_{n,d}^\sbmu
\setminus \calP_{n,d}^\sbmu$ should consist of ``smaller''
$\bmu$-strata.  We compare different $\bmu$-types as follows.

\begin{definition}\label{orderdef}
Given $d$-part partitions $\bmu$ and
$\bmu'$, we define $\bmu \le \bmu'$ provided
\[
\mu_1 \le \mu_1',\ \mu_1+\mu_2 \le
\mu_1'+\mu_2',\ \dots,\ \mu_1+\cdots+ \mu_{d} \le
\mu_1'+\cdots+\mu_{d}'.
\]
Note that $\bmu \le \bmu'$ implies $|\bmu| \le |\bmu'|$.  
\end{definition}

We can now describe the Zariski closure of $\calP_{n,d}^\sbmu$ in
$\calP_{n,d}$.  Recall from the discussion leading up to
\eqref{stratadef1} that all $\bmu$-strata occuring in $\calP_{n,d}$
satisfy $|\bmu| = n$.

\begin{theorem}
\label{relprimeclosure}
The Zariski closure of $\calP_{n,d}^\sbmu$ in $\calP_{n,d}$ is
$\overline{\calP}_{n,d}^\sbmu = \bigcup_{\sbmu' \le \sbmu}
\calP_{n,d}^{\sbmu'}$.
\end{theorem}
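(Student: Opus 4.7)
The plan is to prove the two set-theoretic inclusions separately.

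\emph{The inclusion $\overline{\calP_{n,d}^\sbmu} \subseteq \bigcup_{\sbmu'\le\sbmu}\calP_{n,d}^{\sbmu'}$} follows from upper semicontinuity of the Hilbert function. Given a specialization inside $\calP_{n,d}$ of a generic point of $\calP_{n,d}^\sbmu$ to a point of $\calP_{n,d}^{\sbmu'}$, the graded dimension $\dim_\sfk(R/I_\lambda)_k$ can only go up in the limit. Since we are in the relatively prime case, Proposition~\ref{muprop} expresses both sides as an explicit function of the $\bmu$-type,
\[
\dim_\sfk(R/I_\lambda)_k \;=\; \dim R_k \;-\; (d+1)\dim R_{k-n} \;+\; \sum_{i=1}^d \max\!\bigl(0,\, k-n-\mu_i(\lambda)+1\bigr),
\]
so upper semicontinuity becomes, writing $m = k-n$,
\[
\sum_{i=1}^d \max(0,\,m-\mu'_i+1) \;\ge\; \sum_{i=1}^d \max(0,\,m-\mu_i+1) \quad\text{for all } m \ge 0.
\]
A short combinatorial lemma, proved by comparing the slopes of the piecewise-linear function $f_\sbmu(m) = \sum_i \max(0,m-\mu_i+1)$ at its breakpoints $\mu_i$, then shows that (given $|\bmu| = |\bmu'| = n$) this family of inequalities is equivalent to the partial-sum condition $\bmu' \le \bmu$ of Definition~\ref{orderdef}.

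\emph{The inclusion $\calP_{n,d}^{\sbmu'} \subseteq \overline{\calP_{n,d}^\sbmu}$ for $\bmu' \le \bmu$:} By transitivity of ``lies in the Zariski closure of,'' it suffices to treat the case where $\bmu'$ is covered by $\bmu$ in the partial-sum poset, so that $\bmu'$ is obtained from $\bmu$ by decreasing some $\mu_k$ by $1$ and increasing some $\mu_\ell$ by $1$ with $k<\ell$, then re-sorting. For such a covering pair I would construct a flat one-parameter family of $(d+1)\times d$ matrices $A_\lambda$ with columns of degrees $\mu_1,\dots,\mu_d$ whose Hilbert--Burch maximal minors (cf.\ Proposition~\ref{muprop}) give a parametrization in $\calP_{n,d}^\sbmu$ for $\lambda \ne 0$ and that specialize at $\lambda=0$, after dividing out the common factor the minors then acquire, to a prescribed parametrization in $\calP_{n,d}^{\sbmu'}$. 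A cleaner alternative is to pass to the Grassmannian $\Grass(d+1, R_n)$ via $(a_0,\dots,a_d) \mapsto \mathrm{Span}(a_0,\dots,a_d)$, identify the $\bmu$-stratum with the corresponding Iarrobino stratum, and invoke the closure result of \cite{I1,I2}.

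The main obstacle is the explicit degeneration in the second inclusion, because column operations on a $\bmu$-basis matrix preserve its maximal minors and hence cannot alter the $\bmu$-type; a genuine change of type forces the splitting type of the syzygy bundle $\bigoplus_i \mathcal{O}_{\PP^1}(-\mu_i)$ to jump, which is the Shatz stratification phenomenon for vector bundles on $\PP^1$. Carrying this out by hand requires a case analysis on the two columns involved in the elementary move together with a verification that the maximal minors at $\lambda=0$ acquire precisely the common factor needed to land in $\calP_{n,d}^{\sbmu'}$ rather than in some intermediate stratum---and this is exactly the combinatorics that \cite{I1,I2} organize through Hilbert functions of graded $\sfk$-algebras, which is why those papers form the backbone of the appendix proof.
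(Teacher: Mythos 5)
Your first inclusion is sound and runs parallel to what the paper does in Lemma~\ref{mucomparisonlem}: the minimal free resolution \eqref{genexact} gives the closed-form Hilbert function, the rank of the multiplication map $R_{m-n}^{d+1}\to R_m$ is lower semicontinuous so $\dim_\sfk(R/I)_m$ is upper semicontinuous, and the piecewise-linear comparison you describe is precisely the graph argument the paper carries out (comparing slopes at breakpoints $\alpha_i = n+\mu_i-1$) to conclude $H_{V'}\ge H_V \Rightarrow \bmu'\le\bmu$. So that half of the theorem you have essentially reconstructed.

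The second inclusion is where there is a genuine gap, and the particular construction you sketch cannot work as written. If $A_\lambda$ is a family of $(d+1)\times d$ matrices with column degrees fixed at $\mu_1,\dots,\mu_d$, then at $\lambda=0$ there are only two possibilities: either the maximal minors of $A_0$ are still coprime, in which case by Hilbert--Burch the complex $0\to \bigoplus R(-n-\mu_i)\xrightarrow{A_0} R(-n)^{d+1}\to I\to 0$ is exact and, since every entry of $A_0$ has positive degree $\mu_j\ge 1$, it is automatically a minimal resolution, so the limiting $\bmu$-type is still $\bmu$; or the minors acquire a common factor $h$ of degree $c>0$, in which case the limit point has left $\calP_{n,d}$ and landed in $\calCP_{n,d}\setminus\calP_{n,d}$. ``Dividing out the common factor'' produces a parametrization of degree $n-c\ne n$, hence not a point of $\calP_{n,d}^{\sbmu'}$, which by hypothesis consists of degree-$n$ relatively prime tuples. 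You cannot realize a jump of splitting type within $\calP_{n,d}$ by a family of Hilbert--Burch matrices of fixed column degrees; a family whose limit lies in $\calP_{n,d}^{\sbmu'}$ must be constructed at the level of the tuples $(a_0,\dots,a_d)$, not at the level of fixed-shape syzygy matrices, and the $\bmu$-basis of the family will fail to vary algebraically across $\lambda=0$ precisely because the syzygy bundle jumps.

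Your ``cleaner alternative'' --- pass through $\pi:\calCP_{n,d}\to\Grass(d+1,R_n)$, identify $\calP_{n,d}^\sbmu$ with the Iarrobino stratum, and invoke the closure result of \cite{I1,I2} --- is in fact not an alternative but the paper's actual proof: Lemma~\ref{coverlem} makes $\pi$ a $\mathrm{GL}(d+1,\sfk)$-bundle, Lemmas~\ref{mucomparisonlem} and \ref{HVmu} translate $\bmu$-types into Hilbert functions, and Theorem~\ref{key78thm} (citing \cite[Theorems 2.17 and 2.32]{I2}) supplies the hard inclusion $\calP_{n,d}^{\sbmu'}\subseteq\overline{\calP}_{n,d}^\sbmu$ for $\bmu'\le\bmu$. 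So the proposal as it stands is not a proof: it reproduces the easy containment, but the substantive containment is deferred to the very reference the paper relies on, and the explicit degeneration you propose as a substitute has a structural flaw.
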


Since $\bmu$-strata are disjoint, this theorem implies that
$\overline{\calP}_{n,d}^\sbmu \setminus \calP_{n,d}^{\sbmu} =
\bigcup_{\sbmu' < \sbmu} \calP_{n,d}^{\sbmu'}$, confirming our
intuition that $\overline{\calP}_{n,d}^\sbmu \setminus
\calP_{n,d}^{\sbmu}$ is a union of smaller strata.

\begin{example}
Since $(\mu,n-\mu) \le (\mu',n-\mu')$ if and only if $\mu \le \mu'$,
we see that Theorem~\ref{relprimeclosure} reduces to
\eqref{planestrat} when $d = 2$.
\end{example}

\begin{example}
One easily checks that $(1,1,n-2) \le \bmu$ for all $3$-part
partitions $\bmu$ of $n$.  This and Theorem~\ref{relprimeclosure}
justify our earlier claim that $\calP_{n,3}^{(1,1,n-2)}$ is the
smallest $\bmu$-stratum of $\calP_{n,3}$.
Proposition~\ref{bigsmallprop} will describe the smallest
$\bmu$-stratum of $\calP_{n,d}$.

\end{example}

\begin{example}
\label{ex63}
Sextic curves in dimension $3$ have been studied in \cite{JWG}.  Here,
the stratification is especially simple since the $3$-part partitions
of $6$ are given by $(1,1,4) \le (1,2,3) \le (2,2,2)$.  Hence
\[
\calP_{6,3} = \calP_{6,3_{28}}^{(2,2,2)} \cup
\calP_{6,3_{27}}^{(1,2,3)} \cup \calP_{6,3_{24}}^{(1,1,4)},
\]
and the Zariski closure of each stratum consists of that stratum
together with those to the right of it in the above union.  The small
subscript gives the dimension of each stratum.
\end{example}

\begin{example}
\label{ex93}
When $d = 3$, the smallest $n$ for which incomparable $\bmu$-types
exist is $n = 9$, and the types in question are $(1,4,4)$ and
$(2,2,5)$.  This gives the following stratification of $\calP_{9,3}$:
\[
\xymatrix@R=10pt@C=1pt@W=1pt{
& \hskip8pt(3,3,3)_{\scriptscriptstyle40} \ar@{-}[d] &\\ 
& \hskip8pt(2,3,4)_{\scriptscriptstyle39} \ar@{-}[ld] \ar@{-}[rd] &\\
\hskip13pt(1,4,4)_{\scriptscriptstyle36}\hskip-18pt \ar@{-}[rd] && 
\hskip-13pt(2,2,5)_{\scriptscriptstyle36} \ar@{-}[ld]\\
& \hskip8pt(1,3,5)_{\scriptscriptstyle35}\ar@{-}[d] &\\ 
& \hskip8pt(1,2,6)_{\scriptscriptstyle33}\ar@{-}[d] &\\ 
& \hskip8pt(1,1,7)_{\scriptscriptstyle30}&
}
\]
Here, we have written
$\calP_{9,3_{\mathrm{dim}}}^{(\mu_1,\mu_2,\mu_3)}$ more simply as
$(\mu_1,\mu_2,\mu_3)_{\scriptscriptstyle\mathrm{dim}}$, where
``$\mathrm{dim}$'' gives the dimension of the stratum.  The closure of
a stratum consists of the stratum and everything strictly below it in
the diagram.

One consequence of the diagram is that if we move around in
$\calP_{9,3}^{(2,3,4)}$ and reach its boundary, then with high
probability we hit a point in \emph{either} $\calP_{9,3}^{(2,2,5)}$ or
$\calP_{9,3}^{(1,4,4)}$, and these possibilities are equally likely
since both have codimension $3$ in $\overline{\calP}_{9,3}^{(2,3,4)}$.
\end{example}

\subsection{The Largest and Smallest Strata} The stratification of
$\calP_{9,3}$ shown in Example~\ref{ex93} has $\bmu = (3,3,3)$ at the
top:  it is the maximum stratum in the partial order of Definition
\ref{orderdef}  and it is also the unique stratum having the largest
dimension 40. The stratum $\bmu=(1,1,7)$ at the bottom is the minimum
in the partial order of Definition 3.5 and also the unique stratum of
smallest dimension 20. This generalizes as follows. 

\begin{proposition}
\label{bigsmallprop}
Given integers $n \ge d \ge 2$, write $n = kd + r$ where $k,r \in \Z$
and $0 \le r < d$.  Then any $d$-part partition $\bmu$ of $n$
satisfies
\[
\bmu_{\mathrm{min}} = (\underbrace{1,\dots,1}_{d-1},n-d+1) \le \bmu \le
(\underbrace{k,\dots,k}_{d-r},\underbrace{k+1,\dots,k+1}_{r}) =
\bmu_{\mathrm{max}}. 
\] 
Furthermore:
\begin{enumerate}
\item $\dim(\calP_{n,d}^{\sbmu_{\mathrm{max}}}) = (d+1)(n+1)$. 
\item $\bmu_{\mathrm{min}} = \bmu_{\mathrm{max}}$ if and only if $n =
  d$ or $n = d+1$.
\item If $n \ge d+1$, then $\dim(\calP_{n,d}^{\sbmu_{\mathrm{min}}}) =
  d^2+d+2n$ and ${\mathcal P}_{n,d}^{\sbmu_{\mathrm{min}}}$ has
  codimension $(d-1)(n-d-1)$ in $\mathcal{
    P}_{n,d}=\overline{\calP}_{n,d}^{\sbmu_{\mathrm{max}}}$.
\end{enumerate}
\end{proposition}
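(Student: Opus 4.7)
The plan is to treat Theorem~\ref{relprimedim} and Theorem~\ref{relprimeclosure} as black boxes, reducing the proof to combinatorial checks on the partition $\bmu$.

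For the ordering $\bmu_{\min} \le \bmu$, note that $\mu_i \ge 1$ for each $i$ by Proposition~\ref{muprop}, so the partial sums of $\bmu$ satisfy $\mu_1 + \cdots + \mu_j \ge j$ for $j < d$, with both sides equal to $n$ at $j = d$. For $\bmu \le \bmu_{\max}$, I would use a standard majorization (``squeezing'') argument: whenever $\bmu \ne \bmu_{\max}$, there exist $i < j$ with $\mu_j - \mu_i \ge 2$, and replacing $\mu_i \to \mu_i + 1$ and $\mu_j \to \mu_j - 1$ followed by a re-sort produces a new ascending $d$-part partition of $n$ with weakly larger partial sums. Iterating this process terminates at $\bmu_{\max}$, which is characterized (among ascending $d$-part partitions of $n$) by having all pairwise differences at most $1$.

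For part~(i), Theorem~\ref{relprimedim} applied to $\bmu_{\max}$ gives
\[
\dim \calP_{n,d}^{\sbmu_{\max}} = (d+1)(n+1) - \sum_{i > j} \max(0, \mu_i - \mu_j - 1),
\]
and since any two entries of $\bmu_{\max}$ differ by at most $1$, every summand vanishes. Part~(ii) is a direct check: if $\bmu_{\min} = \bmu_{\max}$, then matching first entries forces $k = 1$, and matching last entries forces $n - d + 1 \in \{1, 2\}$, hence $n \in \{d, d+1\}$; the converse is verified directly.

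For part~(iii), apply Theorem~\ref{relprimedim} to $\bmu_{\min} = (1, \dots, 1, n - d + 1)$. The only nonzero contributions to $\sum_{i > j} \max(0, \mu_i - \mu_j - 1)$ come from pairs $(d, j)$ with $j < d$, each contributing $n - d - 1 \ge 0$, for a total of $(d-1)(n-d-1)$; subtracting from $(d+1)(n+1)$ and simplifying yields $\dim \calP_{n,d}^{\sbmu_{\min}} = d^2 + d + 2n$. Part~1 combined with Theorem~\ref{relprimeclosure} shows $\calP_{n,d} = \overline{\calP}_{n,d}^{\sbmu_{\max}}$, which by part~(i) has dimension $(d+1)(n+1)$, and subtracting gives the stated codimension. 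The only step with real content is the majorization inequality $\bmu \le \bmu_{\max}$; everything else is arithmetic.
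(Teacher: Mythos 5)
Your proposal is correct and takes essentially the same approach the paper indicates: apply Theorem~\ref{relprimedim} to $\bmu_{\mathrm{max}}$ and $\bmu_{\mathrm{min}}$ to get the dimension formulas, and treat the ordering claims and part~(2) as elementary combinatorics. The paper simply states that the dimension formulas follow from Theorem~\ref{relprimedim} and ``omits the rest of the straightforward proof,'' so what you have written is in effect a fleshed-out version of exactly that argument, with the majorization/squeezing step supplying the one piece of real content the paper leaves to the reader.
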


\begin{proof}
The formulas for $\dim(\calP_{n,d}^{\sbmu_{\mathrm{min}}})$ and
$\dim(\calP_{n,d}^{\sbmu_{\mathrm{max}}})$ follow from
Theorem~\ref{relprimedim}. We omit the rest of the straightforward
proof.
\end{proof}

Applied to $\calP_{9,3}$, this proposition gives $\bmu_{\mathrm{max}}
= (3,3,3)$ and $\bmu_{\mathrm{min}} = (1,1,7)$.  Furthermore,
$\calP_{9,3}^{(1,1,7)}$ has dimension $ 3^2+3+2\cdot 9 = 30$, and its
codimension in $\mathcal P_{n,d}=\overline{\calP}_{9,3}^{(3,3,3)}$ is
$(3-1)(9-3-1) = 10$, in agreement with Example~\ref{ex93}.

We will say more about the structure of
$\calP_{n,d}^{\sbmu_{\mathrm{min}}}$ in Section~\ref{smallest}.

\subsection{The General Case}
\label{generalcase}
We can also allow $a_0,\dots,a_d$ to have a common factor.  Let
\[
\calCP_{n,d} \subseteq R_n^{d+1}
\]
consist of all $(a_0,\dots,a_d) \in R_n^{d+1}$ such that
$a_0,\dots,a_d$ are linearly independent.  This has $\calP_{n,d}$ as
an open subset and in addition contains those parametrizations where
$(a_0,\dots,a_d)$ have a nontrivial common factor.  Recall from
Proposition~\ref{muprop} that the $\bmu$-type of $(a_0,\dots,a_d) \in
\calCP_{n,d}$ satisfies $|\bmu| = n-c$, where
$\deg(\gcd(a_0,\dots,a_d)) = c$.

Given a $d$-part partition $\bmu$ with $|\bmu| \le n$, we have the
$\bmu$-stratum
\[
\calP^\sbmu_{n,d} = \{(a_0,\dots,a_d) \in \calCP_{n,d} \mid
(a_0,\dots,a_d)\ \text{has type }\bmu\}. 
\]  
Since a $d$-part partition satisfies $|\bmu| \ge d$, we will always
assume that $d \le |\bmu| \le n$.  Note also that the $\bmu$-stratum
$\calP^\sbmu_{n,d}$ lies in $\calP_{n,d}$ (i.e., is one of the strata
\eqref{stratadef1}) if and only if $|\bmu| = n$.

\begin{theorem}
\label{gendim} 
Let $\bmu$ be a $d$-part partition satisfying $d \le |\bmu|\le n$. 
Then $\calP_{n,d}^\sbmu$ is a Zariski open subset of
a subvariety of $R_n^{d+1}$.  Furthermore, $\calP_{n,d}^\sbmu$ is
irreducible of dimension
\[
\dim(\calP_{n,d}^\sbmu) =d|\bmu|+d+n+1 - \sum_{i>j}
\max(0,\mu_i-\mu_j-1).
\]
\end{theorem}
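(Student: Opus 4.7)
The plan is to reduce the general case to the relatively prime case of Theorem~\ref{relprimedim} by pulling out the greatest common divisor. Set $c = n - |\bmu| \ge 0$ and consider the multiplication morphism
\begin{equation*}
\Phi\colon (R_c \setminus \{0\}) \times \calP_{n-c,d}^\sbmu \longrightarrow R_n^{d+1},\quad (h,(b_0,\dots,b_d)) \longmapsto (hb_0,\dots,hb_d).
\end{equation*}
By Proposition~\ref{muprop}, any $(a_0,\dots,a_d) \in \calP_{n,d}^\sbmu$ factors uniquely (up to scalar) as $a_i = hb_i$ with $h = \gcd(a_0,\dots,a_d)$ of degree $c$ and $(b_0,\dots,b_d) \in \calP_{n-c,d}^\sbmu$. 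Hence $\Phi$ surjects onto $\calP_{n,d}^\sbmu$, and its fibers are precisely the orbits of the free $\sfk^*$-action $\lambda \cdot (h,b) = (\lambda h, \lambda^{-1}b)$.

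To see that $\calP_{n,d}^\sbmu$ is locally closed in $R_n^{d+1}$, I first show that the locus $V_c = \{a : \deg\gcd(a_0,\dots,a_d) \ge c\}$ is Zariski closed. This follows by realizing $V_c$ as the image under the proper projection $R_n^{d+1} \times \PP(R_c) \to R_n^{d+1}$ of the closed incidence variety $\{(a,[h]) : h \mid a_i \text{ for all } i\}$, which is a vector subbundle over $\PP(R_c)$. Within $V_c$, the condition that the $\bmu$-type be $\le \bmu$ in the order of Definition~\ref{orderdef} is closed, via the upper-semicontinuity of the Hilbert function of $R/\langle a_0,\dots,a_d\rangle$ (which is computable from the resolution \eqref{genexact}); the exact $\bmu$-type then cuts out a locally closed subset. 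Irreducibility of $\calP_{n,d}^\sbmu$ is immediate once we know it is the image of the irreducible source of $\Phi$, using Theorem~\ref{relprimedim} for the second factor.

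The dimension count is then straightforward: the one-dimensional fibers of $\Phi$ give
\begin{align*}
\dim \calP_{n,d}^\sbmu &= \dim R_c + \dim \calP_{n-c,d}^\sbmu - 1\\
&= (c+1) + \bigl[(d+1)(n-c+1) - {\textstyle\sum_{i>j}} \max(0,\mu_i-\mu_j-1)\bigr] - 1\\
&= d|\bmu| + d + n + 1 - {\textstyle\sum_{i>j}} \max(0,\mu_i-\mu_j-1),
\end{align*}
upon substituting $c = n - |\bmu|$. The main obstacle is the local-closedness claim: showing both that $V_c$ is closed and that the $\bmu$-type strata within it are genuinely locally closed (rather than merely constructible) requires the semicontinuity ingredient above, which is where the input from \cite{I1,I2} enters. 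Irreducibility and the dimension formula then follow formally from the free $\sfk^*$-action and Theorem~\ref{relprimedim}.
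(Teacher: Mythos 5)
Your approach is essentially the ``intuitive argument'' that the paper itself presents immediately after stating Theorem~\ref{gendim}: the authors describe exactly your multiplication map $(R_{n-|\sbmu|}\setminus\{0\}) \times \calP_{|\sbmu|,d}^\sbmu \to \calP_{n,d}^\sbmu$, note the one-dimensional fibers, and derive the dimension formula from Theorem~\ref{relprimedim}. Your arithmetic checks out and the $\sfk^*$-action argument for the fibers is correct. However, there is a serious circularity problem hiding here: the paper's actual proof in Appendix~\ref{proofs} goes in the \emph{opposite} direction. It proves Theorem~\ref{gendim} directly from the Grassmannian results of \cite{I2} (via Lemma~\ref{coverlem}, Lemma~\ref{mucomparisonlem}, Theorem~\ref{key78thm}, and Theorem~\ref{codthm}), and then \emph{deduces} the relatively prime case, Theorem~\ref{relprimedim}, by ``intersecting with the open set $\calP_{n,d} \subseteq \calCP_{n,d}$.'' In other words, Theorem~\ref{relprimedim} is never established independently; it is a corollary of Theorem~\ref{gendim}. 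Your proposal reduces Theorem~\ref{gendim} to Theorem~\ref{relprimedim}, so within the paper's own logical architecture the argument would be circular. To make your route work, you would first need a freestanding proof of the $|\bmu|=n$ case, which amounts to doing the Grassmannian/Hilbert-function work of \cite{I2} in that special case anyway.

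Beyond the circularity, your treatment of local closedness also compresses the genuine content. Closedness of $V_c$ is fine (modulo a small fix: over a non-algebraically-closed field one should take the union of images of incidence varieties in $R_n^{d+1} \times \PP(R_{c''})$ over all $c \le c'' \le n$, since the gcd need not have a factor of degree exactly $c$). But the statement that upper-semicontinuity of the Hilbert function makes the exact $\bmu$-type stratum locally closed rests on knowing that the partial order $\bmu' \le \bmu$ of Definition~\ref{orderdef} corresponds precisely to $H_{V'} \ge H_V$, and that there are only finitely many Hilbert functions that occur. These are exactly Lemmas~\ref{mucomparisonlem} and~\ref{HVmu}, which require real argument (the graph-comparison computation for $G_V$). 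You acknowledge this is ``where the input from \cite{I1,I2} enters,'' but once you are invoking that machinery you are already most of the way to the paper's direct proof, and the detour through the multiplication map is not buying you a shortcut. Irreducibility, likewise, is not free: you need to know $\calP_{|\sbmu|,d}^\sbmu$ is irreducible, which again is precisely Theorem~\ref{relprimedim} and hence part of what you are trying to prove.
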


We prove this in Appendix~\ref{proofs} using results from \cite{I1,I2}.
Here we give an intuitive argument to explain the formula for
$\dim(\calP_{n,d}^\sbmu)$. Multiplication gives a surjection
\[
(R_{n-|\sbmu|} \setminus \{0\}) \times \calP_{|\sbmu|,d}^\sbmu
\longrightarrow \calP_{n,d}^\sbmu
\]
that maps relatively prime $d$-tuples of degree $|\bmu|$ to $d$-tuples
of degree $n$ with a common factor $h$ of degree $n-|\bmu|$.  The
fibers of this map have dimension $1$ since
\[
(\lambda^{-1}h)(\lambda b_0,\dots,\lambda b_d) = h(b_0,\dots,b_n)
\ \text{for all}\  \lambda \in \sfk \setminus\{0\} 
\]
and $\gcd$'s are well-defined only up to multiplication by a nonzero
constant.  By Theorem~\ref{relprimedim}, it follows that
\[
\dim(\calP_{n,d}^\sbmu) = \big(n-|\bmu|+1\big) + \big((d+1) (|\bmu|+1)
- \sum_{i>j} \max(0,u_i-u_j-1)\big) -1. 
\]
This easily reduces to the formula given in Theorem~\ref{gendim}.\vskip 0.2cm

We can also compute the Zariski closure of $\calP_{n,d}^\sbmu$ in
$\calCP_{n,d}$. 

\begin{theorem}
\label{genclosure}
The Zariski closure of $\calP_{n,d}^\sbmu$ in $\calCP_{n,d}$ is
$\overline{\calP}_{n,d}^\sbmu = \bigcup_{\sbmu' \le \sbmu}
\calP_{n,d}^{\sbmu'}$, where the union is over all $d$-part partitions
$\bmu'$ with $d \le |\bmu'| \le n$ and $\bmu' \le \bmu$.
\end{theorem}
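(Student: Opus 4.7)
The plan is to prove the two set-theoretic inclusions separately. Set $U_\sbmu = \bigcup_{\sbmu' \le \sbmu,\ d \le |\sbmu'| \le n} \calP_{n,d}^{\sbmu'}$.

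For $\overline{\calP}_{n,d}^\sbmu \subseteq U_\sbmu$, the key tool is upper semicontinuity of the $\bmu$-type. Via the resolution in Proposition~\ref{muprop}, the partial order $\bmu' \le \bmu$ of Definition~\ref{orderdef} translates into a finite list of pointwise lower bounds on the Hilbert function of the syzygy module. Concretely, for each $m \ge 0$ it imposes a lower bound on $\dim_\sfk \ker(\varphi_m)$, where $\varphi_m \colon R_m^{d+1} \to R_{n+m}$ sends $(A_0,\dots,A_d)$ to $\sum_i A_i a_i$. Each such kernel dimension is lower semicontinuous in the coefficients of $(a_0,\dots,a_d)$, being the corank of a matrix whose entries are linear in those coefficients. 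Hence $U_\sbmu$ is cut out by finitely many closed conditions, so it is closed, and it contains $\calP_{n,d}^\sbmu$ trivially.

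For the reverse inclusion, I argue by induction on $n$ and show $\calP_{n,d}^{\sbmu'} \subseteq \overline{\calP}_{n,d}^\sbmu$ for each $\bmu' \le \bmu$ with $d \le |\bmu'| \le n$. The case $|\bmu'| = |\bmu| = n$ is exactly Theorem~\ref{relprimeclosure}. When $|\bmu| < n$, set $c = n - |\bmu|$ and consider the multiplication morphism $\Phi \colon (R_c \setminus \{0\}) \times \calP_{|\sbmu|,d} \to \calCP_{n,d}$, $(h, \mathbf b) \mapsto h\mathbf b$. By the inductive hypothesis applied in degree $|\bmu|<n$, the closure of $\calP_{|\sbmu|,d}^\sbmu$ in $\calCP_{|\sbmu|,d}$ contains every $\calP_{|\sbmu|,d}^{\sbmu''}$ with $\bmu'' \le \bmu$ and $d \le |\bmu''| \le |\bmu|$. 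By continuity of $\Phi$, each $\Phi\bigl((R_c \setminus \{0\}) \times \calP_{|\sbmu|,d}^{\sbmu''}\bigr)$ lies in $\overline{\calP}_{n,d}^\sbmu$. A typical element in this image is a product $h\,\tilde h\,(b_0'',\dots,b_d'')$ with $(b_i'') \in \calP_{|\sbmu''|,d}^{\sbmu''}$ relatively prime, $\deg h = c$, and $\deg \tilde h = |\bmu|-|\bmu''|$, so its gcd is the factorable polynomial $h\tilde h \in R_{n-|\sbmu''|}$. Since the multiplication map $R_c \times R_{|\sbmu|-|\sbmu''|} \to R_{n-|\sbmu''|}$ is dominant (every polynomial factors over $\overline{\sfk}$), these factorable gcds are Zariski dense in $R_{n-|\sbmu''|}$; combined with the irreducibility of $\calP_{n,d}^{\sbmu''}$ from Theorem~\ref{gendim}, the containment extends to the full stratum.

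The remaining case, $|\bmu| = n$ with $|\bmu'| < n$, is the main obstacle: the induction on $n$ provides no reduction and the multiplication morphism becomes trivial since $c = 0$. One must construct a one-parameter flat family in the relatively prime stratum $\calP_{n,d}^\sbmu$ whose limit in $\calCP_{n,d}$ acquires a nontrivial gcd and $\bmu$-type exactly $\bmu'$. A naive linear perturbation of $h'(b_0',\dots,b_d')$ does not suffice, since a generic direction promotes the $\bmu$-type to $\bmu_{\mathrm{max}}$ rather than preserving $\bmu$. The cleanest route, and presumably the one taken in the appendix, is to invoke the dictionary from \cite{I1,I2}: the $\bmu$-stratum $\calP_{n,d}^\sbmu$ corresponds to a stratum of graded ideals in $R = \sfk[s,t]$ with prescribed Hilbert function, and the closure of such a Hilbert-function stratum is classically the downward-closed set in the associated dominance order. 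Translating this closure result through Proposition~\ref{muprop} yields the missing inclusions $\calP_{n,d}^{\sbmu'} \subseteq \overline{\calP}_{n,d}^\sbmu$ in this last case and completes the proof.
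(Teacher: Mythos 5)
Your first inclusion, $\overline{\calP}_{n,d}^\sbmu \subseteq U_\sbmu$, is correct in spirit: by Lemma~\ref{mucomparisonlem} the condition $\bmu' \le \bmu$ is equivalent to $H_{V'} \ge H_V$, i.e., finitely many upper bounds on the ranks of the multiplication maps $R_\ell^{d+1} \to R_{n+\ell}$, and rank upper bounds are Zariski-closed. This half is fine.

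The reverse inclusion, however, has a genuine gap, and you already put your finger on it. Your case split runs into two problems. First, the subcase $|\bmu'| = |\bmu| = n$ is handled by citing Theorem~\ref{relprimeclosure}; but in the paper Theorem~\ref{relprimeclosure} is \emph{derived from} Theorem~\ref{genclosure} by intersecting with the open subset $\calP_{n,d} \subseteq \calCP_{n,d}$, so this citation is circular unless you supply an independent proof (and the only known proofs in the paper's bibliography, \cite{D} for $d=2$ and \cite{I1,I2} in general, are precisely the machinery you are trying to avoid). Second, and more seriously, the case $|\bmu| = n$ with $|\bmu'| < n$ — showing that a tuple with a nontrivial common factor is a degeneration of relatively prime tuples of the prescribed type — is left as a speculation. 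You ``presume'' the paper invokes the Iarrobino dictionary, and indeed it does: the paper passes through the bundle projection $\pi\colon \calCP_{n,d} \to \Grass(d+1,R_n)$ of Lemma~\ref{coverlem}, identifies $\Grass^\sbmu_{n,d}$ with $\mathrm{GA}_T(d+1,n)$ via the order-reversing bijection $\bmu \leftrightarrow T=H_V$ from Lemmas~\ref{mucomparisonlem} and \ref{HVmu}, and then cites the closure theorem $\overline{\mathrm{GA}_T}(d+1,n) = \bigcup_{T' \ge T} \mathrm{GA}_{T'}(d+1,n)$ from \cite[Thm.~2.32]{I2} (recorded here as Theorem~\ref{key78thm}). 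That single citation handles \emph{all} of $\bmu' \le \bmu$ at once, so no case split on $|\bmu'|$ versus $|\bmu|$ is needed. Your intermediate case, the multiplication morphism $\Phi$ with induction on $n$, is a sound-looking reduction, but it does no net work: it reduces the $|\bmu| < n$ case back to the $|\bmu| = n$ case in lower degree, which is exactly the case you cannot finish. In short, you have correctly identified the external theorem the argument must rest on, but you have not carried out the translation, and without it the hard direction of the closure statement remains unproved.
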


By definition, $\calP_{n,d}^\sbmu$ consists of parametrizations with a
common factor of degree $n-|\bmu|$.  Theorem~\ref{genclosure} tells us
that in $\calCP_{n,d}$, the difference  $\overline{\calP}_{n,d}^\sbmu
\setminus \calP_{n,d}^\sbmu$ consists of strata
$\overline{\calP}_{n,d}^{\sbmu'}$, where $\bmu' \le \bmu$.  Since
$\bmu' \le \bmu$ implies that $n-|\bmu'| \ge n-|\bmu|$, we see that
$\overline{\calP}_{n,d}^\sbmu$ may include strata with common factors
of larger degree.   

\begin{example}
In Example~\ref{ex63}, we saw that $\calP_{6,3}$ has a simple
stratification with three strata.  These and four other strata appear
in the more complex stratification of $\calCP_{6,3}$:
\[
\xymatrix@R=10pt@C=1pt@W=1pt{
& \hskip8pt(2,2,2)_{\scriptscriptstyle28} \ar@{-}[d] &\\ 
& \hskip8pt(1,2,3)_{\scriptscriptstyle27} \ar@{-}[ld] \ar@{-}[rd] &\\
\hskip13pt(1,2,2)_{\scriptscriptstyle25}\hskip-18pt \ar@{-}[rd] && 
\hskip-13pt(1,1,4)_{\scriptscriptstyle24} \ar@{-}[ld]\\
& \hskip8pt(1,1,3)_{\scriptscriptstyle23}\ar@{-}[d] &\\ 
& \hskip8pt(1,1,2)_{\scriptscriptstyle22}\ar@{-}[d] &\\ 
& \hskip8pt(1,1,1)_{\scriptscriptstyle19}&
}
\]
Here, we write $\calP_{6,3_{\mathrm{dim}}}^{(\mu_1,\mu_2,\mu_3)}$ as
$(\mu_1,\mu_2,\mu_3)_{\scriptscriptstyle\mathrm{dim}}$, similar to
Example~\ref{ex93}.

This diagram tells us that if we move around in
$\calP_{6,3}^{(1,2,3)}$ and reach its boundary in $\calCP_{6,3}$, then
with high probability we hit a point in \emph{either}
$\calP_{6,3}^{(1,2,2)}$ (acquire a common factor) or
$\calP_{6,3}^{(1,1,4)}$ (remain relatively prime).  These
possibilities are not equally likely since the former has codimension
$2$ in $\overline{\calP}_{6,3}^{(1,2,3)}$ while the latter has codimension $3$.
\end{example}

\section{Further Topics}
\label{further}

In this section we investigate non-proper parametrizations and look
more closely at the smallest and largest strata of $\calP_{n,d}$.  Section \ref{nonproper}
is new to this paper;  Section \ref{smallest} is a geometric interpretation of 
some results of \cite{I2}.

\subsection{Non-Proper Parametrizations}\label{nonproper}

In this section, we restrict our attention to $\bmu$-strata
$\calP_{n,d}^\sbmu$ with $|\bmu| = n$, i.e., $\bmu$ is a $d$-part
partition of $n$.  This means that all $(a_0,\dots,a_d) \in
\calP_{n,d}^\sbmu$ are relatively prime.  As in Section~\ref{higher},
we assume $n \ge d \ge 2$.

A parametrization $(a_0,\dots,a_d) : \PP^1 \to \PP^d$ is proper if it
is birational onto its image.  In general, let $k$ be number of points
in the preimage of a generic point in the image.  We say that the
parametrization has \emph{generic degree $k$}.  It is well-known that
if the $a_i \in R_n$ are relatively prime, then
\begin{equation}
\label{nkm}
n = km,
\end{equation}
where $k$ is the generic degree of the parametrization and $m$ is the
degree of the image curve $C \subseteq \PP^d$.  Thus a proper
parametrization has generic degree $1$ and parametrizes a curve of
degree $n$.

\begin{proposition}
\label{nonproperprop1}
Let $k > 1$ be an integer.  Then $\calP_{n,d}$ contains a
parametrization of generic degree $k$ if and only if $k \mid n$ and $n
\ge kd$.
\end{proposition}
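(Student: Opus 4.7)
My plan is to prove the two directions separately, using the identity $n=km$ from \eqref{nkm} as the starting point in each case.

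For the forward direction, assume $(a_0,\dots,a_d)\in\calP_{n,d}$ has generic degree $k$. Relation \eqref{nkm} immediately gives $n = km$ where $m = \deg(C)$, proving $k\mid n$. For the inequality, I would invoke the classical fact that a non-degenerate irreducible curve in $\PP^d$ has degree at least $d$ (this is standard: project to a general $\PP^1$ via $d$ hyperplanes, or equivalently note that $d+1$ linearly independent polynomials in $R_m$ force $m\ge d$). Since the linear independence assumption in $\calP_{n,d}$ ensures $C$ is non-degenerate, we get $m\ge d$ and hence $n=km\ge kd$.

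For the converse, suppose $k\mid n$ and $n\ge kd$, and set $m=n/k$. Then $m\ge d$, so $\calP_{m,d}$ is nonempty (for $m=d$ use the rational normal curve $(s^d,s^{d-1}t,\dots,t^d)$, which is proper; for $m>d$ a generic element of $\calP_{m,d}$ works). Pick a proper parametrization $(b_0,\dots,b_d)\in\calP_{m,d}$ of a non-degenerate degree $m$ curve $C\subseteq\PP^d$, and pick $p,q\in R_k$ that are coprime and linearly independent (e.g.\ $p=s^k$, $q=t^k$). The plan is to define
\[
a_i(s,t) = b_i\bigl(p(s,t),q(s,t)\bigr) \in R_n, \qquad i=0,\dots,d,
\]
corresponding to precomposing the parametrization $\PP^1\to C$ with the degree-$k$ map $\PP^1\to\PP^1$ given by $(p,q)$, and then verify $(a_0,\dots,a_d)\in\calP_{n,d}$ with generic degree $k$.

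The verification breaks into three checks, each fairly routine. Linear independence of the $a_i$ follows from algebraic independence of $p/q$ over $\sfk$: if $\sum c_i a_i=0$ in $R$, then $\sum c_i b_i(p/q,1)=0$ in $\sfk(s,t)$, forcing $\sum c_ib_i(x,1)=0$ identically and hence $c_i=0$ by linear independence of the $b_i$. For relative primeness, any common factor of $a_0,\dots,a_d$ would vanish at every $(s_0,t_0)\in\PP^1$ mapped by $(p,q)$ to a common zero of the $b_i$; but $\gcd(b_0,\dots,b_d)=1$, so there are no such common zeros, hence $\gcd(a_0,\dots,a_d)=1$. Generic degree $k$ is then immediate: the map factors as a proper parametrization $\PP^1\to C$ composed with the degree-$k$ map $(p,q):\PP^1\to\PP^1$, and the generic fiber has $k$ points. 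The main subtlety to handle carefully is relative primeness over a non-algebraically-closed $\sfk$; this is fine because common zeros should be interpreted over the algebraic closure, and the coprimality argument goes through there.
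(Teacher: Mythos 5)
Your forward direction is essentially the paper's own argument: $k\mid n$ from \eqref{nkm}, and $m\ge d$ because $d+1$ linearly independent polynomials of degree $m$ require $\dim R_m = m+1 \ge d+1$. For the converse you take a genuinely different route. The paper defers the converse to the appendix and derives it as a corollary of Theorem~\ref{nonproperthm}: it exhibits a $d$-part partition $\bmu = (k,\dots,k,n-kd+k)$ of $n$ divisible by $k$ and invokes the (nontrivially proved, inductive) nonemptiness part of that theorem. You instead give a direct, self-contained construction: pick a proper $(b_0,\dots,b_d)\in\calP_{m,d}$ and precompose with a degree-$k$ self-map $(p,q)$ of $\PP^1$, then verify linear independence, coprimality, and the degree count by hand. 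Your verifications are sound — linear independence because $p/q$ is transcendental over $\sfk$ (so $b\mapsto b(p,q)$ is injective on $R_m$), coprimality by passing to $\bar\sfk$ where $\gcd$ is preserved and noting that $p,q$ coprime forces $(p,q)$ to avoid $(0,0)$, and multiplicativity of degree under composition. Interestingly, the composition idea is exactly the engine inside the paper's proof of Theorem~\ref{nonproperthm} (the map $U\times W \to \calP^{\sbmu}_{n,d}$ via \eqref{abab}), so you have in effect inlined a single instance of that construction. Your approach buys elementariness and avoids dependence on the full inductive codimension theorem; the paper's route buys economy, since it is establishing Theorem~\ref{nonproperthm} anyway and then gets the converse for free.
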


\begin{proof} 
The restriction $k \mid n$ is obvious from \eqref{nkm}.  To understand
the inequality $n \ge kd$, recall our assumption that the $a_i$ are
linearly independent, i.e., the image curve $C$ does not lie in any
hyperplane of $\PP^d$.  But $C$ has degree $m = n/k$, and if $m < d$,
then the $d+1$ polynomials of degree $m$ parametrizing $C$ would have
to be linearly dependent, forcing $C$ to lie in a hyperplane.  Thus
$n/k = m \ge d$.  This proves one direction of the proposition; the
proof of the other direction will be deferred until
Section~\ref{nonproperproofs}.
\end{proof}

We next relate non-proper parametrizations to the
$\bmu$-stratification of $\calP_{n,d}$.

\begin{proposition}
\label{nonproperprop}
Suppose we have a $\bmu$-stratum $\calP_{n,d}^\sbmu$ with $\bmu =
(\mu_1,\dots,\mu_d)$ and let $k > 1$ be an integer.  Then
$\calP_{n,d}^\sbmu$ contains parametrizations of generic degree $k$ if
and only if $k \mid \mu_i$ for all $i$.
\end{proposition}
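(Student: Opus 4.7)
My plan is to reduce the proposition to a substitution rule for $\bmu$-types, with the bridge provided by a L\"uroth-type factorization of non-proper parametrizations. Specifically, every $F = (a_0, \dots, a_d) \in \calP_{n, d}$ of generic degree $k > 1$ admits a factorization $a_i = b_i(p, q)$ in which $G = (b_0, \dots, b_d) \in \calP_{m, d}$ is proper of degree $m = n/k$ and $\phi = (p, q) \colon \PP^1 \to \PP^1$ is given by coprime homogeneous $p, q \in R_k$. Conversely, given any such $G$ and $\phi$, the substituted tuple $(b_i(p, q))$ lies in $\calP_{n, d}$ and has generic degree $k$: relative primality is preserved because the $b_i$ have no common projective zero and $\phi$ is surjective, and linear independence is preserved because the two tuples parametrize the same image curve.

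The main lemma driving the argument is the substitution rule: \emph{if $G \in \calP_{m, d}^{\sbmu'}$ with $\bmu' = (\mu_1', \dots, \mu_d')$, then $(b_0(p, q), \dots, b_d(p, q)) \in \calP_{n, d}^{k\sbmu'}$, where $k\bmu' = (k\mu_1', \dots, k\mu_d')$.} Let $M'$ be the $(d+1) \times d$ matrix whose columns form a $\bmu$-basis for $G$; by Proposition \ref{muprop} the columns of $M'$ have degrees $\mu_i'$ and the maximal minors of $M'$ equal $(b_0, \dots, b_d)$ up to signs. Substituting $u = p,\ v = q$ yields a matrix $M = M'(p, q)$ whose columns have degrees $k\mu_i'$ (summing to $km = n$), satisfy $(a_0, \dots, a_d) \cdot M = 0$, and whose maximal minors equal $b_i(p, q) = a_i$ up to the same signs. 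To see that the columns of $M$ genuinely form a $\bmu$-basis for $(a_i)$, let $M^{\mathrm{syz}}$ be a true $\bmu$-basis matrix for $(a_i)$ and write $M = M^{\mathrm{syz}} N$ for some $d \times d$ graded matrix $N$; the Cauchy--Binet identity $\mathrm{minor}_i(M) = \det(N) \cdot \mathrm{minor}_i(M^{\mathrm{syz}})$ together with both sides equaling $\pm a_i$ forces $\det N \in \sfk^*$, so $N \in GL_d(R)$. Hence the columns of $M$ and $M^{\mathrm{syz}}$ generate the same free syzygy module, and uniqueness of the graded minimal resolution forces $\{k\mu_1', \dots, k\mu_d'\} = \{\mu_1, \dots, \mu_d\}$ as multisets.

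With the substitution rule in hand, both directions are immediate. For ($\Rightarrow$), if $F \in \calP_{n, d}^{\sbmu}$ has generic degree $k$, the factorization gives a $\bmu'$ with $\bmu = k\bmu'$, so $k \mid \mu_i$ for every $i$. For ($\Leftarrow$), if $k \mid \mu_i$ for all $i$, set $\bmu' = (\mu_1/k, \dots, \mu_d/k)$, a $d$-part partition of $m = n/k$ with $m \ge d$ (since each $\mu_i/k \ge 1$); Theorem \ref{relprimedim} supplies a $G \in \calP_{m, d}^{\sbmu'}$, and for any coprime $p, q \in R_k$ the tuple $(b_i(p, q))$ realizes a point of $\calP_{n, d}^{\sbmu}$ of generic degree $k$. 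The principal obstacle is the converse Hilbert--Burch step in the substitution rule: one must rule out the possibility that $M = M'(p, q)$ provides only a strict sub-basis of the syzygy module, or has the ``wrong'' degree pattern. The Cauchy--Binet comparison with a genuine $\bmu$-basis matrix $M^{\mathrm{syz}}$ isolates this to the claim $\det N \in \sfk^*$, which then follows directly from the matched minors.
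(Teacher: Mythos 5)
Your forward direction is essentially the paper's argument: L\"uroth factorization $a_i = b_i(\alpha,\beta)$, substitution of $(\alpha,\beta)$ into a $\bmu$-basis of $(b_i)$, and the Cauchy--Binet comparison $M = M^{\mathrm{syz}}N$ forcing $\det N \in \sfk^\times$ so that the substituted columns are a genuine $\bmu$-basis. That part is correct and matches the paper.

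The converse direction has a genuine gap. You take $\bmu' = (\mu_1/k,\dots,\mu_d/k)$, invoke Theorem~\ref{relprimedim} to produce \emph{some} $G=(b_0,\dots,b_d)\in\calP_{m,d}^{\sbmu'}$, and claim that $(b_i(p,q))$ has generic degree $k$ for any coprime $p,q\in R_k$. But this conclusion requires $G$ to be \emph{proper}: if $G$ itself has generic degree $k'>1$, then the composite has generic degree $kk'\neq k$ (it still lands in $\calP_{n,d}^{\sbmu}$ by your substitution rule, but with the wrong generic degree). Theorem~\ref{relprimedim} only tells you the stratum $\calP_{m,d}^{\sbmu'}$ is nonempty and irreducible of a known dimension; it says nothing about whether any of its points are proper. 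When $\gcd(\mu_1',\dots,\mu_d')=1$ you could appeal to the forward direction you just proved (every parametrization in that stratum is automatically proper), but when $\gcd(\mu_i')>1$ you need a further argument.

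This is exactly why the paper does not prove the converse of Proposition~\ref{nonproperprop} directly. Instead it proves Theorem~\ref{nonproperthm} by complete induction on $n$: the inductive hypothesis supplies part~(2) of that theorem at level $m=n/k<n$, namely that a generic element of $\calP_{m,d}^{\tilde\sbmu}$ is proper, and hence that the set $U$ of proper parametrizations in $\calP_{m,d}^{\tilde\sbmu}$ is nonempty. The converse of Proposition~\ref{nonproperprop} is then read off from Theorem~\ref{nonproperthm}. To repair your proof you would either need to cite Theorem~\ref{nonproperthm}(2) (and be careful about the order of proof, since its argument in turn uses the forward direction of Proposition~\ref{nonproperprop}), or reproduce the dimension/induction argument that shows the non-proper locus in $\calP_{m,d}^{\sbmu'}$ has positive codimension, hence cannot exhaust the irreducible stratum.
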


The proof will be given in Section~\ref{nonproperproofs}.
Proposition~\ref{nonproperprop} has the following useful corollary.

\begin{corollary}
\label{nonpropercor}
Given $\bmu = (\mu_1,\dots,\mu_d)$, the $\bmu$-stratum
$\calP_{n,d}^\sbmu$ consists entirely of proper parametrizations if
and only if $\gcd(\mu_1,\dots,\mu_d) = 1$.
\end{corollary}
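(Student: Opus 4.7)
The plan is to derive Corollary \ref{nonpropercor} as an immediate logical consequence of Proposition \ref{nonproperprop}. Since properness is equivalent to having generic degree $1$, the stratum $\calP_{n,d}^\sbmu$ consists entirely of proper parametrizations if and only if it contains no parametrization of generic degree $k > 1$. Thus the corollary is simply the contrapositive reformulation of Proposition \ref{nonproperprop} quantified over all integers $k > 1$.

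First I would argue the forward direction. Suppose $\gcd(\mu_1,\dots,\mu_d) = g > 1$. Then $k := g$ is an integer strictly greater than $1$ that divides every $\mu_i$, so Proposition \ref{nonproperprop} produces a parametrization in $\calP_{n,d}^\sbmu$ of generic degree $k > 1$, which is by definition non-proper. Hence if the stratum consists entirely of proper parametrizations, we must have $\gcd(\mu_1,\dots,\mu_d) = 1$.

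For the converse, suppose $\calP_{n,d}^\sbmu$ contains a non-proper parametrization, say of generic degree $k > 1$. Applying Proposition \ref{nonproperprop} in the other direction, we get $k \mid \mu_i$ for every $i$, so $\gcd(\mu_1,\dots,\mu_d) \ge k > 1$. Equivalently, if $\gcd(\mu_1,\dots,\mu_d) = 1$, then no integer $k > 1$ can simultaneously divide all $\mu_i$, so by Proposition \ref{nonproperprop} there is no parametrization in $\calP_{n,d}^\sbmu$ of generic degree greater than $1$; every parametrization in the stratum is proper.

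Since both implications have been reduced to a single invocation of Proposition \ref{nonproperprop}, there is no genuine obstacle: all the geometric and combinatorial content lives in that proposition. The only thing to check is the quantifier shuffling above, which is routine. I would present the proof as a short paragraph combining the two directions, perhaps noting that the divisibility condition $k \mid \mu_i$ for all $i$ with $k > 1$ admits a solution precisely when $\gcd(\mu_1,\dots,\mu_d) > 1$.
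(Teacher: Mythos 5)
Your proposal is correct and matches the paper's (implicit) reasoning: the paper states the corollary directly after Proposition~\ref{nonproperprop} without further proof, precisely because it is the routine quantifier-and-gcd reformulation you spell out. Nothing is missing.
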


\begin{example}
\label{ex123}
Suppose that $n = 12$ and $d = 3$.  The integers $k > 1$ dividing $12$
are $k = 2,3,4,6,12$.  By Proposition~\ref{nonproperprop1},
$\calP_{12,3}$ has no parametrizations of generic degree $k = 6$ or
$12$ since $d = 3$, while $k = 2$, $3$ and $4$ can occur.

One can compute that $\calP_{12,3}$ decomposes into $12$ $\bmu$-strata,
eight of which satisfy the gcd criterion of Corollary~\ref{nonpropercor}
and hence have no non-proper parametrizations.  For the remaining four
$\bmu$-strata, we have non-proper parametrizations of the following types:
\begin{itemize}
\item Generic degree $4$ occurs in
  $\calP_{12,3}^{(4,4,4)}$. 
\item Generic degree $3$ occurs  in
  $\calP_{12,3}^{(3,3,6)}$. 
\item Generic degree $2$ occurs  in
  $\calP_{12,3}^{(4,4,4)}$, $\calP_{12,3}^{(2,4,6)}$, and
  $\calP_{12,3}^{(2,2,8)}$.  
\end{itemize}
\end{example}

One expects non-proper parametrizations to be rare.  Our next task is
to quantify this intuition by computing the size of the generic degree
$k$ locus in each $\bmu$-stratum.  When $\calP_{n,d}^\sbmu$ contains a
parametrization of generic degree $k > 1$,
Proposition~\ref{nonproperprop} implies that its $\bmu$-type can be
written as $\bmu = k(\tilde\mu_1,\dots,\tilde\mu_d)$, i.e., $\bmu$ is
divisible by $k$.  This implies $k \mid n$ since $n =
k\tilde\mu_1+\cdots+k\tilde\mu_d$.

\begin{theorem}
\label{nonproperthm}
Assume that $\bmu$ is divisible by $k > 1$.  Then the parametrizations
in $\calP_{n,d}^{\sbmu}$ of generic degree $k$ form a nonempty
constructible subset of $\calP_{n,d}^{\sbmu}$ with irreducible Zariski
closure of codimension
\begin{equation} \label{codnonpropereq}
(k-1)(m(d+1) - S - 2),\quad 
S = \sum_{i > j} \max(0,\tilde\mu_i - \tilde\mu_j),
\end{equation}
where $m = n/k$ and $\bmu = k(\tilde\mu_1,\dots,\tilde\mu_d)$. Furthermore: 
\begin{enumerate}
\item The codimension is at least $(k-1)(d(d-1)+2m-2)$ and is always positive.
\item A generic parametrization in $\calP_{n,d}^{\sbmu}$ is proper.
\end{enumerate}
\end{theorem}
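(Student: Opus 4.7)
By Proposition~\ref{nonproperprop}, the generic-degree-$k$ locus of $\calP_{n,d}^{\sbmu}$ is nonempty precisely when $k\mid\mu_i$ for all $i$, which is our hypothesis.  Write $\bmu=k\tilde\bmu$ with $\tilde\bmu=(\tilde\mu_1,\dots,\tilde\mu_d)$ a $d$-part partition of $m=n/k$.  The strategy is to parametrize this locus by the composition morphism
\[
\Phi\colon \calP_{m,d}^{\tilde\sbmu}\times \mathrm{Rat}_k \longrightarrow \calP_{n,d}^{\sbmu}, \qquad (G,\phi)\longmapsto G\circ\phi,
\]
where $\calP_{m,d}^{\tilde\sbmu}$ is the $\tilde\bmu$-stratum, $\mathrm{Rat}_k=\{(p,q)\in R_k^{2}:\gcd(p,q)=1\}$ is Zariski open in $R_k^{2}$ of dimension $2(k+1)$, and $G\circ\phi=(b_0(p,q),\dots,b_d(p,q))$.

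First I would verify that $\Phi$ is well-defined with image exactly the generic-degree-$k$ locus.  If $A$ is a $(d+1)\times d$ $\bmu$-basis matrix for $G$ with columns of degrees $\tilde\mu_i$, then $A\circ\phi$ is a $(d+1)\times d$ matrix whose columns have degrees $k\tilde\mu_i$ and whose maximal minors are $\pm b_i(p,q)=\pm a_i$, since minors commute with polynomial substitution.  Invoking the Hilbert--Burch theorem on the codimension-two ideal $\langle a_0,\dots,a_d\rangle\subseteq R$ then shows that $A\circ\phi$ furnishes a minimal free resolution, so its columns form a $\bmu$-basis of $F=G\circ\phi$ of type exactly $k\tilde\bmu=\bmu$.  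Surjectivity onto the degree-$k$ locus follows from the standard factorization of $F$ through the normalization of its image curve $C\subset\PP^d$.  Since $\calP_{m,d}^{\tilde\sbmu}$ is irreducible by Theorem~\ref{relprimedim} and $\mathrm{Rat}_k$ is irreducible, the image is irreducible, and constructible by Chevalley's theorem.

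Next I would compute the fibers of $\Phi$.  The group $\mathrm{GL}_2$ acts on the domain via $\psi\cdot(G,\phi)=(G\circ\psi^{-1},\psi\circ\phi)$, preserves $\Phi$, and acts freely on a dense open set.  Uniqueness (up to $\mathrm{PGL}_2$) of the factorization through the normalization of $C$, combined with the observation that independent rescalings of $\phi$ by $\lambda$ and of $G$ by $\alpha$ preserve $F$ only under the constraint $\alpha\lambda^m=1$ (so are already absorbed into the center of $\mathrm{GL}_2$), shows the generic fiber of $\Phi$ is a single $\mathrm{GL}_2$-orbit of dimension $4$.  Hence, using Theorem~\ref{relprimedim} for both stratum dimensions together with the identity
\[
\textstyle \sum_{i>j}\max(0,k\tilde\mu_i-k\tilde\mu_j-1)-\sum_{i>j}\max(0,\tilde\mu_i-\tilde\mu_j-1)=(k-1)S,
\]
a short subtraction gives $\mathrm{codim}=(k-1)(m(d+1)-S-2)$, which is \eqref{codnonpropereq}.

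For (1), I would maximize $S=\sum_{i>j}(\tilde\mu_i-\tilde\mu_j)$ over $d$-part partitions of $m$: the maximum $(d-1)(m-d)$ is attained at $\tilde\bmu=(1,\dots,1,m-d+1)$, yielding $\mathrm{codim}\ge (k-1)(2m+d(d-1)-2)>0$ since $d\ge 2$ and $m\ge d$.  Part (2) follows because the non-proper locus is the finite union, over $k>1$ dividing $\gcd(\mu_1,\dots,\mu_d)$, of these positive-codimension subvarieties.  The main obstacle I expect is the Hilbert--Burch step in paragraph two: one must argue that $A\circ\phi$ provides a \emph{minimal} free resolution of $\langle a_0,\dots,a_d\rangle$, so that the pulled-back syzygies generate the full syzygy module of $F$ rather than merely sitting inside it---if not, the $\bmu$-type of $F$ could in principle drop strictly below $k\tilde\bmu$.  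A secondary concern is the careful bookkeeping of the $\mathrm{GL}_2$-action, to confirm that the $3$-dimensional $\mathrm{PGL}_2$-ambiguity of the intermediate $\PP^1$ and the single effective scalar-rescaling parameter combine into exactly the $4$-dimensional group $\mathrm{GL}_2$.
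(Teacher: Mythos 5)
Your approach matches the paper's in outline: parametrize the generic-degree-$k$ locus by composition $G\circ\phi$, compute fiber dimension $4$ from a $\mathrm{GL}_2$-action, and subtract dimensions using Theorem~\ref{relprimedim}. The closing formula, the maximization of $S$, and the finite-union argument for (2) all agree with the paper. However, there is a genuine gap in the middle of your argument.

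You define $\Phi$ on all of $\calP_{m,d}^{\tilde\sbmu}\times\mathrm{Rat}_k$ and assert that its image is ``exactly the generic-degree-$k$ locus.'' That is not true as stated: if $G\in\calP_{m,d}^{\tilde\sbmu}$ is itself non-proper of generic degree $\ell>1$, then $G\circ\phi$ has generic degree $k\ell>k$, so the image of $\Phi$ is a union of generic-degree-$k\ell$ loci over all $\ell\ge1$. To identify the generic-degree-$k$ locus you must restrict the first factor to the open subset $U\subseteq\calP_{m,d}^{\tilde\sbmu}$ of \emph{proper} parametrizations, as the paper does. The dimension count then requires $\dim(U)=\dim(\calP_{m,d}^{\tilde\sbmu})$, i.e., that $U$ is dense. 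But this is precisely claim (2) of the theorem being proved, applied at the smaller degree $m<n$, so you cannot simply invoke it; without an argument it risks circularity (one could instead try Corollary~\ref{nonpropercor}, but that only applies when $\gcd(\tilde\mu_i)=1$). The paper resolves this by running a complete induction on $n$, with base case $n=d$ (where $\bmu=(1,\dots,1)$ has no divisor $k>1$), so that the inductive hypothesis guarantees $U$ is nonempty, constructible, and dense in $\calP_{m,d}^{\tilde\sbmu}$. You should add this induction explicitly; otherwise the step $\dim(U\times W)=\dim(\calP_{m,d}^{\tilde\sbmu})+2(k+1)$ is unjustified. Your secondary worry about the Hilbert--Burch/minimality step, by contrast, is easily handled exactly as you suspect (and as the paper does in the proof of Proposition~\ref{nonproperprop}): if $A'$ is the composed syzygy matrix and $A$ a $\bmu$-basis matrix, both have the same maximal minors up to sign, so $A'=AQ$ with $\det Q=1$, forcing $Q$ to be a scalar unit.
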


The proof of this theorem will be given in
Section~\ref{nonproperproofs}.  Here is a sketch of some of the ideas
involved.  Using L\"uroth's Theorem, we will show that a
parametrization $F : \PP^1 \to \PP^d$ of generic degree $k$ is a
composition
\[
\PP^1 \stackrel{G}{\longrightarrow} \PP^1
\stackrel{H}{\longrightarrow} \PP^d,
\]
where $G$ is defined by $(\alpha(s,t),\beta(s,t)) \in R_k \times R_k$
of degree $k$ and $H$ is a parametrization in
$\calP_{m,d}^{\tilde\sbmu}$ of generic degree $1$ for $\tilde\bmu =
(\tilde\mu_1,\dots,\tilde\mu_d)$.  It will follow that composition
gives a map
\[
\varphi: \calP_{m,d}^{\tilde\sbmu} \times R_k \times R_k \longrightarrow
\calP_{n,d}^{\sbmu} 
\]
(we have to shrink the domain slightly to make this work) whose image
consists of parametrizations of generic degree $k$.  The
nonempty fibers of this map have dimension $4$ coming from a natural
action of $\mathrm{GL}(2,\sfk)$.  Hence the generic degree $k$ locus
has dimension
\[
\dim(\calP_{m,d}^{\tilde\sbmu}) + 2(k+1) - 4 = (d+1)(m+1) -
\sum_{i>j}\max(0,\tilde\mu_i-\tilde\mu_j-1) +2(k-1).
\]
where we have used Theorem~\ref{relprimedim}.  The codimension formula
in Theorem~\ref{nonproperthm} follows by combining this with the
corresponding formula for $\dim(\calP_{n,d}^{\sbmu})$.  Full details
will be provided in Section~\ref{nonproperproofs}.

\begin{example}
\label{ex93nonproper}
In Example~\ref{ex93}, we saw that the maximum $\bmu$-stratum of
$\calP_{9,3}$ in the order of Definition \ref{orderdef} is $\calP_{9,3}^{(3,3,3)}$ of dimension $40$ and the
minimum is $\calP_{9,3}^{(1,1,7)}$ of dimension $20$.  The only
non-proper parametrizations have generic degree $3$ and lie in
$\calP_{9,3}^{(3,3,3)}$.  Since $\tilde\bmu = (1,1,1)$ and $m = n/k =
3$ in this case, Theorem~\ref{nonproperthm} implies that the generic
degree $3$ locus has codimension
\[
(k-1)(m(d+1)-S-2) = (3-1)(3\cdot(3+1) - 0 - 2) = 2\cdot 10 = 20
\]
in $\calP_{9,3}^{(3,3,3)}$.  Hence non-proper parametrizations really
are rare!  Note also that
\[
20 = (3-1)(3(3-1)+2\cdot3-2) = (k-1)(d(d-1)+2m-2)
\]
since $k = d = m = 3$.  This shows that the lower bound in
Theorem~\ref{nonproperthm}(1) is sharp.
\end{example}
\begin{example}\label{12,3nonproperex}
In Example \ref{ex123} we noted that non-proper parametrizations with
generic degrees $k=4,3$ and $2$ occur in $\mathcal P_{12,3}$.  We give
the respective codimensions using the formula \eqref{codnonpropereq}
of Theorem \ref{nonproperthm}:
\begin{itemize}
\item Generic degree $4$ occurs in $\calP_{12,3}^{(4,4,4)}$, which has
  dimension $52$.  Here, we have $m=3$ and $S=0$, so that the non-proper
  codimension is $(4-1)(3\cdot 4-2)=30$.  In other words, the non-proper
  parametrizations have dimension $22$.
\item Generic degree $3$ occurs in $\calP_{12,3}^{(3,3,6)}$, which has
  dimension $48$.  Here, $m=4$ and $S=2$ so the non-proper codimension
  is $(3-1)(4\cdot 4-2-2)=24$.  Hence the non-proper locus has dimension
  $24$.
\item Generic degree $2$ occurs in $\calP_{12,3}^{(4,4,4)}$,
  $\calP_{12,3}^{(2,4,6)}$, and $\calP_{12,3}^{(2,2,8)}$.  A similar
  calculation gives respective non-proper codimensions $22$, $18$, and
  $18$ in $\bmu$-strata of dimensions $52$, $47$, and $42$.  So these
  non-proper strata have dimensions $30$, $29$ and $24$, respectively.
  Again, the non-proper loci have very high codimensions.
\end{itemize}

\end{example}

\subsection{The Smallest Stratum and Rational Normal Scrolls}
\label{smallest}

In this section we describe a further stratification of the
smallest $\bmu$-stratum $\bmu_{\mathrm{min}} = (1,\dots,1,n-d+1)$ in
the relatively prime case, assuming $n \ge d+1$.  The stratification
will involve finding which rational normal scroll the curve lies on.
We begin with an example.

\begin{example}
\label{WJGexample}
Rational curves in $\PP^3$ with $\bmu = (1,1,n-2)$ were studied in
\cite{WJG}.  Corollary 6.8 of that paper uses $\bmu$-bases to show
that when $n \ge 4$, such curves are either smooth or have a unique
singular point of multiplicity $n-2$.

This result can be explained using Section 2 of the paper \cite{KPU},
which considers $\bmu_{\mathrm{min}} = (1,\dots,1,n-d+1)$ from a
commutative algebra point of view.  When $\bmu = (1,1,n-2)$, the
results of \cite{KPU} imply that after a suitable change of
coordinates in $\PP^3$, the $4\times 3$ matrix $A$ from
\eqref{genexact} can be assumed to be either
\begin{equation}
\label{KPU1}
A = \begin{pmatrix} s & 0 & r_0\\ t & 0 & r_1\\ 0 & s &
  r_2\\ 0 & t & r_3\end{pmatrix},\quad \deg(r_i) = n-2, 
\end{equation}
or 
\begin{equation}
\label{KPU2}
A = \begin{pmatrix} s & 0 & r_0\\ t & s & r_1\\ 0 & t &
  r_2\\ 0 & 0& r_3\end{pmatrix},\quad \deg(r_i) = n-2. 
\end{equation}
(See \cite[Prop.\ 2.1]{KPU}.) In either case, the first column of $A$
gives the moving plane $sx_0 + tx_1 = 0$ which contains the line $L_1
= \{(0,0,a,b) \mid (a,b) \in \PP^1\}$.  In the terminology of
\cite{WJG}, this is an \emph{axial moving plane} with $L_1$ as axis.
The second column of $A$ also gives an axial moving plane with axis
$L_2$, the difference being that in \eqref{KPU1}, the axes $L_1$ and $L_2$ are
disjoint (and the curve is smooth), while in \eqref{KPU2}, the axes
meet at $(0,0,0,1)$ (and the curve is singular at this point).   

Let us look at \eqref{KPU1} more closely.  Recall that the
parametrization $(a_0,a_1,a_2,a_3)$ is given by the signed maximal
minors of $A$.  If we set $h_1 = sr_3-tr_2$ and $h_2 = sr_1-tr_0$,
then one easily computes that
\begin{equation}
\label{KPUparam1}
a_0 = t\hskip.5pt h_1,\ a_1 = -s\hskip.5pt h_1,\ a_2 = -t\hskip.5pt
h_2,\ a_3 = s\hskip.5pt h_2. 
\end{equation}
Note that $h_1$ and $h_2$ have degree $n-1$.  

A first consequence of \eqref{KPUparam1} is that $a_0a_3 = a_1a_2$, so
that the curve lies on the smooth quadric surface $x_0x_3 = x_1x_2$ in
$\PP^3$.  We will see that this surface is a particularly simple
example of a \emph{rational normal scroll}.  In terms of ideals,
\eqref{KPUparam1} implies that
\[
I = \langle a_0,a_1,a_2,a_3\rangle = \langle
t\hskip.5pt h_1,s\hskip.5pt h_1,t\hskip.5pt h_2,s\hskip.5pt h_2\rangle
= \langle h_1,h_2\rangle \cap 
{\textstyle\bigoplus_{m = n}^\infty} R_m. 
\]
Here, recall that $R = \sfk[s,t]$, so that the above equation tells us
that $I$ consists of all elements of degree $m \ge n$ in the
simpler ideal $\langle h_1,h_2 \rangle$.   In the terminology of
\cite{I2}, $\langle h_1,h_2\rangle$ is the \emph{ancestor ideal} of
$I$. 

For \eqref{KPU2}, we have a similar situation.  Let $(a_0,a_1,a_2,a_3)$
be the parametrization coming from \eqref{KPU2} and set $h_1 = r_3$ and
$h_2 = a_3$.  Taking the first three maximal minors of $A$, we have 
\begin{equation}
\label{KPUparam2}
a_0 = t^2\hskip.5pt h_1,\ a_1 = -st\hskip.5pt h_1,\ a_2 =
s^2\hskip.5pt h_1,\ a_3 = h_2. 
\end{equation}
Here $h_1$ has degree $n-2$ and $h_2$ has degree $n$.   

{}From \eqref{KPUparam2} we see that $a_0a_2 = a_1^2$, so that the
curve lies on the singular quadric surface $x_0x_2 = x_1^2$ in
$\PP^3$.  This surface is another example of a rational normal scroll.
In terms of ideals, \eqref{KPUparam2} implies that
\[
I = \langle a_0,a_1,a_2,a_3\rangle = \langle
t^2\hskip.5pt h_1,st\hskip.5pt h_1,s^2\hskip.5pt h_1,h_2\rangle =
\langle h_1,h_2\rangle \cap 
{\textstyle\bigoplus_{m = n}^\infty} R_m. 
\]
Again, $\langle h_1,h_2\rangle$ is the ancestor ideal of $I$.
\end{example}

This example shows that the $\bmu$-stratum for $(1,1,n-2)$ decomposes
into two parts corresponding to \eqref{KPU1} and \eqref{KPU2}, each
of which has a rational normal scroll and an ancestor ideal.  There
is some rich geometry and algebra going on here.   

In general, the \emph{ancestor ideal} of $I = \langle
a_0,\dots,a_d\rangle\subseteq R$ is the largest homogeneous ideal of
$R$ that equals $I$ in degrees $n$ and higher (remember that
$a_0,\dots,a_d \in R_n$).  Here is a result from \cite{I2}, whose
proof we defer until Section~\ref{smallestproofs}.

\begin{theorem}
\label{ancestor1}
Let $I = \langle a_0,\dots,a_d\rangle\subseteq R$ have $\bmu$-type
$\bmu_{\mathrm{min}} = (1,\dots,1,n-d+1)$, $n \ge d+1$.  Then the
ancestor ideal of $I$ is equal to $\langle h_1,h_2\rangle$, where
$h_1,h_2$ are relatively prime and satisfy
\[
I_n = R_{n-\deg(h_1)}\cdot h_1 \oplus R_{n-\deg(h_2)} \cdot h_2.
\]
\end{theorem}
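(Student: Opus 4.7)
The plan is to extract $h_1, h_2$ explicitly from the $\bmu$-basis of $I$ and then identify $\langle h_1, h_2\rangle$ with the ancestor ideal via Gorenstein duality. Since $\bmu_{\mathrm{min}} = (1, \ldots, 1, n - d + 1)$, the $(d+1) \times d$ matrix $A$ from \eqref{genexact} splits as $A = [M \mid v]$, where $M$ is a $(d+1) \times (d-1)$ matrix of linear forms (the $d-1$ linear syzygies) and $v \in R_{n-d+1}^{d+1}$. The strategy has three steps: put $M$ in normal form via constant row and column operations (which preserve $I$ up to invertible rescaling of the $a_i$); read off $h_1, h_2$ from Laplace expansion along $v$; and promote the identity $I_n = \langle h_1, h_2\rangle_n$ to the full ancestor ideal identification via Gorenstein duality.

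For the first step, view $M = sM_1 + tM_2$ as a matrix pencil over $\sfk$ and apply Kronecker--Weierstrass normal form. One checks that $M$ has full column rank $d - 1$ at every $(s_0, t_0) \neq 0$: otherwise a nonzero constant combination $c_1 p_1 + \cdots + c_{d-1} p_{d-1}$ of the linear syzygies would vanish at $(s_0, t_0)$ and hence equal $(t_0 s - s_0 t) \cdot c'$ for some $c' \in \sfk^{d+1}\setminus\{0\}$, giving the nontrivial constant syzygy $\sum c'_i a_i = 0$ and contradicting linear independence of the $a_i$. This rules out Jordan and right-null blocks, leaving only L-blocks; since $(d+1) - (d-1) = 2$, there are exactly two, of sizes $(k_1+1) \times k_1$ and $(k_2+1) \times k_2$ with $k_1 + k_2 = d - 1$. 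This generalizes the two normal forms \eqref{KPU1}--\eqref{KPU2} of \cite[Prop.~2.1]{KPU} from $d = 3$ to arbitrary $d$.

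For the second step, with $M$ block-diagonal, Laplace-expand $a_i = \pm\det(A_{\hat i})$ along $v$. The $(d-1)\times(d-1)$ minor of $M$ obtained by deleting rows $i, j$ vanishes when $i, j$ lie in the same block (block-triangularity with a non-square block forces rank deficiency) and factors as $\pm s^{k_1 - a} t^a \cdot s^{k_2 - b} t^b$ when they lie in different blocks, since the maximal minors of an L-block of size $(k+1) \times k$ are, up to sign, the degree-$k$ monomials in $s, t$. Collecting terms yields $a_i = \pm s^{k_1 - i} t^i \cdot h_2$ for $i$ in the first block and $a_i = \pm s^{k_2 - i'} t^{i'} \cdot h_1$ for $i$ in the second, where $h_1 \in R_{n - k_2}$ and $h_2 \in R_{n - k_1}$ are explicit alternating sums of the entries of $v$ against the L-block minors. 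Any common factor of $h_1, h_2$ would divide every $a_i$, so $\gcd(h_1, h_2) = 1$ follows from $\gcd(a_0, \ldots, a_d) = 1$. The decomposition $I_n = R_{n-\deg h_2}\, h_2 \oplus R_{n-\deg h_1}\, h_1$ follows immediately, with directness from coprimality and a dimension match against $\dim I_n = d + 1$.

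For the third step, $\langle h_1, h_2 \rangle_m = R_{m-n} \cdot I_n = R_{m - \deg h_2} h_2 + R_{m - \deg h_1} h_1 \subseteq I_m \subseteq \langle h_1, h_2 \rangle_m$ for $m \geq n$, so $\langle h_1, h_2 \rangle \subseteq \bar I$. For the reverse inclusion, take $f \in R_m$ with $m < n$ and $f \cdot R_{n-m} \subseteq I_n$. Coprimality makes $h_1, h_2$ a regular sequence in the two-dimensional ring $R = \sfk[s, t]$, so $A := R/\langle h_1, h_2\rangle$ is standard graded Artinian Gorenstein of socle degree $\sigma = \deg h_1 + \deg h_2 - 2 = 2n - d - 1$, and multiplication $A_a \cdot A_b \to A_{a+b}$ is surjective for all $a, b \geq 0$. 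The hypothesis reads $\bar f \cdot A_{n-m} = 0$ in $A_n$; multiplying by $A_{\sigma - n} = A_{n - d - 1}$ (nonnegative by $n \geq d + 1$) and using surjectivity of $A_{n-m} \cdot A_{n-d-1} \to A_{\sigma - m}$ gives $\bar f \cdot A_{\sigma - m} = 0$ in $A_\sigma \cong \sfk$, so the Gorenstein pairing forces $\bar f = 0$, i.e., $f \in \langle h_1, h_2\rangle$. The main obstacle is the first step: the Kronecker normal form analysis for $M$ requires exploiting both the $\bmu = \bmu_{\mathrm{min}}$ hypothesis and the linear independence of $(a_0, \ldots, a_d)$; the rest is either careful bookkeeping or a clean Gorenstein duality argument, with $n \geq d + 1$ entering precisely to keep $\sigma - n$ in nonnegative degree.
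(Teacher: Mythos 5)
Your proof is correct and follows a genuinely different route from the paper's. The paper disposes of Theorem~\ref{ancestor1} in two lines: it observes that $\tau = (d+1)-(d-1) = 2$ and then invokes Lemma~\ref{ancprop} (a citation of \cite[(2.43)]{I2}, the general structure theorem for ancestor ideals, giving the minimal free resolution of $R/\overline{V}$) together with the resulting decomposition \eqref{Vdecomp}. You instead give a self-contained, constructive argument in three stages: (i) the Kronecker--Weierstrass normal form of the linear pencil $M$ inside the $\bmu$-basis matrix, with the full-column-rank-at-every-point observation (correctly derived from linear independence of the $a_i$) eliminating Jordan and row-deficient blocks and leaving exactly two $L^T$-blocks of sizes $(k_1{+}1)\times k_1$, $(k_2{+}1)\times k_2$; (ii) Laplace expansion of the signed maximal minors along the last column, which makes $h_1, h_2$ and the decomposition $I_n = R_{k_1}h_2 \oplus R_{k_2}h_1$ explicit, with coprimality inherited from $\gcd(a_i)=1$; and (iii) identification of $\langle h_1,h_2\rangle$ with the ancestor ideal via the Gorenstein pairing on the complete intersection $R/\langle h_1,h_2\rangle$, with $n\ge d+1$ entering exactly as the condition $\sigma - n \ge 0$. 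This is, as you note, the natural generalization of \cite[Prop.~2.1]{KPU} from $d=3$ to all $d$. What the paper's route buys is brevity and access to the general $\tau$ case (arbitrary $\bmu$) at no extra cost, since Lemma~\ref{ancprop} is stated in that generality. What your route buys is an explicit normal form for the $\bmu$-basis and an elementary identification of the ancestor ideal without invoking the Hilbert-function machinery of \cite{I2}; it also makes the relation to the scroll decomposition (Corollary~\ref{ancestorcor}) and the two normal forms of Example~\ref{WJGexample} transparent. One small caveat worth stating explicitly if you write this up: the Kronecker decomposition into two $L^T$-blocks holds over the arbitrary infinite base field $\sfk$ of the paper (not just $\overline{\sfk}$) precisely because the regular part of the pencil is empty; equivalently, the cokernel of $M:\mathcal O(-1)^{d-1}\to\mathcal O^{d+1}$ is a rank-2 vector bundle on $\PP^1_{\sfk}$, and Grothendieck's splitting theorem holds over any field.
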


Since $I_n$ has vector space dimension $d+1$, this theorem implies that
\[
d+1 = \alpha_1 + \alpha_2, \quad \alpha_i = n+1-\deg(h_i) \ge 1.
\]
If we assume $\deg(h_1) \ge \deg(h_2)$, then $\alpha_1 \le \alpha_2$,
so that we have the partition
\[
\mathcal{A} = (\alpha_1,\alpha_2)
\]
of $d+1$.  

This partition determines the \emph{rational normal scroll}
$S_{\alpha_1-1,\alpha_2-1} \subseteq \PP^d$, which consists of the
points
\[
\lambda \hskip.5pt(s^{\alpha_1-1},s^{\alpha_1-2}t \dots,
t^{\alpha_1-1},0,\dots,0) + \mu\hskip.5pt(0,\dots, 0, s^{\alpha_2-1},
s^{\alpha_1-2}t\dots,t^{\alpha_2-1})
\]
for all $(s,t), (\lambda,\mu)$ in $\PP^1$.  In this formula, the first
expression in parentheses is the \emph{rational normal curve} of
degree $\alpha_1-1$, sitting in the first $\alpha_1$ coordinates of
$\PP^d$; the second expression in parentheses is the rational normal
curve of degree $\alpha_2-1$, sitting in the last $\alpha_2$
coordinates of $\PP^d$.  Note how this uses $\alpha_1+\alpha_2 = d+1$.
These two rational normal curves are the ``edges" of the scroll, which
consists of lines parametrized by $(\lambda,\mu)$ that connect the
points on the two edges with the same parameter value $(s,t)$.  It is
well-known that $S_{\alpha_1-1,\alpha_2-1}$ is a surface of degree
$d-1$ in $\PP^d$ (see \cite{EH}).

In our partition, we assume $1 \le \alpha_1 \le \alpha_2$.  When
$\alpha_1 = 1$, the ``rational normal curve of degree 0'' is just a
point $(1,0,\dots,0)$.  Since $\alpha_1+\alpha_2 = d+1$, we have
$\alpha_2 = d$, so that the rational normal scroll
$S_{\alpha_1-1,\alpha_2-1} = S_{0,d-1}$ is just the cone over the
rational normal curve of degree $d-1$ sitting in the last $d$
coordinates of $\PP^d$.

To see how this scroll relates to the paramerization given by $I =
\langle a_0,\dots,a_d\rangle$, we use Theorem~\ref{ancestor1} to write
the ideal as 
\[
I = \langle s^{\alpha_1-1}\hskip.5pt h_1, \dots,
t^{\alpha_1-1}\hskip.5pt h_1,s^{\alpha_2-1}\hskip.5pt
h_2,\dots,t^{\alpha_2-1}\hskip.5pt h_2\rangle.
\]
Switching to these generators of $I$ corresponds to a coordinate
change in $\PP^d$. For $(s,t) \in \PP^1$, the parametrization gives
the point 
\begin{equation}
\label{onscroll}
h_1(s,t)(s^{\alpha_1-1},\dots, t^{\alpha_1-1},0,\dots,0) +
h_2(s,t)(0,\dots,0,s^{\alpha_2-1},\dots,t^{\alpha_2-1}), 
\end{equation}
which is clearly on $S_{\alpha_1-1,\alpha_2-1}$.  Hence we have
proved:

\begin{corollary}
\label{ancestorcor}
Let $I = \langle a_0,\dots,a_d\rangle\subseteq R$ have $\bmu$-type
$\bmu_{\mathrm{min}} = (1,\dots,1,n-d+1)$, $n \ge d+1$.  If the
ancestor ideal of $I$ gives the partition $\mathcal{A} =
(\alpha_1,\alpha_2)$ of $d+1$, then after a change of coordinates in
$\PP^d$, the parametrized curve lies on the rational normal scroll
$S_{\alpha_1-1,\alpha_2-1}$.
\end{corollary}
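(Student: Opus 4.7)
The plan is to invoke Theorem~\ref{ancestor1} and then read off the scroll structure directly from a well-chosen basis of $I_n$. First I would apply Theorem~\ref{ancestor1} to obtain relatively prime polynomials $h_1, h_2$, with $\deg(h_i) = n+1-\alpha_i$, satisfying
\[
I_n = R_{\alpha_1-1}\cdot h_1 \oplus R_{\alpha_2-1}\cdot h_2.
\]
Since $(a_0,\dots,a_d)$ are linearly independent elements of $I_n$ of degree $n$, we have $\dim_{\sfk} I_n = d+1$, and comparing dimensions in the above decomposition recovers the partition identity $\alpha_1+\alpha_2 = d+1$.

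Next I would replace $(a_0,\dots,a_d)$ by the explicit monomial basis of $I_n$ adapted to this decomposition, namely
\[
s^{\alpha_1-1}h_1,\ s^{\alpha_1-2}t\hskip.5pt h_1,\dots,\ t^{\alpha_1-1}h_1,\ s^{\alpha_2-1}h_2,\ s^{\alpha_2-2}t\hskip.5pt h_2,\dots,\ t^{\alpha_2-1}h_2.
\]
Because $(a_0,\dots,a_d)$ is also a $\sfk$-basis of $I_n$, the transition matrix between the two bases lies in $\mathrm{GL}(d+1,\sfk)$, and hence defines a projective-linear change of coordinates on $\PP^d$. Under this change of coordinates, the parametrization $(a_0(s,t),\dots,a_d(s,t))$ is transformed into the parametrization whose $(d+1)$ coordinates are precisely the polynomials in the monomial basis above, i.e., into the right-hand side of \eqref{onscroll}.

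Finally, I would match \eqref{onscroll} with the parametric definition of the scroll $S_{\alpha_1-1,\alpha_2-1}$ given in the text: setting $(\lambda,\mu) = (h_1(s,t),h_2(s,t))\in\PP^1$ shows that the transformed parametrization lies on $S_{\alpha_1-1,\alpha_2-1}$, completing the argument. In this plan the only real difficulty is Theorem~\ref{ancestor1} itself, whose proof is deferred to Section~\ref{smallestproofs}; granted that result, the corollary reduces to a bookkeeping argument, the key conceptual step being the observation that changing generators of $I_n$ amounts to a linear change of coordinates in $\PP^d$ preserving the image curve.
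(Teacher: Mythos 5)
Your proposal is correct and follows essentially the same route as the paper: apply Theorem~\ref{ancestor1} to get the direct sum decomposition of $I_n$, pass to the monomial basis adapted to $h_1,h_2$ (which is a linear change of coordinates on $\PP^d$), and observe that the resulting parametrization matches \eqref{onscroll} and hence lies on $S_{\alpha_1-1,\alpha_2-1}$. Your explicit identification $(\lambda,\mu)=(h_1(s,t),h_2(s,t))$ is a small, welcome clarification of the paper's ``which is clearly on'' step, but the argument is otherwise identical.
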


\begin{example}
\label{WJGexagain}
When $d = 3$ and $\bmu_{\mathrm{min}} = (1,1,n-2)$, the only two
partitions of $4$ are $4 = 2+2 = 3+1$.  The corresponding rational
normal scrolls are $S_{1,1}$, defined by $x_0x_3=x_1x_2$, and
$S_{0,2}$, defined by $x_1x_3=x_2^2$.  Hence we recover (after a small
change of coordinates) the two quadric surfaces encountered in
Example~\ref{WJGexample}. 
\end{example}

Finally, fix a partition $\mathcal{A} = (\alpha_1,\alpha_2)$ of $d+1$
with $1 \le \alpha_1 \le \alpha_2$.  Then define
$\calP_{n,d,\mathcal{A}}^{\sbmu_{\mathrm{min}}}\subseteq
\calP_{n,d}^{\sbmu_{\mathrm{min}}}$ to be the subset consisting of
\emph{all} parametrizations in the stratum whose ancestor ideal gives
the partition $\mathcal{A}$.  Recall from
Proposition~\ref{bigsmallprop} that
$\dim(\calP_{n,d}^{\sbmu_{\mathrm{min}}}) = d^2+d+n$ when $n \ge d+1$.
We defer the proof of the following result until
Section~\ref{smallestproofs}.

\begin{theorem}
\label{ancestor2}
The subsets $\calP_{n,d,\mathcal{A}}^{\sbmu_{\mathrm{min}}}\subseteq
\calP_{n,d}^{\sbmu_{\mathrm{min}}}$ have the following properties: 
\begin{enumerate}
\item  $\calP_{n,d,\mathcal{A}}^{\sbmu_{\mathrm{min}}}$ is irreducible
  and is open in its Zariski closure in
  $\calP_{n,d}^{\sbmu_{\mathrm{min}}}$ 
\item $\dim(\calP_{n,d,\mathcal{A}}^{\sbmu_{\mathrm{min}}}) = d^2+d+2n
  - \max(0,a_2-a_1-1)$. 
\item The Zariski closure
  $\overline{\mathcal{P}}^{\sbmu_{\mathrm{min}}}_{n,d,\mathcal A}$ in
  $\mathcal{P}^{\sbmu_{\mathrm{min}}}_{n,d}$ satisfies
\[
\overline{\mathcal{P}}^{\sbmu_{\mathrm{min}}}_{n,d,\mathcal A}
=\bigcup_{\mathcal A'\le \mathcal A}\mathcal
P^{\sbmu_{\mathrm{min}}}_{n,d,\mathcal A'}, 
\]
where the union is over $2$-part partitions $\mathcal A'$ of $d+1$. 
\end{enumerate}
\end{theorem}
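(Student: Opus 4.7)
The plan is to construct a surjective morphism from an irreducible variety onto $\calP_{n,d,\mathcal{A}}^{\sbmu_{\mathrm{min}}}$, read off (1) and (2) from a fiber computation, and obtain (3) by combining an explicit degeneration with an upper semi-continuity argument for the ancestor ideal.

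Put $V_i = R_{n+1-\alpha_i}$ and let $U \subseteq V_1 \times V_2$ be the Zariski open locus of pairs $(h_1,h_2)$ with $\gcd(h_1,h_2)=1$. Using the edge vectors $e_\alpha, f_\alpha$ appearing in \eqref{onscroll}, define
\[
\varphi : U \times \mathrm{GL}(d+1) \longrightarrow R_n^{d+1}, \qquad (h_1,h_2,g) \longmapsto g\cdot(h_1 e_\alpha + h_2 f_\alpha).
\]
Theorem~\ref{ancestor1} and Corollary~\ref{ancestorcor} imply that $\varphi$ surjects onto $\calP_{n,d,\mathcal{A}}^{\sbmu_{\mathrm{min}}}$, so irreducibility of $U\times\mathrm{GL}(d+1)$ forces irreducibility of the stratum. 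Inside $\calP_{n,d}^{\sbmu_{\mathrm{min}}}$ the Hilbert function of $R/I$ is constant, so one has a flat family; within it, the condition that the ancestor ideal $J$ matches the Hilbert function dictated by $\mathcal{A}$ is locally closed, since $\dim J_m$ equals the kernel dimension of the natural multiplication $R_m \to \mathrm{Hom}_{\sfk}(R_{n-m}, R_n/I_n)$ and hence is upper semi-continuous. Thus $\calP_{n,d,\mathcal{A}}^{\sbmu_{\mathrm{min}}}$ is locally closed, and in particular open in its Zariski closure, proving (1).

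For (2), I would analyze the fiber of $\varphi$. Two preimages $(h_1,h_2,g)$ and $(h_1',h_2',g')$ of the same point satisfy $G(h_1 e_\alpha + h_2 f_\alpha) = h_1' e_\alpha + h_2' f_\alpha$ with $G=(g')^{-1}g$; since the ancestor ideal is intrinsic, $\langle h_1',h_2'\rangle=\langle h_1,h_2\rangle$. When $\alpha_1<\alpha_2$ this forces $h_2' = \lambda h_2$ and $h_1' = \mu h_1 + p h_2$ with $\mu,\lambda \in \sfk^*$ and $p \in R_{\alpha_2-\alpha_1}$; matching monomials column by column, and using $\gcd(h_1,h_2)=1$ together with the inequality $\deg h_2 > \alpha_1-1$ (which holds since $n \ge d+1$), shows that $G$ is block-upper-triangular with diagonal blocks $\mu \mathrm{Id}_{\alpha_1}$ and $\lambda\mathrm{Id}_{\alpha_2}$ and a Hankel off-diagonal block built from the coefficients of $p$. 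When $\alpha_1=\alpha_2$, $(h_1',h_2')$ differs from $(h_1,h_2)$ by an element of $\mathrm{GL}(2)$, and the corresponding $G$ has the form $\bigl(\begin{smallmatrix} aI_{\alpha_1} & bI_{\alpha_1}\\ cI_{\alpha_1} & dI_{\alpha_1}\end{smallmatrix}\bigr)$. In both cases the fiber has dimension $\max(0,\alpha_2-\alpha_1-1)+4$, so
\[
\dim\calP_{n,d,\mathcal{A}}^{\sbmu_{\mathrm{min}}} = (2n+3-d) + (d+1)^2 - \bigl(\max(0,\alpha_2-\alpha_1-1)+4\bigr) = d^2+d+2n-\max(0,\alpha_2-\alpha_1-1),
\]
which gives (2).

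For (3), the inclusion $\subseteq$ is the semi-continuity statement used in (1): along a flat limit inside $\calP_{n,d}^{\sbmu_{\mathrm{min}}}$, $\dim J_m$ can only go up, so the minimum-degree generator of the ancestor ideal can only drop in degree, i.e., $\alpha_1^{\mathrm{lim}}\le\alpha_1$, which translates to $\mathcal{A}^{\mathrm{lim}}\le\mathcal{A}$. For $\supseteq$, given $\mathcal{A}'<\mathcal{A}$ and a parametrization in $\calP_{n,d,\mathcal{A}'}^{\sbmu_{\mathrm{min}}}$, I would construct a one-parameter family in $\calP_{n,d,\mathcal{A}}^{\sbmu_{\mathrm{min}}}$ specializing to it by perturbing the $\bmu$-basis matrix $A$ from \eqref{genexact}; the transition between \eqref{KPU1} and \eqref{KPU2} in Example~\ref{WJGexample} (which realizes $\mathcal{A}'=(1,3)\le(2,2)=\mathcal{A}$ in the case $d=3$) is the prototype. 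The main obstacle I expect is verifying that the generic member of the deforming family has ancestor partition exactly $\mathcal{A}$, rather than some partition strictly between $\mathcal{A}'$ and $\mathcal{A}$; this requires using the Hankel structure identified in the fiber analysis to ensure the perturbation is sufficiently generic.
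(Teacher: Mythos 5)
Your proof takes a genuinely different route from the paper's.  The paper establishes all three parts by translating into the language of \cite{I2}: it identifies $\calP^{\sbmu_{\mathrm{min}}}_{n,d,\mathcal{A}}$ with $\pi^{-1}(\Grass_H(d+1,n))$ via Lemma~\ref{coverlem}, reads off irreducibility, openness, and the closure formula from Theorem~\ref{closureAmuthm} (which is \cite[Theorem 2.32]{I2}), and gets the codimension from Theorem~\ref{maincodthm} (\cite[Theorem 2.24]{I2}) combined with Lemma~\ref{twopartition}.  You instead build an explicit parametrizing surjection $\varphi: U \times \mathrm{GL}(d+1) \to \calP^{\sbmu_{\mathrm{min}}}_{n,d,\mathcal{A}}$ from the ancestor-ideal data.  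Your irreducibility and local-closedness arguments for (1) are sound (the upper semi-continuity of $\dim J_m$ via $J_m = \ker\bigl(R_m \to \mathrm{Hom}_\sfk(R_{n-m}, R_n/I_n)\bigr)$ is exactly right), and the fiber-dimension count for (2) checks out: the source has dimension $(2n+3-d) + (d+1)^2$, the fiber has dimension $\max(0,\alpha_2-\alpha_1-1)+4$ in both the $\alpha_1<\alpha_2$ and $\alpha_1=\alpha_2$ cases, and the arithmetic reduces to $d^2+d+2n-\max(0,\alpha_2-\alpha_1-1)$.  This is arguably a more elementary derivation of the codimension formula than citing \cite[Theorem 2.24]{I2}.

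However, there is a genuine gap in part (3).  The $\subseteq$ inclusion is fine: semi-continuity of $\dim J_m$ forces $H_{\overline{V}}$ to drop (weakly) in low degrees along a limit, which with Lemma~\ref{twopartition} and the fact that $2$-part partitions of $d+1$ are totally ordered by $\alpha_1$ gives $\mathcal{A}^{\mathrm{lim}} \le \mathcal{A}$.  But the $\supseteq$ inclusion — that \emph{every} point of $\calP^{\sbmu_{\mathrm{min}}}_{n,d,\mathcal{A}'}$ for $\mathcal{A}' < \mathcal{A}$ actually arises as such a limit — is only sketched.  You correctly identify the crux: one must produce a degeneration whose general fiber has ancestor partition exactly $\mathcal{A}$ and whose special fiber is a prescribed point of $\calP^{\sbmu_{\mathrm{min}}}_{n,d,\mathcal{A}'}$, and nothing in the sketch guarantees the general fiber does not land in a stratum strictly between $\mathcal{A}'$ and $\mathcal{A}$.  (It would suffice to treat the immediate-predecessor case, since the $\mathcal{A}$'s form a chain — you don't use this, and it would simplify matters — but even that case needs an explicit family together with a verification of the general fiber's type, e.g., by a rank computation on the multiplication map.)  Since you acknowledge this step is unverified, the closure formula remains unproved in your argument.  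This is precisely the content that the paper imports from \cite[Theorem 2.32]{I2}.
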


\begin{example}
\label{WJGfinal}
Continuing our study of $(1,1,n-2)$ with $n \ge 4$, we have the
partitions $(1,3) \le (2,2)$.  The larger partition $(2,2)$ is generic
and gives an open dense subset of $\calP_{n,3}^{(1,1,n-2)}$, of dimension
$2n+12$.  Parametrizations with the smaller partition $(1,3)$ lie in a
closed subset of codimension 1 since $\max(0,3-1-1) = 1$. 
\end{example}

In the above example, recall from \cite[Cor.\ 6.8]{WJG} that the
curves corresponding to $(2,2)$ are smooth while those for $(1,3)$
have a unique singular point of multiplicity $n-2$.  It would be
interesting to study the singularities of curves in $\mathcal
P^{\sbmu_{\mathrm{min}}}_{n,d,\mathcal A}$ in the general case when
$\bmu_{\mathrm{min}} = (1,\dots,1,n-d+1)$. 

We conclude by noting that \emph{all} ideals coming from
parametrizations, not just those in the smallest stratum, have
ancestor ideals that determine rational normal scrolls (possibly of
high dimension) containing the curve.  More precisely, suppose we have
$I = \langle a_0,\dots,a_d\rangle$, where $a_0,\dots,a_d \in R_n$ are
linearly independent and relatively prime.  If $I$ has $\bmu$-type
$\bmu = (\mu_1,\dots,\mu_d)$, then we will see in
Appendix~\ref{proofs} that the ancestor ideal of $I$ can be written
\[
\langle h_1,\dots,h_\tau\rangle,
\]
where 
\begin{equation}
\label{tauformula}
\tau = d+1-\#\{i \mid \mu_i=1\}
\end{equation}
and 
\begin{equation}
\label{Indecomp}
I_n = R_{n-\deg h_1}\cdot  h_1\oplus R_{n-\deg h_2}\cdot h_2\oplus
\cdots \oplus R_{n-\deg h_\tau}  \cdot h_\tau. 
\end{equation}
Note that $\tau = 2$ precisely when $\bmu$ has $d-1$ indices with
$\mu_i = 1$, i.e., when $\bmu = (1,\dots,1,n-d+1)$ and $n \ge d+1$.   

The decomposition \eqref{Indecomp} of $I_n$ gives the partition
\begin{equation}
\label{dp1partition}
d+1=\sum_{i=1}^{\tau} \alpha_i,\quad \alpha_i = n+1-\deg h_i.
\end{equation}
Setting $\mathcal{A} = (\alpha_1,\dots,\alpha_\tau)$ determines a
subset $\calP_{n,d,\mathcal{A}}^{\sbmu}$, which as we will see in
Section~\ref{smallestproofs} has codimension and closure properties
similar to those stated in Theorem~\ref{ancestor2}.

The partition \eqref{dp1partition} gives a rational normal scroll
$S_{\alpha_1-1,\dots,\alpha_\tau-1} \subseteq \PP^d$.  This is formed
by putting $\tau$ rational normal curves in $\PP^d$ using the first
$\alpha_1$ coordinates for the first curve, the next $\alpha_2$
coordinates for the second, and so on.  This works since the
$\alpha_i$ partition $d+1$.  As in the surface case considered
earlier, the ``rational normal curve'' reduces to a point when
$\alpha_i = 1$.

The $\tau$ rational normal curves are the ``edges'' of the scroll.
For a fixed parameter value $(s,t)$, we get $\tau$ points, one on each
of the $\tau$ curves.  These points determine a subspace of dimension
$\tau-1$.  Then $S_{\alpha_1-1,\dots,\alpha_\tau-1}$ is the union of
these subspaces as we vary $(s,t)$ over $\PP^1$.

When $(a_0,\ldots ,a_d)$ have no common factor, it is not hard to show
that
\[
2\le \tau\le \min\{d+1, n+1-d\}.
\]
If $\tau=d+1$, it is easy to see that there is a unique partition
$\mathcal{A} = (1, \ldots , 1)$ and $S_{0,\dots,0}$ is just $\PP^d$;
and if $\tau=d$, then the unique partition is $\mathcal{A} =
(1,\ldots, 1,2)$ and we again get $\PP^d$.  So the interesting cases
are when $\tau \le d-1$.  Here $S_{\alpha_1-1,\dots,\alpha_\tau-1}$
has dimension $\tau$, and one can show that its degree is
\begin{equation}
\label{Sdegree}
\deg(S_{a_1-1,\dots,a_\tau-1}) = d+1-\tau.
\end{equation}
See \cite{EH} for more on rational normal scrolls.

When $I$ has ancestor ideal $\langle h_1,\dots,h_\tau\rangle$,
\eqref{onscroll} generalizes to show that the corresponding curve lies
on $S_{a_1-1,\dots,a_\tau-1}$.  By \eqref{tauformula} and
\eqref{Sdegree}, the degree of the scroll equals $\#\{i \mid \mu_i =
1\}$.  Thus the number of $1$'s in the $\bmu$-type of the
parametrization determines the dimension and degree of the scroll
containing the curve.  Note also that the interesting case $\tau \le
d-1$ occurs only when $\#\{i \mid \mu_i = 1\} \ge 2$.  In $\PP^3$,
these are the $\bmu$-types $(1,1,n-2)$ considered in
Examples~\ref{WJGexample}, \ref{WJGexagain}, and \ref{WJGfinal}.

Finally, we should mention that the rational normal scrolls discussed
here are closely related to (but not the same as) the scrolls
considered in \cite{KPU}. They study $\bmu = (1,...,1,n-d+1)$, so
$\tau = 2$.  In this case, our rational normal scroll is the surface
$S_{a_1-1,a_2-2} \subseteq \mathbb P^d$.  In [KPU], they work in
$\mathbb P^{d+2}$ with homogeneous coordinates $x_0,\dots,x_d,s,t$ and
consider the three-dimensional scroll $S_{a_1-1,a_2-2,1} \subseteq
\mathbb P^{d+2}$.

\section*{Acknowlegments} 
We would like to thank the referees for their comments and careful
reading of the paper.  The second author is grateful to Claudia Polini
and others for conversations at the ALGA conference at IMPA in 2012.
These conversations led to \cite{I3} and, serendipitously, to this paper.

\appendix

\section{Proofs of the Main Results}
\label{proofs}

\small

Most proofs omitted in Section \ref{higher} of the paper can be found
in \cite{I2}.  However, \cite{I2} is written in the language of
Grassmannians, ancestor ideals, and Hilbert functions, so some
translation is needed to the situation of this paper.  Section
\ref{higherproofs} of the appendix is for experts who want to make
sure that nothing has been lost in translation. Section
\ref{nonproperproofs} proves the results of Section \ref{nonproper};
Section \ref{smallestproofs} proves the results of Section
\ref{smallest} and gives some extensions of those results to
non-minimal $\bmu$-strata.

\subsection{Notation} 
We have worked with parametrizations $(a_0,\dots,a_d) \in R_n^{d+1}$
and their associated ideals $I = \langle a_0,\dots,a_d\rangle
\subseteq R$.  In \cite{I2}, the focus is on subspaces $V \subseteq
R_n$ of dimension $d+1$, and (in the notation of \cite{I2}) the ideal
$(V) \subseteq R$ generated by $V$.  One can translate between
\cite{I2} and this paper via
\begin{align*}
V &\longleftrightarrow \mathrm{Span}(a_0,\dots,a_d) \\
(V) &\longleftrightarrow  \langle a_0,\dots,a_d\rangle.
\end{align*}

Subspaces $V \subseteq R_n$ of dimension $d+1$  correspond to
elements of the Grassmannian $\Grass(d+1,R_n)$.  The Hilbert function
of the graded $\sfk$-algebra $R/(V)$ will be denoted $H_V$.   Thus
\[
H_V(m) = \dim_{\sfk}((R/(V))_m)
\]
for $m \ge 0$.  We say $H_V\le H_{V'}$ if $H_V(m) \le H_{V'}(m)$ for
all $m\ge 0$.   
  
\subsection{Proofs for Section~\ref{higher}}\label{higherproofs}
Recall that $\mathcal {CP}_{n,d}$ consists of linearly independent
$(d+1)$-tuples $(a_0,\dots,a_d) \in R_n^{d+1}$.  This can be regarded
as the set of all possible ordered bases of elements of
$\Grass(d+1,R_n)$.  In particular, we have a map 
\[
\pi :\mathcal {CP}_{n,d}\longrightarrow \Grass (d+1,R_n)
\]
defined by $\pi(a_0,\dots,a_n)=\mathrm{Span}(a_0,\ldots ,a_d)
\subseteq R_n$.  We denote by $\Grass^\sbmu_{n,d}$ the image $\pi
(\mathcal {P}^\sbmu_{n,d})$, where $\calP^\sbmu_{n,d} \subset
\mathcal{CP}_{n,d}$ is defined in Section~\ref{generalcase}.

Any two ordered bases of $V$ are related by a unique element of the
general linear group $\mathrm{GL}(d+1,\sfk)$.  Note also that
$\mathrm{GL}(d+1,\sfk)$ is a Zariski open subset of
$\Mat_{d+1}(\sfk)$, which is an affine space of dimension $(d+1)^2$.
Hence we get the following result that relates parametrizations to
subspaces.

\begin{lemma}
\label{coverlem}
 The projection $\pi$ makes $\mathcal {CP}_{n,d}$ into a locally
 trivial bundle over $\Grass(d+1,R_n)$ with fibre isomorphic to
 $\mathrm{GL}(d+1,\sfk)$.  Thus:
 \begin{enumerate}
\item We have an equality of codimensions
 \[
 \cod(\mathcal {P}^\sbmu_{n,d} \subseteq \mathcal {CP}_{n,d})
 =\cod(\Grass^\sbmu_{n,d} \subseteq \Grass (d+1,R_n)). 
 \]
 \item The Zariski closures of $\mathcal{P}^\sbmu_{n,d}$ and 
 $\Grass^\sbmu_{n,d}$ are related by
 \[
\overline{\mathcal {P}}^\sbmu_{n,d}=\pi^{-1}
(\overline{\Grass}^\sbmu_{n,d}).
 \]
 \end{enumerate}
 \end{lemma}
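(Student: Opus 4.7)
The plan is to exhibit $\pi$ explicitly as a principal $\mathrm{GL}(d+1,\sfk)$-bundle over $\Grass(d+1,R_n)$, then derive the codimension and closure statements as formal consequences of the fact that smooth surjective morphisms with irreducible fibers preserve codimension and commute with Zariski closure under $\pi^{-1}$.

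First I would fix a basis of $R_n$ (say the monomial basis) and cover $\Grass(d+1, R_n)$ by its standard affine charts $U_J$, indexed by $(d+1)$-element subsets $J$: the chart $U_J$ consists of those subspaces $V \subseteq R_n$ that project isomorphically onto the coordinate subspace indexed by $J$. On each $U_J$ there is a canonical ordered basis $v_1^J(V), \ldots, v_{d+1}^J(V)$ of $V$, obtained from reduced row-echelon form, whose entries are regular functions of the affine coordinates on $U_J$. Over $U_J$, any ordered basis $(a_0, \ldots, a_d)$ of $V$ is uniquely determined by the invertible $(d+1)\times(d+1)$ matrix $M$ expressing the $a_i$ in terms of the $v_j^J(V)$. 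The assignment $(a_0,\dots,a_d) \mapsto (V, M)$ then yields a regular isomorphism $\pi^{-1}(U_J) \simeq U_J \times \mathrm{GL}(d+1,\sfk)$ compatible with $\pi$, which is the desired local triviality.

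For the two consequences, the key observation is that the $\bmu$-type of $(a_0,\dots,a_d)$ depends only on the ideal $\langle a_0, \ldots, a_d\rangle$, and hence only on $V = \mathrm{Span}(a_0, \ldots, a_d)$. It follows that
\[
\mathcal{P}^\sbmu_{n,d} = \pi^{-1}(\Grass^\sbmu_{n,d}).
\]
Part (1) is then immediate: since $\pi$ is smooth with all fibers irreducible of the same dimension $(d+1)^2$, pulling back a locally closed subvariety preserves codimension. For part (2), the map $\pi$ is smooth and surjective, hence open and flat; a standard argument then gives $\overline{\pi^{-1}(Z)} = \pi^{-1}(\overline{Z})$ for any locally closed $Z \subseteq \Grass(d+1,R_n)$, and taking $Z = \Grass^\sbmu_{n,d}$ gives exactly the claimed identity.

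The only step requiring real care is verifying that the canonical ordered basis $v_1^J(V), \ldots, v_{d+1}^J(V)$ on $U_J$ depends regularly on $V$, so that the resulting matrix $M$ is a regular function of both $V \in U_J$ and $(a_0,\ldots,a_d) \in \pi^{-1}(U_J)$. This is a standard computation with reduced row-echelon form over $U_J$, but it is the only place where one must look at Pl\"ucker coordinates or affine charts in detail; once this bookkeeping is in hand, both assertions of the lemma follow purely formally.
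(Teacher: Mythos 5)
Your proof is correct and follows the same approach the paper intends: the paper merely notes that fibers of $\pi$ are torsors for $\mathrm{GL}(d+1,\sfk)$, which is open in the affine space $\Mat_{d+1}(\sfk)$, and states the lemma without further argument. You have simply filled in the standard details of local triviality over the Grassmannian's affine charts, together with the observation $\calP^\sbmu_{n,d}=\pi^{-1}(\Grass^\sbmu_{n,d})$ and the formal properties of open surjective morphisms, which is exactly what the paper's terse justification relies on.
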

 
Fix $(d+1)$-dimensional subspaces $V$ and $V'$ of $R_n$.  Since the
$\bmu$-type depends only on the ideal (see Proposition~\ref{muprop}),
the ideals $(V)$ and $(V')$ have respective
$\bmu$-types $\bmu$ and $\bmu'$.  These ideals also have Hilbert
functions $H_V$ and $H_{V'}$.  We need the following comparison
result.

\begin{lemma}
\label{mucomparisonlem}
Suppose $V$ and $V'$ are $(d+1)$-dimensional subspaces of $R_n$ with
respective $\bmu$-types $\bmu$ and $\bmu'$ and Hilbert functions  
$H_V$ and $H_{V'}$.  Then:
\begin{enumerate}
\item $H_V(m) = n - |\bmu|$ for $m \gg 0$.
\item $\bmu' \le \bmu$ if and only if $H_{V'} \ge H_V$.
\item $\bmu' = \bmu$ if and only if $H_{V'} = H_V$.
\end{enumerate}
\end{lemma}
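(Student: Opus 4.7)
The plan is to use the resolution in Proposition~\ref{muprop} to compute $H_V$ as an alternating sum of shifted Hilbert functions of $R$, and then to reduce parts (2) and (3) to a purely combinatorial equivalence between the partial-order condition $\bmu'\le\bmu$ and a pointwise inequality of partition functions. For part (1), the sequence \eqref{genexact} is a graded free resolution of $(V)$, so taking Euler characteristics degree by degree gives
\[
H_V(m) \;=\; \dim R_m \;-\; (d+1)\dim R_{m-n} \;+\; \sum_{i=1}^d \dim R_{m-n-\mu_i},
\]
where $\dim R_k=\max(k+1,0)$. Once $m\ge n+\mu_d$ every summand is positive, the linear-in-$m$ contributions cancel, and the formula collapses to $H_V(m)=n-|\bmu|$, proving (1).

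For (2) and (3), observe first that for $m<n$ the ideal $(V)$ vanishes in degree $m$, so $H_V(m)=m+1$ is independent of $V$ and the comparison reduces to $m\ge n$. Writing $m=n+j$ with $j\ge 0$, the same formula yields
\[
H_V(n+j)\;=\;n-d(j+1)+f(\bmu,j),\qquad f(\bmu,j):=\sum_{i=1}^d\max(0,\,j+1-\mu_i).
\]
Hence $H_{V'}\ge H_V$ on all of $\N$ if and only if $f(\bmu',j)\ge f(\bmu,j)$ for every $j\ge 0$, and equality of Hilbert functions corresponds to pointwise equality of the $f$'s. Using $\max(0,c-x)=c-\min(x,c)$, the $f$-inequalities rewrite as
\[
\sum_{i=1}^d\min(\mu'_i,c)\;\le\;\sum_{i=1}^d\min(\mu_i,c)\qquad\text{for all integers }c\ge 1.
\]

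It remains to show that this family of inequalities is equivalent to $\bmu'\le\bmu$ in the order of Definition~\ref{orderdef}. I would do this directly. Sort both partitions ascending and, for a given $c$, set $\ell=\#\{i:\mu_i\le c\}$ and $\ell'=\#\{i:\mu'_i\le c\}$, so that $\sum_i\min(\mu_i,c)=\sum_{i\le\ell}\mu_i+(d-\ell)c$ and analogously for $\bmu'$. For the forward direction, combine the partial-sum hypothesis $\sum_{i\le k}\mu'_i\le\sum_{i\le k}\mu_i$ with the trivial bound that the ``middle'' terms lying on the appropriate side of $c$ are each at most $c$, splitting into cases $\ell\le\ell'$ and $\ell>\ell'$. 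For the converse, evaluating the pointwise inequality at the specific value $c=\mu'_k$ recovers $\sum_{i\le k}\mu'_i\le\sum_{i\le k}\mu_i$ via the analogous casework. Part (3) then follows by applying (2) in both directions.

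The main obstacle is the bookkeeping in this last step: the case analysis ($\ell$ versus $k$, and similarly for $\ell'$) is somewhat fiddly, but each branch reduces to a one-line telescoping once the correct grouping is written down. Everything else in the argument is a mechanical alternating-sum computation directly from the resolution of Proposition~\ref{muprop}.
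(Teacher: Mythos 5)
Your proposal is correct and follows the same overall route as the paper: use the free resolution to write
\[
H_V(m)=\dim R_m-(d+1)\dim R_{m-n}+\sum_{i=1}^d\dim R_{m-n-\mu_i},
\]
deduce (1) by direct computation for $m\ge n+\mu_d$, observe that comparison of Hilbert functions is automatic for $m<n$, and reduce (2)--(3) to a combinatorial equivalence between the partial order on $\bmu$ and a pointwise inequality of the partition-dependent term. The paper proves the combinatorial step by graphing $G_V(m)=\sum_i\max(0,m-\alpha_i)$ with $\alpha_i=n+\mu_i-1$ and analyzing when one piecewise-linear graph lies above another; you instead apply $\max(0,c-x)=c-\min(x,c)$ to turn the same quantities into $\min$-sums. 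This is a genuinely cleaner way to finish, and it also closes a gap the paper explicitly leaves open: the paper says ``The converse takes more work \dots\ We omit the details'' for the implication $\bmu'\le\bmu\Rightarrow H_{V'}\ge H_V$, whereas your two-case analysis with $\ell=\#\{i:\mu_i\le c\}$ handles it directly, and your choice $c=\mu'_k$ gives a one-line proof of the other implication via
\[
\sigma_k(\bmu')+(d-k)c=\rho_c(\bmu')\le\rho_c(\bmu)\le\sigma_k(\bmu)+(d-k)c,
\]
since $\min(\mu_i,c)\le\mu_i$ for $i\le k$ and $\min(\mu_i,c)\le c$ for $i>k$. One small caution worth making explicit when you write this up: in the case $\ell'>\ell$ you need $\mu'_i\le c$ for $\ell<i\le\ell'$, and in the case $\ell'<\ell$ you need $\mu'_i\ge c+1$ for $\ell'<i\le\ell$; both follow from the definition of $\ell'$, but the inequality that results in the second case is actually strict, which is fine. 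With those details in place the argument is complete and, if anything, tighter than the published one.
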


\begin{proof}
We first study $H_V$.  Let $\bmu = (\mu_1,\dots,\mu_d)$ with $\mu_i
\le \cdots \le \mu_d$.  By \eqref{genexact}, the ideal $I = 
(V) \subseteq R$ gives the free resolution
\[
0 \longrightarrow {\textstyle\bigoplus_{i=1}^d} R(-n-\mu_i)
\longrightarrow R(-n)^{d+1}
\longrightarrow R \longrightarrow R/I
\longrightarrow 0.
\]
In degree $m$, this becomes
\[
0 \longrightarrow {\textstyle\bigoplus_{i=1}^d} R_{m-n-\mu_i}
\longrightarrow R_{m-n}^{d+1}
\longrightarrow R_m \longrightarrow (R/I)_m
\longrightarrow 0.
\]
Thus 
\begin{equation}
\label{HV}
\begin{aligned}
H_V(m) &= \dim_{\sfk}( (R/I)_m)\\[-10pt] 
&= \dim_{\sfk}(R_m) - (d+1)\dim_{\sfk}(R_{m-n}) +
\sum_{i=1}^d \dim_{\sfk}(R_{m-n-\mu_i}).
\end{aligned}
\end{equation}

Since $\dim_{\sfk}(R_\ell) = \max(0,\ell+1)$ for all $\ell \in
\mathbb{Z}$, an easy computation using \eqref{HV} shows that $H_V(m) =
n - |\bmu|$ for $m \gg 0$.  This proves part (1) of the lemma.

For part (2), set $G_V(m) = \sum_{i=1}^d \dim_{\sfk}(R_{m-n-\mu_i})$.
Since the first two terms in the formula \eqref{HV} for $H_V$ are
independent of $\bmu$, it follows that
\[
G_{V'}\ge G_V\Longleftrightarrow H_{V'}\ge H_V.
\]

Note that $G_V(m) = \sum_{i=1}^d \max(0,m-n-\mu_i+1)$ since
$\dim_{\sfk}(R_\ell) = \max(0,\ell+1)$.  Let $\alpha_i = n+\mu_i-1$.
Then $\alpha_1 \le \cdots \le \alpha_d$ and $G_V(m) = \sum_{i=1}^d
\max(0,m-\alpha_i)$.

For simplicity, assume that $\alpha_1 < \cdots < \alpha_d$.  Then one
checks that
\[
G_V(\alpha_i) = (\alpha_i-\alpha_1) + \cdots +
(\alpha_{i}-\alpha_{i-1}) = (i-1)\alpha_i-\alpha_1 - \cdots
-\alpha_{i-1}.
\]
The graph of $G_V$ consists of the points $(\alpha_i, G_V(\alpha_i))$
linked by line segments of slopes $0,1,2,\dots,d$, where the segments
of slopes $0$ and $d$ are unbounded:
\[
\begin{picture}(233,215)
\thicklines
\put(55,15){$(\alpha_1,0)$}
\put(105,65){$(\alpha_2,\alpha_2-\alpha_1)$}
\put(145,145){$(\alpha_3,2\alpha_3-\alpha_1-\alpha_2)$}
\put(10,20){\line(1,0){40}}
\put(50,20){\circle*{5}}
\put(50,20){\line(1,1){50}}
\put(100,70){\circle*{5}}
\put(100,70){\line(1,2){40}}
\put(140,150){\circle*{5}}
\put(140,150){\line(1,3){20}}
\put(28,22){$\scriptstyle 0$}
\put(71,47.5){$\scriptstyle 1$}
\put(115,112.5){$\scriptstyle 2$}
\put(144.5,182.5){$\scriptstyle 3$}
\end{picture}
\]

\vskip-12pt

\noindent On the interval $\alpha_i \le x \le \alpha_{i+1}$, $G_V(x)$
is linear of slope $i$ and hence is given by
\[
G_V(x) = ix -\alpha_1 - \cdots -\alpha_{i}.
\]
Thus the region above the graph is defined by the inequalities
\begin{equation}
\label{graphineq}
y \ge 0,\ y \ge ix -\alpha_1 - \cdots -\alpha_{i},\ 1 \le i \le d.
\end{equation}

Now suppose $\bmu' = (\mu_1',\dots,\mu_d')$ comes from $V' \subseteq
R_n$ and set $\alpha_i' = n+\mu_i'-1$.  Then we have the following
equivalences:
\begin{align*}
G_{V'} \ge G_V &\iff \text{the graph of $G_{V'}$ lies
above the graph of $G_V$}\\
&\iff \text{the graph of $G_{V'}$ satisfies the inequalities
  \eqref{graphineq}}\\ 
&\iff (\alpha_i',(i-1)\alpha_i'-\alpha_1' - \cdots
-\alpha_{i-1}') \text{ satisfies  \eqref{graphineq} for all } i.
\end{align*}

A straightforward computation shows that
$(\alpha_i',(i-1)\alpha_i'-\alpha_1' - \cdots -\alpha_{i-1}')$
satisfies $y \ge ix -\alpha_1 - \cdots -\alpha_{i}$ if and only if
\[
\alpha_1'+ \cdots + \alpha_i'\le \alpha_1+\cdots +\alpha_i.
\]
This inequality holds for all $i$ when $G_{V'} \ge
G_V$.  Then $\bmu' \le \bmu$ follows immediately since $\alpha_i' =
n+\mu_i'-1$ and $\alpha_i = n+\mu_i-1$.  The converse takes more
work, since one has to prove that $\bmu' \le \bmu$ implies that for
all $i,j$, $(\alpha_i',(i-1)\alpha_i'-\alpha_1' - \cdots
-\alpha_{i-1}')$ satisfies $y \ge jx -\alpha_1 - \cdots -\alpha_{j}$.
We omit the details.

Finally, part (3) of the lemma follows immediately from part (2).
\end{proof}

The graph in the above proof is related to a
Harder-Narasimham partial order on the direct sums of line bundles
on $\PP^1$ (see  \cite[Definition 2.26]{I2}).

We complete the dictionary between Hilbert functions and $\bmu$-types
as follows. 

\begin{lemma}
\label{HVmu}
Fix $n$ and $d$ with $n \ge d$.  Then the map sending $H_V$ to the
$\bmu$-type $\bmu$ of the ideal $(V)$ induces a
well-defined bijection between the following two sets: 
\begin{enumerate}
\item The set of Hilbert functions $T$ such that $T = H_V$ for some
  subspace $V \in \Grass(d+1,R_n)$. 
\item The set of $d$-part partitions $\bmu$ satisfying $d \le |\bmu|
  \le n$.  
\end{enumerate}
It follows that there are only finite many Hilbert functions $T$ in
{\rm (1)}.
\end{lemma}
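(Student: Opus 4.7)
The plan is to deduce well-definedness and injectivity directly from Lemma \ref{mucomparisonlem}(3), verify the containment in (2) from Proposition \ref{muprop}, and handle surjectivity by an explicit construction.

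First, observe that Lemma \ref{mucomparisonlem}(3) asserts $H_V = H_{V'} \iff \bmu = \bmu'$, where $\bmu, \bmu'$ are the $\bmu$-types of $(V)$ and $(V')$. This single equivalence simultaneously shows that the assignment $H_V \mapsto \bmu$ is well defined on the set (1) and that distinct Hilbert functions in (1) go to distinct $d$-part partitions. For the target: by Proposition \ref{muprop}, if $V \in \Grass(d+1,R_n)$ and $h = \gcd$ of any basis of $V$ has degree $c \ge 0$, then $|\bmu| = n - c$, so $|\bmu| \le n$; the lower bound $|\bmu| \ge d$ is automatic since each $\mu_i \ge 1$. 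Hence the image lies in the set (2).

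For surjectivity, I would give a Hilbert--Burch--style construction: given a $d$-part partition $\bmu = (\mu_1,\dots,\mu_d)$ with $d \le |\bmu| \le n$, set $c = n - |\bmu|$ and choose a generic $(d+1) \times d$ matrix $A$ whose $i$-th column consists of $d+1$ forms of degree $\mu_i$. Let $b_0,\dots,b_d$ be the signed maximal minors of $A$; for generic $A$ these are linearly independent, relatively prime forms of degree $|\bmu|$, and the columns of $A$ are a $\bmu$-basis of the syzygy module of $(b_0,\dots,b_d)$, so this ideal has $\bmu$-type $\bmu$. Pick any nonzero $h \in R_c$ and set $a_i = h\,b_i \in R_n$; then $V = \mathrm{Span}(a_0,\dots,a_d)$ lies in $\Grass(d+1,R_n)$, and by Proposition \ref{muprop} the ideal $(V) = \langle a_0,\dots,a_d\rangle$ has $\bmu$-type $\bmu$. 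Finally, the finiteness statement is immediate from the bijection, since the set of $d$-part partitions $\bmu$ with $|\bmu| \le n$ is obviously finite.

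The main obstacle is justifying that the generic $A$ in the construction actually realizes the prescribed $\bmu$-type, i.e., that no moving hyperplane of degree strictly less than $\mu_1$ exists and that the columns of $A$ generate the entire syzygy module, rather than producing a subspace with a strictly smaller $\bmu$-type in the partial order of Definition \ref{orderdef}. This semicontinuity point is the subtle one: the conditions ``$b_i$ are relatively prime,'' ``$b_i$ are linearly independent,'' and ``the syzygy module has no generator of degree $< \mu_1$'' each cut out a Zariski open subset of the affine space of matrices $A$, and one must show each is nonempty. Nonemptiness can be verified by exhibiting a single monomial-style matrix (for instance, a block-diagonal choice with entries among $s^{\mu_i}, t^{\mu_i}$) for which the resolution \eqref{genexact} is manifestly minimal with the correct degree shifts, after which the generic statement follows from upper semicontinuity of the $\bmu$-type in the sense of Theorem \ref{relprimeclosure}.
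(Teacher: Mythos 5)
Your proof is essentially correct but takes a genuinely different route from the paper for the surjectivity step, which is the only substantive part of the argument. The well-definedness, injectivity, and containment in set (2) are handled identically in both proofs, via Lemma~\ref{mucomparisonlem}(3) and Proposition~\ref{muprop}.

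For surjectivity, the paper works purely numerically: given $\bmu$, it writes down the candidate Hilbert function
$T(m) = \dim_{\sfk}(R_m) - (d+1)\dim_{\sfk}(R_{m-n}) + \sum_{i}\dim_{\sfk}(R_{m-n-\mu_i})$,
verifies a discrete convexity property of $T$ on $[n,\infty)$, and invokes a realizability criterion (\cite[Proposition 4.6]{I1}) to conclude that $T = H_V$ for some $V$, without ever constructing $V$. You instead construct $V$ explicitly via Hilbert--Burch. Your route is sound and arguably more transparent, but one remark and one caution:

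First, the ``main obstacle'' you flag is illusory. Surjectivity requires only \emph{one} $V$ with the prescribed $\bmu$-type, not a generic one. The explicit bidiagonal matrix you allude to --- say $A$ with $A_{j-1,j} = s^{\mu_j}$, $A_{j,j} = t^{\mu_j}$, and zeros elsewhere --- has nonzero, pairwise-distinct monomial minors with $\gcd = 1$, so the ideal of minors has codimension $2$ and the Hilbert--Burch complex is exact; since every nonzero entry of $A$ has degree $\mu_i \ge 1$ and the minors have degree $|\bmu| \ge d \ge 2$, the resolution is automatically minimal, so this $V$ has $\bmu$-type exactly $\bmu$. Linear independence of the minors is forced by exactness in degree zero, so you need not check it separately. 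Multiplying by a fixed $h \in R_{n-|\sbmu|}$ and applying Proposition~\ref{muprop} then finishes. No genericity argument is needed.

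Second, be careful about invoking Theorem~\ref{relprimeclosure}: in the paper's logical structure that theorem is proved \emph{from} Lemma~\ref{HVmu} (together with Lemma~\ref{coverlem}, Lemma~\ref{mucomparisonlem}, and Theorem~\ref{key78thm}), so using it here would be circular. Since the explicit example already settles surjectivity, you should simply delete the appeal to semicontinuity/Theorem~\ref{relprimeclosure} rather than try to justify it.
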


\begin{remark}
We use $T$ to denote a Hilbert function of the form $H_V$ for $V \in
\Grass(d+1,R_n)$ in order to match the notation of \cite{I2}.  The
reason for using $T$ will become clear later in the appendix.
\end{remark}

\begin{proof}
Lemma~\ref{mucomparisonlem}(3) implies that $H_V \mapsto
\bmu$ gives a well-defined injection from (1) to (2).  It remains to
prove that it is a surjection, i.e., that every $d$-part partition
from (2) is the $\bmu$-type of an ideal $(V)$ for some $V
\in \Grass(d+1,R_n)$. 

Given $\bmu = (\mu_1,\dots,\mu_d)$ as in (2), define $T : \mathbb{N}
\to \mathbb{N}$ by  
\[
T(m) = \dim_{\sfk}(R_m) - (d+1)\dim_{\sfk}(R_{m-n}) + \sum_{i=1}^d
\dim_{\sfk}(R_{m-n-\mu_i}). 
\]
Using $\dim_{\sfk}(R_\ell) = \max(0,\ell+1)$, one easily check that $H$
satisfies
\[
T(m) = \begin{cases} m+1, & \mbox{if}\ 0 \le m \le n-1,\\ n-d, &
  \mbox{if}\ m = n,\\  
n-|\bmu|, & \mbox{if}\ m \gg 0.\end{cases}
\]
Furthermore, the inequality $\max(0,\ell-1) + \max(0,\ell+1) \ge
2\hskip.5pt \max(0,\ell)$ makes it easy to show that
\[
T(m-1) + T(m+1) \ge 2T(m) \text{ whenever } m \ge n.
\]
Setting $e(m) = T(m-1)-T(m)$, it follows that $e(m) \ge e(m+1)$ for
all $m \ge n$.  By \cite[Proposition 4.6]{I1}, we conclude that $T =
H_V$ for some $V \in \Grass(d+1,R_n)$.  This proves the desired
surjectivity.
\end{proof}

Given a Hilbert function $T$ as in Lemma~\ref{HVmu}(1), we let
$\mathrm{GA}_T(d+1,n)$ be the set of all $V \in \Grass(d+1,R_n)$ such
that $T$ is the Hilbert function of $R/(V)$ (see \cite[Definition
2.16]{I2}).  Since there are only finitely many $T$'s, the
$\mathrm{GA}_T(d+1,n)$'s partition $\Grass(d+1,R_n)$ into finitely
many disjoint sets.

From \cite[Theorems 2.17 and 2.32]{I2} we have

\begin{theorem}\label{key78thm}  
Let $T$ be a Hilbert function as in {\rm Lemma~\ref{HVmu}(1)}.  Then:
\begin{enumerate}
\item $\mathrm{GA}_T(d+1,n)$ is irreducible.
\item The Zariski closure
  $\overline{\mathrm{GA}_T}(d+1,n)=\bigcup_{T'\ge T}
  \mathrm{GA}_{T'}(d+1,n)$, where the union is over all $T'\ge T$ from
  {\rm Lemma~\ref{HVmu}(1)}.
\end{enumerate}
\end{theorem}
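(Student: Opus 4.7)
The plan is to combine upper semi-continuity of the Hilbert function on $\Grass(d+1,R_n)$ with an explicit irreducible parameterization of each stratum via $\bmu$-bases. For each $m \ge n$, the multiplication map $V \otimes R_{m-n} \to R_m$, $v \otimes r \mapsto vr$, has image $(V)_m$, so its rank is a lower semi-continuous function of $V \in \Grass(d+1,R_n)$, and hence $H_V(m) = \dim R_m - \dim(V)_m$ is upper semi-continuous. By Lemma \ref{HVmu}, only finitely many Hilbert functions $T$ arise, so $\{V : H_V \ge T\}$ is Zariski closed. Since $\mathrm{GA}_T(d+1,n)$ is contained in this set, its closure is too, and Lemma \ref{HVmu} yields $\overline{\mathrm{GA}_T}(d+1,n) \subseteq \bigcup_{T' \ge T}\mathrm{GA}_{T'}(d+1,n)$, giving one direction of part (2).

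\textbf{Paragraph 2 (irreducibility).} For part (1), let $\bmu = (\mu_1,\dots,\mu_d)$ correspond to $T$ via Lemma \ref{HVmu}, and set $\mathcal{M}_{\sbmu} = \bigoplus_{i=1}^{d} R_{\mu_i}^{d+1}$, an irreducible affine space parameterizing $(d+1) \times d$ matrices $A$ with $i$-th column in $R_{\mu_i}^{d+1}$. On the dense open subset $\mathcal{M}^o_{\sbmu}$ where the signed maximal minors $b_0(A),\dots,b_d(A)$ are linearly independent and relatively prime, together with a nonzero $h \in R_{n-|\bmu|}$, define $\phi(h, A) = \mathrm{Span}(h b_0, \dots, h b_d) \in \Grass(d+1,R_n)$. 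By Proposition \ref{muprop}, every $V \in \mathrm{GA}_T(d+1,n)$ admits such a description, so $\phi$ surjects onto $\mathrm{GA}_T(d+1,n)$. Since the image of an irreducible variety is irreducible, this proves part (1).

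\textbf{Paragraph 3 (reverse inclusion --- the main obstacle).} The remaining containment $\bigcup_{T' \ge T}\mathrm{GA}_{T'}(d+1,n) \subseteq \overline{\mathrm{GA}_T}(d+1,n)$ is the hardest step and is where the real work lies. By transitivity of Zariski closure it suffices to treat covers in the partial order on Hilbert functions, equivalently minimal moves on the associated $\bmu$-types (possibly changing $|\bmu|$ by one to accommodate a common factor). Given $V' \in \mathrm{GA}_{T'}$ with $\bmu$-basis matrix $A'$, one constructs a one-parameter family $A(t) \in \mathcal{M}_{\sbmu}$ whose generic fiber has column-degree profile $\bmu$ and whose limit at $t = 0$ (after rescaling and suitable row/column operations) recovers $A'$; applying $\phi$ then produces a curve $V(t) \in \Grass(d+1,R_n)$ with $V(0) = V'$ and $V(t) \in \mathrm{GA}_T(d+1,n)$ for $t \ne 0$, placing $V'$ in $\overline{\mathrm{GA}_T}(d+1,n)$. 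The explicit construction of these deformations, organized via the ancestor ideal machinery developed in \cite{I2}, is the technical heart of the proof.
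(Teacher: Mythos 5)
The paper does not actually prove Theorem~\ref{key78thm}; it is cited verbatim from [I2, Theorems 2.17 and 2.32], so there is no internal argument to compare against. What you have written is therefore an attempt to reprove a result the paper treats as known.

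Your first two paragraphs are correct and genuinely useful. The upper semi-continuity argument (rank of the multiplication map $V \otimes R_{m-n} \to R_m$ is lower semi-continuous, hence $H_V$ is upper semi-continuous, hence $\{V : H_V \ge T\}$ is closed) correctly yields $\overline{\mathrm{GA}_T}(d+1,n) \subseteq \bigcup_{T'\ge T}\mathrm{GA}_{T'}(d+1,n)$. The irreducibility argument via the Hilbert--Burch parameterization $\phi(h,A) = \mathrm{Span}(hb_0,\dots,hb_d)$ is also sound: since each column of $A \in \mathcal M_{\sbmu}$ has entries of degree $\mu_i \ge 1$, the resolution it induces is minimal, so $\phi$ maps into (and, by Proposition~\ref{muprop}, onto) $\mathrm{GA}_T(d+1,n)$, and the source $(R_{n-|\sbmu|}\setminus\{0\})\times\mathcal M^o_{\sbmu}$ is irreducible. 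This parameterization is in fact the same device used implicitly in the paper when computing dimensions; using it to get irreducibility directly is a clean alternative to invoking [I2, Theorem 2.17].

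The gap is in Paragraph~3, and it is not a cosmetic one: the reverse inclusion $\bigcup_{T'\ge T}\mathrm{GA}_{T'}\subseteq\overline{\mathrm{GA}_T}$ \emph{is} the theorem. Upper semi-continuity gives the closure relation for free in one direction; the hard content is showing that every stratum $\mathrm{GA}_{T'}$ with $T' > T$ is actually hit by limits from $\mathrm{GA}_T$. Your reduction to covers in the partial order is legitimate (by transitivity of closure), but the one-parameter family $A(t)$ that is supposed to specialize a generic $\bmu$-basis matrix to one of $\bmu$-type $\bmu'$ is never constructed. This is exactly the step that occupies most of [I2]: the moves can change $|\bmu|$ (acquiring or shedding a common factor), and in higher $d$ one must degenerate column degree profiles in a way compatible with the Hilbert--Burch condition that the minors remain well-defined and span the right space. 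The Harder--Narasimhan viewpoint alluded to after Lemma~\ref{mucomparisonlem} is what makes this tractable in [I2]. D'Andrea's paper [D] carries out the analogous degeneration explicitly for $d=2$, which gives a sense of what is required; your sketch asserts the existence of such a family without producing it. As it stands, Paragraph~3 restates what must be shown rather than showing it, so the proposal is incomplete precisely on the substantive direction.
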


We also have the following codimension result from \cite{I2}.
 
\begin{theorem}
\label{codthm}
Let $T$ be a Hilbert function as in {\rm Lemma~\ref{HVmu}(1)}, and let
$\bmu$ be the corresponding $d$-part partition. Then the codimension
of $\mathrm{GA}_T(d+1,n)$ in $\Grass(d+1,R_n)$ is given by the formula
\[
\cod(\mathrm{GA}_T(d+1,n)) = (n-|\bmu|)d+\sum_{i>j} \max(0, \mu_i-\mu_j-1).
\]
\end{theorem}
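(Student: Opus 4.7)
The plan is to transfer the codimension from the Grassmannian back to the parametrization stratum via Lemma~\ref{coverlem} and then compute $\dim \calP^\sbmu_{n,d}$ using a Hilbert--Burch matrix parametrization. By Lemma~\ref{HVmu} the Hilbert function $T$ determines and is determined by $\bmu$, and Lemma~\ref{coverlem}(1) gives
\[
\cod\bigl(\mathrm{GA}_T\subseteq\Grass(d+1,R_n)\bigr)
=\cod\bigl(\calP^\sbmu_{n,d}\subseteq\calCP_{n,d}\bigr)
=(d+1)(n+1)-\dim\calP^\sbmu_{n,d},
\]
so everything reduces to finding $\dim\calP^\sbmu_{n,d}$.

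I would parametrize $\calP^\sbmu_{n,d}$ using the description from Proposition~\ref{muprop}: each tuple factors as $h\cdot(b_0,\ldots,b_d)$ with $h\in R_{n-|\sbmu|}$, and $(b_0,\ldots,b_d)$ is the tuple of signed maximal minors of a $(d+1)\times d$ matrix $A$ whose $i$-th column lies in $R_{\mu_i}^{d+1}$. Let $\mathcal{M}^\sbmu$ be the space of such matrices, of total dimension $(d+1)\sum_i(\mu_i+1)=(d+1)(|\sbmu|+d)$. Counting graded entries $M_{ji}\in R_{\mu_i-\mu_j}$ of the graded automorphism group $G:=\mathrm{Aut}\bigl(\bigoplus_i R(-n-\mu_i)\bigr)$ gives
\[
\dim G = d + \sum_{i>j}(\mu_i-\mu_j+1) + \#\{i<j:\mu_i=\mu_j\}.
\]
Since $G$ acts on $A$ by right multiplication and scales the minor tuple by $\det M\in\sfk^\ast$ (which is absorbed by rescaling $h$), the surjection $(h,A)\mapsto h\cdot(\text{signed minors of }A)$ from $R_{n-|\sbmu|}\times\mathcal{M}^\sbmu$ onto $\calP^\sbmu_{n,d}$ has generic fibres of dimension $\dim G$, yielding
\[
\dim\calP^\sbmu_{n,d}=(n-|\sbmu|+1)+(d+1)(|\sbmu|+d)-\dim G.
\]

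Combining these and simplifying---using the identity $\sum_{i>j}(\mu_i-\mu_j)+\#\{i<j:\mu_i=\mu_j\}=\sum_{i>j}\max(0,\mu_i-\mu_j-1)+\binom{d}{2}$, which follows by splitting the sum according to whether $\mu_i>\mu_j$ or $\mu_i=\mu_j$---converts the codimension expression into $d(n-|\sbmu|)+\sum_{i>j}\max(0,\mu_i-\mu_j-1)$, as claimed. A sanity check against $\bmu_{\mathrm{max}}$ (where both terms vanish, giving codimension $0$) and $\bmu_{\mathrm{min}}$ (where one recovers $(d-1)(n-d-1)$ in agreement with Proposition~\ref{bigsmallprop}) is reassuring.

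The main obstacle will be justifying the fibre dimension: one must verify that the $G\times\sfk^\ast$-stabilizer of a generic $(h,A)$ is trivial and that the Hilbert--Burch construction indeed produces matrices with columns of precisely the prescribed degrees $\mu_i$ rather than some coarser graded shape. The first is a rank argument on right multiplication; the second rests on the uniqueness of $\bmu$ asserted in Proposition~\ref{muprop} together with the analysis in \cite{I1,I2} showing that $\mathrm{GA}_T$ is cut out by exactly the expected rank conditions on multiplication maps $V\otimes R_{m-n}\to R_m$.
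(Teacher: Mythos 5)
Your proof is correct and takes a genuinely different route from the paper. The paper proves this theorem by citing \cite[Theorem 2.24 (2.59)]{I2}, merely translating the notation ($D$ there is $\bmu$ in descending order, $n-|\bmu|$ is $\lim_m T(m)$). You instead give a self-contained incidence/fibration argument: parametrize the stratum by pairs $(h,A)$ via the Hilbert--Burch map and then quotient by the graded automorphism group $G=\mathrm{Aut}\bigl(\bigoplus_i R(-n-\mu_i)\bigr)$. Your dimension bookkeeping checks out: $\dim G = d^2 + \sum_{i>j}\max(0,\mu_i-\mu_j-1)$ once one applies your combinatorial identity, and subtracting from $(n-|\bmu|+1)+(d+1)(|\bmu|+d)$ recovers exactly the formula of Theorem~\ref{gendim}, hence the stated codimension.

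The two worries you flag at the end are both resolvable more cleanly than you suggest. The stabilizer of $(h,A)$ under the twisted $G$-action is trivial simply because $A$ is injective as a map of free modules (its maximal minors are coprime, hence nonzero), so $A(M-I)=0$ forces $M=I$; no genericity is needed. For the second worry, the ``coarser graded shape'' cannot occur: since every $\mu_i\ge 1$, the matrix $A$ has no unit entries, and the $d+1$ linearly independent minors are minimal generators of the ideal, so the complex $0\to\bigoplus R(-|\bmu|-\mu_i)\xrightarrow{A}R(-|\bmu|)^{d+1}\to I\to 0$ (exact by Buchsbaum--Eisenbud once the minors are coprime) is automatically a \emph{minimal} free resolution, forcing the $\bmu$-type to equal the column-degree vector $\bmu$ exactly. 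Thus the preimage of $\calP^\sbmu_{n,d}$ is precisely the open set in $R_{n-|\sbmu|}\times\mathcal M^\sbmu$ where $h\ne 0$ and the minors are coprime and independent, so the fiber-dimension count goes through. The upshot: your argument also yields irreducibility of $\calP^\sbmu_{n,d}$ for free, whereas the paper gets irreducibility and codimension from two separate citations to \cite{I2}. The cost of your approach is that it reproves part of \cite{I2} rather than invoking it; the benefit is that it is elementary and explicit.
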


This follows from \cite[Theorem 2.24 (2.59)]{I2} since $n-|\bmu| =
\lim_{m\to\infty} T(m)$ and the partition $D$ from \cite[Definition
2.21]{I2} is just $\bmu$ written in descending order.

\begin{proof}[Proof of Theorems \ref{dimPndthm}, \ref{relprimeclosure}, 
\ref{gendim} and \ref{genclosure}.]  First note that Theorems
\ref{dimPndthm} and \ref{relprimeclosure} follow from Theorems
\ref{gendim} and \ref{genclosure} by intersecting with the open set
$\calP_{n,d} \subseteq \calCP_{n,d}$.

The next observation is that if $T$ corresponds to $\bmu$ via
Lemma~\ref{HVmu}, then $\Grass^\sbmu_{n,d}$ from Lemma~\ref{coverlem}
is precisely the set $\mathrm{GA}_T(d+1,n)$ since $R/(V)$ has Hilbert
function $T = H_V$ if and only if $(V)$ has $\bmu$-type $\bmu$.

Theorem~\ref{genclosure} is now an immediate consequence of Theorem
\ref{key78thm} via Lemmas~\ref{coverlem}, \ref{mucomparisonlem} and
\ref{HVmu}.  The irreducibility assertion of Theorem~\ref{gendim}
follows from Theorem~\ref{key78thm} and Lemma~\ref{coverlem}, and the
same results imply that
\[
\dim(\calP^\sbmu_{n,d}) = (d+1)(n+1) - \Big((n-|\bmu|)d+\sum_{i>j}
\max(0, \mu_i-\mu_j-1)\Big) 
\]
since $\dim(\calP_{n,d}) = \dim(R_n^{d+1}) = (d+1)(n+1)$.  This
easily reduces to the formula given in Theorem~\ref{gendim}.

It remains to show that $\calP^\sbmu_{n,d}$ is open in its Zariski
closure.  This follows from the disjoint union
\[
\overline{\calP^\sbmu_{n,d}} =  \calP^\sbmu_{n,d} \cup \bigcup_{\sbmu'
  < \sbmu} \calP_{n,d}^{\sbmu'} 
\]
since the large union on the right is easily seen to be closed by
Theorem~\ref{genclosure}.   
\end{proof}

\subsection{Proofs for Section~\ref{nonproper}}\label{nonproperproofs}

We begin with Proposition~\ref{nonproperprop1}.

\begin{proof}[Proof of Proposition~\ref{nonproperprop1}]
Half of the proof was given in Section~\ref{nonproper}.  For the other
half, assume $n \mid k$ and $n \ge kd$.  Then $n-kd+k \ge k$, so that
\[
\bmu = (\underbrace{k,\dots,k}_{d-1},n-kd+k)
\]
is a $d$-part partition of $n$.  Since $\bmu$ is divisible by $k$,
Theorem~\ref{nonproperthm} implies that $\calP_{n,d}^\sbmu$ contains
parametrizations of generic degree $k$. Thus the same is true for
$\calP_{n,d}$.
\end{proof}

We next turn to Proposition~\ref{nonproperprop}.

\begin{proof}[Proof of Proposition~\ref{nonproperprop}] 
First assume $\calP_{n,d}^\sbmu$ contains a parametrization
$(a_0,\dots,a_d)$ of generic degree $k$.  By L\"uroth's Theorem (see
Section 6.1 of \cite{SWP}), $n = km$, $m \in \Z$, and there are
relatively prime $\alpha,\beta \in R_k$ and $b_0,\dots,b_d \in R_m$
such that
\begin{equation}
\label{abab}
a_i(s,t) = b_i(\alpha(s,t),\beta(s,t)),\quad i = 0,\dots,d.
\end{equation}
(\cite{SWP} focuses on the affine case, but their treatment of
non-proper parametrizations easily translates to the projective
setting used here.)

Note that the $b_i$ are linearly independent and relatively prime
since the $a_i$ are (by assumption). Let $\tilde\bmu = (\tilde\mu_1, \dots,
\tilde\mu_d)$ be the $\bmu$-type of $(b_0,\dots,b_n)$.  Thus
$\tilde\mu_1 \le \cdots \le \tilde\mu_d$ and $\tilde\mu_1 + \cdots
+\tilde\mu_d = m$ since the $b_i$ are relatively prime.  

Substituting $\alpha,\beta$ into a $\bmu$-basis of
$(b_0,\dots,b_n)$ gives syzygies of $(a_0,\dots,a_n)$ of degrees
$k\tilde\bmu = (k\tilde\mu_1,\dots,k\tilde\mu_d)$.  We call these
\emph{composed syzygies}. If we can prove that the composed syzygies
form a $\bmu$-basis of $(a_0,\dots,a_n)$, then we will get the desired
result, namely $\bmu = k\tilde\bmu$.

Let $A'$ be the $(d+1)\times d$ matrix formed by the composed syzygies.
Since the maximal minors of a $\bmu$-basis of $(b_0,\dots,b_n)$ give the
$b_i$ up to sign, it follows that the maximal minors of $A'$ give the
$a_i$ up to sign.  Now let $A$ be the $(d+1)\times d$ matrix formed by
a $\bmu$-basis of $(a_0,\dots,a_n)$.  Its maximal minors also give the
$a_i$ up to sign.  Expressing each composed syzygy in terms of the
$\bmu$-basis gives a matrix equation
\[
A' = AQ
\]
where $Q$ is a $d\times d$ matrix of homogeneous polynomials.  Taking
maximal minors gives $a_i = a_i \det(Q)$ for all $i$, so that
$\det(Q) = 1$.  Hence $Q$ is an invertible matrix of scalars, which
proves that the composed syzygies are a $\bmu$-basis of
$(a_0,\dots,a_n)$, hence $\bmu$ is divisible by $k$.

To complete the proof, we next assume that $\bmu$ is divisible by $k$.
Then $\calP_{n,d}^\sbmu$ contains a parametrization of generic degree
$k$ by Theorem~\ref{nonproperthm}.
\end{proof}

The proof of Theorem~\ref{nonproperthm} will require more work.  

\begin{proof}[Proof of Theorem~\ref{nonproperthm}] 
First, note that $m\ge d$ is needed for the non-proper locus to be
non-empty.  Since $k$ divides $\bmu$ implies $n =
k\tilde\mu_1+...+k\tilde\mu_d$ and each $ \tilde\mu_i \ge 1$, this
implies $n \ge kd$ so that $m = n/k \ge d$.

Fix $d \ge 2$.  We will prove the theorem for all $k > 1$ and $n \ge
d$ by complete induction on $n$.  The base case $n=d$ is vacuously
true since $n = d$ implies $\bmu = (1,\dots,1)$, which is divisible by
no $k > 1$.

Now assume $n > d$ and that the theorem is true for all $m$ with $d
\le m < n$.  Take $\calP_{n,d}^\sbmu$ where $\bmu$ is a multiple of
$k$ and write $\bmu = k\tilde\bmu$, $\tilde\bmu =
(\tilde\mu_1,\dots,\tilde\mu_d)$.  As noted in the discussion leading
up to the theorem, this implies $k \mid n$.  Set $m = n/k$ and note
that $m = \tilde\mu_1+\cdots+\tilde\mu_d \ge d$.  Let
\[
U = \{(b_0,\dots,b_d) \in \calP_{m,d}^{\tilde\sbmu} \mid
(b_0,\dots,b_d) \text{ is proper}\}.
\]
Since $m < n$, our inductive hypothesis implies that $U$ is nonempty,
constructible, and Zariski dense in $\calP_{m,d}^{\tilde\sbmu}$.  In
particular, $\dim(U) = \dim(\calP_{m,d}^{\tilde\sbmu})$.

Let $W = \{(\alpha,\beta) \in R_k\times R_k \mid \alpha,\beta
\text{ are relatively prime}\}$.  Composing $(b_0,\dots,b_d) \in
U$ with $(\alpha,\beta) \in W$ gives $(a_0,\dots,a_d)$ as in
\eqref{abab}.  The $a_i$ have degree $n = km$ and are relatively prime
and linearly indepdendent since the $b_i$ are.  Furthermore, the
argument following \eqref{abab} shows that $(a_0,\dots,a_d)$ has
$\bmu$-type $\bmu = k\tilde\bmu$.  It follows that composition gives a
map
\begin{equation}
\label{nonproper:UWmap}
U \times W \longrightarrow \calP_{n,d}^{\sbmu},
\end{equation}
and the proof of Proposition~\ref{nonproperprop} shows that the image
of this map consists of all generic degree $k$ parametrizations in
$\calP_{n,d}^{\sbmu}$.  It follows easily that this locus is nonempty,
constructible, and has irreducible Zariski closure.  

To determine the codimension, we need to study the nonempty fibers of
\eqref{nonproper:UWmap}.  If a parametrization $(a_0,\dots,a_d)$ has
generic degree $k$ and image curve $C$, then the function field
$\sfk(C)$ can be identified with $\sfk(\alpha/\beta)$ for some
$(\alpha,\beta) \in V$ (in \cite{SWP},
$\alpha/\beta$ is denoted $R(t)$).  Since $\sfk(\alpha/\beta) =
\sfk(\alpha'/\beta')$ if and only if $\alpha/\beta$ and
$\alpha'/\beta'$ are related by a linear fractional transform, we see
that $(\alpha,\beta)$ is unique up to the action of
$\mathrm{GL}(2,\sfk)$.  Since this group has dimension $4$, it follows
that the nonempty fibers of \eqref{nonproper:UWmap} all have dimension
4.  Hence the generic degree $k$ locus in $\calP_{n,d}^{\sbmu}$ has
dimension
\[
\dim(U) + \dim(V) - 4 = \dim(\calP_{m,d}^{\tilde\sbmu}) + 2(k+1) - 4 = 
\dim(\calP_{m,d}^{\tilde\sbmu}) + 2(k-1).
\]
Hence the codimension is
\begin{equation}
\label{nonproper:codim1}
\dim(\calP_{n,d}^{\sbmu}) - \dim(\calP_{m,d}^{\tilde\sbmu}) - 2(k-1).
\end{equation}
Recall from Theorem~\ref{relprimedim} that
\begin{equation}
\label{nonproper:dimformulas}
\begin{aligned}
\dim(\calP_{n,d}^{\sbmu}) &= (d+1)(n+1) - \sum_{i > j}
\max(0,k\tilde\mu_i - k\tilde\mu_j
-1).\\ \dim(\calP_{m,d}^{\tilde\sbmu}) &= (d+1)(m+1) - \sum_{i > j}
\max(0,\tilde\mu_i - \tilde\mu_j -1).
\end{aligned}
\end{equation}
The following lemma will help compute the difference
$\dim(\calP_{n,d}^{\sbmu}) - \dim(\calP_{m,d}^{\tilde\sbmu})$.

\begin{lemma} 
\label{nonproper:Slem}
Given $\tilde\bmu = (\tilde\mu_1,\dots,\tilde\mu_d)$, let
$S(\tilde\bmu) = \sum_{i>j} \max(0,\tilde\mu_i-\tilde\mu_j)$.  Then:
\begin{enumerate}
\item $S(\tilde\bmu) = \sum_{i>j,\tilde\mu_i > \tilde\mu_j}
  \tilde\mu_i-\tilde\mu_j$.  
\item $\sum_{i>j} \max(0,\tilde\mu_i-\tilde\mu_j-1) = S(\tilde\bmu) -
  C$, where $C =\# \{(i,j) \mid i >j,\ \tilde\mu_i > \tilde\mu_j\}$.
\item $S(\tilde\bmu) \le (m-d)(d-1)$.
\end{enumerate}
\end{lemma}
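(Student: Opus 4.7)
My plan is to verify each of the three parts by elementary manipulations that exploit the ordering $\tilde\mu_1 \le \cdots \le \tilde\mu_d$. For part (1), I note that $i > j$ forces $\tilde\mu_i - \tilde\mu_j \ge 0$, so the $\max$ in the definition of $S(\tilde\bmu)$ may be dropped; terms with $\tilde\mu_i = \tilde\mu_j$ contribute zero and can be discarded, leaving exactly the restricted sum. For part (2), I compare the summands of $\sum_{i>j}\max(0,\tilde\mu_i-\tilde\mu_j-1)$ and $S(\tilde\bmu)$ pair by pair: when $\tilde\mu_i=\tilde\mu_j$ both summands vanish; when $\tilde\mu_i > \tilde\mu_j$, so that $\tilde\mu_i - \tilde\mu_j \ge 1$, the first summand equals the second minus $1$. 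Summing over the $C$ strict pairs gives the identity.

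For part (3), my approach is to write $\tilde\mu_i = 1 + e_i$ with $0 \le e_1 \le \cdots \le e_d$ and $\sum_i e_i = m - d$. Using part (1), a direct reindexing (counting, for each fixed $i$, the pairs in which $e_i$ appears with a positive or negative sign) yields
\[
S(\tilde\bmu) \;=\; \sum_{i>j}(e_i - e_j) \;=\; \sum_{i=1}^d (2i - d - 1)\,e_i.
\]
Since $(d-1)(m-d) = (d-1)\sum_i e_i$, the desired bound reduces to $\sum_i (d-i)\,e_i \ge 0$, which is immediate from $d \ge i$ and $e_i \ge 0$.

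Nothing here is deep; the whole lemma is bookkeeping, and the only step requiring any care is the re-summation in part (3). As a sanity check, equality in (3) holds precisely when $e_1=\cdots=e_{d-1}=0$ and $e_d = m-d$, i.e., at $\tilde\bmu=(1,\ldots,1,m-d+1)$, which fits the theme that $\bmu_{\mathrm{min}}$ is the extreme case for the non-proper codimension bound stated in Theorem~\ref{nonproperthm}.
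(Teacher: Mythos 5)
Your parts (1) and (2) match the paper's own treatment (the paper calls (1) "straightforward" and does (2) by the same pair-by-pair comparison). For part (3), however, you take a genuinely different route. The paper argues by a discrete exchange move: starting from $\tilde\bmu = (1,\dots,1,\tilde\mu_{i_0},\dots,\tilde\mu_d)$ with $\tilde\mu_{i_0}>1$, it passes to $\tilde\bmu' = (1,\dots,\tilde\mu_{i_0}-1,\dots,\tilde\mu_d+1)$ and tracks how each class of pairs $(i,j)$ changes to show $S(\tilde\bmu)\le S(\tilde\bmu')$; iterating reaches the extremal partition $(1,\dots,1,m-d+1)$, whose $S$-value is $(m-d)(d-1)$. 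You instead substitute $\tilde\mu_i = 1+e_i$, exploit part (1) together with the ordering to write
\[
S(\tilde\bmu) = \sum_{i>j}(e_i-e_j) = \sum_{i=1}^d (2i-d-1)\,e_i,
\]
and then observe that $(d-1)(m-d) - S(\tilde\bmu) = 2\sum_{i=1}^d (d-i)\,e_i \ge 0$. This closed-form re-summation is correct and arguably cleaner than the paper's exchange argument: it avoids the case analysis of which pairs change under the move and yields the exact defect, making the equality case transparent in one line. The exchange argument, on the other hand, emphasizes the combinatorial monotonicity of $S$ under the dominance-type move and is closer in spirit to the partial order used elsewhere in the paper, but both proofs are elementary and of comparable length.
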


\begin{proof}
The proof of (1) is straightforward, and for (2), we similarly get the
formula
\[ 
\sum_{i>j} \max(0,\tilde\mu_i-\tilde\mu_j-1) = \!\!\sum_{i>j,\tilde\mu_i
  > \tilde\mu_j}\! \tilde\mu_i-\tilde\mu_j-1. 
\]
From here, (2) follows easily.

We turn to (3).  If $m = d$, then the desired inequality is true since
the only possible $\tilde\bmu$ is $(1,\dots,1)$, for which $S = 0$.
Hence we may assume $m > d$.  Now write $\tilde\bmu =
(1,\dots,1,\tilde\mu_{i_0},\dots,\tilde\mu_d)$, where $\tilde\mu_{i_0}
> 1$.  Let $\tilde\bmu' = (1,\dots,\tilde\mu_{i_0}-1, \dots,
\tilde\mu_d+1)$.  If we can show that $S(\tilde\bmu) \le
S(\tilde\bmu')$, then it will follow that
\[
S(\tilde\bmu) \le S(1,\dots,1,m-d+1) = \sum_{d > j} (m-d+1) - 1 =
(m-d)(d-1),
\]
and the lemma will be proved.

When we compare $S(\tilde\bmu)$ and $S(\tilde\bmu')$, we only need to
consider pairs $i > j$ where $i = d$ or $i = i_0$ or $j = i_0$ (note
$i > j$ implies $j \ne d$).  We analyze these as follows:
\begin{itemize}
\item For terms with $i = d$, we have increased $\tilde\mu_d$ by $1$.
  Since $\tilde\mu_d+1$ is guaranteed to be bigger than every other
  entry, this increases $S\tilde(\bmu')$ by $d-1$.
\item For terms with $j = i_0$, we have decreased $\tilde\mu_{i_0}$ by
  $1$ and since we are subtracting, these terms increase
  $S(\tilde\bmu')$.
\item For terms with $i = i_0$, the possible $j$'s are
  $1,\dots,i_0-1$, and since we have decreased $\tilde\mu_{i_0}$ by
  $1$, we decrease $S(\tilde\bmu')$ by $i_0-1$.
\end{itemize}
Since $i_0 \le d$, the increase offsets the decrease, and
$S(\tilde\bmu) \le S(\tilde\bmu')$ follows.
\end{proof}
\noindent
{\it Completion of Proof of Theorem 4.5.}
By \eqref{nonproper:dimformulas} and Lemma~\ref{nonproper:Slem}, it
follows that
\begin{align*}
\dim(\calP_{n,d}^{\sbmu}) &= (d+1)(km+1) - (kS-C)\\
  \dim(\calP_{m,d}^{\tilde\sbmu}) &= (d+1)(m+1) - (S-C)
\end{align*}
since $n = km$ and $S=\sum_{i>j} \max(0,\tilde\mu_i-\tilde\mu_j) =
\sum_{i>j,\tilde\mu_i > \tilde\mu_j}
\tilde\mu_i-\tilde\mu_j$. Combining this with
\eqref{nonproper:codim1}, we see that the codimension is
\begin{align*}
&\dim(\calP_{n,d}^{\sbmu})-\dim(\calP_{m,d}^{\tilde\sbmu}) -2(k-1)\\
=\ &(d+1)(km+1) - (kS-C) -\big( (d+1)(m+1) - (S-C)\big)-2(k-1)\\
=\ &(k-1)(m(d+1) - S-2).
\end{align*}
This proves the desired formula for the codimension.

Since $S \le (m-d)(d-1)$ by Lemma~\ref{nonproper:Slem}, it follows
that 
\[
m(d+1) - S -2 \ge m(d+1) - (m-d)(d-1) -2 = d(d-1) +2m-2.  
\]
This easily gives the lower bound $(k-1)(d(d-1) +2m-2)$ stated in (1)
of the theorem.  Furthermore, $d(d-1) +2m-2 > 0$ since $m \ge d \ge 2$.
It follows that the codimension is always positive, completing the
proof of (1).

For (2), we consider all $k > 1$ that divide $\bmu$.  For any such
$k$, the locus of generic degree $k$ parametrizations has positive
codimension.  Since there are only finitely many such $k$'s, the same
is true for the non-proper locus.  Hence $\calP_{n,d}^\sbmu$ contains
a nonempty Zariski open subset consisting of proper parametrizations.
This subset is dense since $\calP_{n,d}^\sbmu$ is irreducible, and (2) follows.

When $d=1$, we have $S=C=0$ and the codimension formula readily
follows from \eqref{nonproper:codim1} and
\eqref{nonproper:dimformulas}.
\end{proof}

\subsection{Proofs for Section~\ref{smallest}}
\label{smallestproofs}
Ancestor ideals, not mentioned so far in this appendix, play a central
role in \cite{I2}.  As in Section~\ref{smallest}, an ideal $I
\subseteq R$ generated by elements of degree $n$ has an \emph{ancestor
ideal}, which is the largest homogeneous ideal of $R$ that agrees
with $I$ in degrees $m \ge n$.  When $I = (V)$ for $V \in
\Grass(d+1,R_n)$, we will follow \cite{I2} and denote its ancestor
ideal by $\overline{V}$.

We denote the Hilbert function of $R/\overline{V}$ by
$H_{\overline{V}}$.  These Hilbert functions are characterized in
\cite[Theorem 2.19]{I2}.  Given such a function $H$, let
$\Grass_H(d+1,n)$ consist of all $V \in \Grass(d+1,R_n)$ such that $H
= H_{\overline{V}}$ (see \cite[Definition 1.11]{I2}).

We define the partial order $\ge_\calP$ on these Hilbert functions by
setting $H' \ge_\calP H$ if and if $H'(m) \ge H(m)$ for $m \ge n$ and
$H'(m) \le H(m)$ for $0 \le m \le n$.  Then we have \cite[Theorem
  2.32]{I2}:

\begin{theorem}
\label{closureAmuthm}
$\Grass_H(d+1,n)$ is irreducible and open dense in its Zariski closure,
which is given by 
\[
\overline{\Grass_H}(d+1,n)=\bigcup_{H' \ge_\calP H}\Grass_{H'}(d+1,n).
\]
\end{theorem}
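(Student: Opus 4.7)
The plan is to reduce everything to semicontinuity statements about the Hilbert function of the ancestor ideal, and then to establish irreducibility and the reverse closure inclusion via a Grassmannian fibration.

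First I would analyze how $H_{\overline{V}}$ varies in a family $\{V_t\}_{t \in T}$ of $(d+1)$-dimensional subspaces of $R_n$. The ancestor ideal admits two explicit descriptions by degree: for $m \ge n$, $\overline{V}_m = (V)_m = R_{m-n}\cdot V$ is the image of the multiplication map $R_{m-n} \otimes V \to R_m$, and for $m < n$, $\overline{V}_m = \{f \in R_m : R_{n-m}\cdot f \subseteq V\}$ is the kernel of the composed map $R_m \hookrightarrow \mathrm{Hom}(R_{n-m}, R_n) \to \mathrm{Hom}(R_{n-m}, R_n/V)$. Both are defined via morphisms of vector bundles over $T$, so $\dim \overline{V}_m$ is upper semicontinuous in $t$. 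Translating through $H_{\overline{V_t}}(m) = \dim R_m - \dim \overline{V_t}_m$, this says $H_{\overline{V_t}}(m)$ is lower semicontinuous for $m \ge n$ and upper semicontinuous for $m < n$. Thus if $V_t \in \Grass_H(d+1,n)$ for $t$ in a dense open of $T$, the limit $V_0$ satisfies $H_{\overline{V_0}} \ge_\calP H$. This proves the inclusion $\overline{\Grass_H}(d+1,n) \subseteq \bigcup_{H' \ge_\calP H} \Grass_{H'}(d+1,n)$ and shows that $\Grass_H(d+1,n)$ is open in its closure.

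For irreducibility, I would exploit the Grassmannian fibration $V \mapsto \overline{V}$. Two subspaces $V, V' \in \Grass_H(d+1,n)$ have the same ancestor ideal iff $V = V' = \overline{V}_n$, because the definition of the ancestor ideal forces $\overline{V}_n \supseteq V$, and these have the same dimension $d+1$ (since $H(n)$ is fixed). So the map $V \mapsto \overline{V}$ is in fact injective, and $\Grass_H(d+1,n)$ is identified with the variety of homogeneous ideals with Hilbert function determined by $H$. One then parameterizes such ideals recursively by degree, using that the generators in each degree form an open subvariety of a Grassmannian in the quotient of $R_m$ by the piece coming from lower degrees, with dimensions pinned by $H$. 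Each fiber of this recursive parameterization is an open subset of a Grassmannian and hence irreducible, giving irreducibility of $\Grass_H(d+1,n)$.

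The hardest step is the reverse inclusion: given $H' \ge_\calP H$ and $V' \in \Grass_{H'}(d+1,n)$, exhibit a one-parameter family $V_t$ with $V_0 = V'$ and $V_t \in \Grass_H(d+1,n)$ for $t \ne 0$. I would attack this by induction on the number of values of $m$ at which $H'$ and $H$ differ, reducing to the case of a single ``atomic'' move in the partial order. For such an atomic move, one constructs the deformation explicitly: choose a generator of $\overline{V'}$ in the degree where the ideal is ``too large'' and perturb it by a generic element of $R_n$ to destroy the extra syzygies or the extra collinearity forcing $\overline{V'}$ to grow, arranging the deformation so that the Hilbert function in every other degree stays fixed. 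The main obstacle is verifying that these local perturbations genuinely change $H_{\overline{V_t}}$ in exactly the prescribed way without causing uncontrolled jumps elsewhere; this requires the detailed combinatorial control of admissible Hilbert functions from \cite[Thm.~2.19]{I2} and is the technical heart of the argument carried out in \cite{I2}.
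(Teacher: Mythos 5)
The paper does not prove this theorem: it is quoted verbatim from \cite[Theorem 2.32]{I2}, where the argument is carried out in the language of Hilbert functions and ancestor ideals, so there is no in-paper proof to compare against. Your attempt is a sensible high-level outline of how a closure-and-irreducibility theorem of this type is usually proved, but it contains a genuine error and two substantive gaps.

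The semicontinuity directions in your forward-inclusion argument are backwards, and in fact your own text is internally inconsistent. For a family of linear maps, the \emph{image} dimension is lower semicontinuous (rank $\ge r$ is open) while the \emph{kernel} dimension is upper semicontinuous. Since $\overline{V}_m$ is an image for $m \ge n$ and a kernel for $m < n$, $\dim \overline{V_t}_m$ is lower semicontinuous for $m \ge n$ and upper semicontinuous for $m < n$ --- not ``upper semicontinuous in $t$'' in both cases as you assert. Consequently $H_{\overline{V_t}}(m)$ is \emph{upper} semicontinuous for $m \ge n$ and \emph{lower} semicontinuous for $m < n$, the opposite of what you write. With the signs corrected one does get $H_{\overline{V_0}} \ge_\calP H$ for a limit $V_0$, so the conclusion you draw is right, but the justification as written would actually give $H_{\overline{V_0}} \le_\calP H$.

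The remaining two steps are honest sketches rather than proofs. The irreducibility argument via a recursive Grassmannian parameterization of the graded ideal by degree is the right intuition for $R = \sfk[s,t]$, but the crucial point --- that the resulting tower of Grassmannian bundles is a fibration by nonempty irreducible sets, and that the admissibility constraints coming from $H$ produce exactly the locus $\Grass_H(d+1,n)$ --- is precisely the content established in \cite{I2} and is not verified by the sketch. For the reverse closure inclusion, you correctly identify the strategy (reduce to a cover in the $\ge_\calP$ order and exhibit a one-parameter degeneration realizing it), but you defer the construction itself, and this requires a precise classification of which Hilbert functions are adjacent in $\ge_\calP$ as well as a uniform deformation argument; this is the ``technical heart'' you rightly attribute to \cite{I2}. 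As a self-contained proof the attempt is incomplete, though the overall architecture matches how the result is in fact proved.
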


For the rest of the appendix, we will work in the relatively prime case.
Thus all partitions $\bmu = (\mu_1,\dots,\mu_d)$ that appear will be
partitions of $n$, i.e., $|\bmu| = n$.

Given $V \in \Grass(d+1,R_n)$, let $h_1, \ldots , h_\tau$ be minimal
generators of the ancestor ideal $\overline{V}$.  We assume $\deg(h_1)
\ge \cdots \ge \deg(h_\tau)$.  Note also that $\deg(h_i) \le n$ for
all $i$ since $(V)$ and $\overline{V}$ are equal in degrees $\ge n$.
Then \cite[(2.43)]{I2} implies the following.

\begin{lemma}
\label{ancprop}
Suppose $V \in \Grass(d+1,n)$ has $\bmu$-type $\bmu =
(\mu_1,\dots,\mu_d)$ with $|\bmu| = n$ and ancestor ideal
$\overline{V} = \langle h_1, \ldots , h_\tau\rangle$ as above.  Then
$\tau = d+1 -\#\{i \mid \mu_i = 1\}$ and we have a minimal free
resolution
\[
0 \longrightarrow {\textstyle\bigoplus_{i=1}^d} R(-n-\mu_i)
\longrightarrow {\textstyle\bigoplus_{i=1}^\tau} R(-\deg(h_i))
\longrightarrow R \longrightarrow R/\overline{V} \longrightarrow 0.
\]
\end{lemma}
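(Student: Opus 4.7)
The plan is to derive both assertions by combining Proposition~\ref{muprop}'s minimal free resolution of $I = (V)$ with a careful description of $\overline{V}$ in degrees below $n$, along the lines of \cite[(2.43)]{I2}. The central bijection is between linear syzygies of $I$ (the summands $R(-n-1)$ in Proposition~\ref{muprop}) and minimal generators of $\overline{V}$ of degree $n-1$.

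First I would compute $H_{\overline{V}}$. Since $\overline{V}_m = (V)_m$ for $m \ge n$, the high-degree part is determined by Proposition~\ref{muprop} and hence by $\bmu$. For $m < n$, the relation $\overline{V}_m = \{h \in R_m : R_{n-m}\cdot h \subseteq V\}$ combined with the apolarity formalism of \cite{I2} expresses $\dim_\sfk \overline{V}_m$ in terms of $\bmu$ as well. From this Hilbert function I would extract the graded Betti numbers of $R/\overline{V}$, using that $R = \sfk[s,t]$ has global dimension two. The count of minimal generators of $\overline{V}$ in degree $m$ equals $\dim_\sfk \overline{V}_m - \dim_\sfk R_1\cdot \overline{V}_{m-1}$. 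Each linear syzygy of $I$ contributes one $h \in \overline{V}_{n-1}$ whose multiples $R_1 h$ span a two-dimensional subspace of $V$; this adds one minimal generator in degree $n-1$ and removes two from degree $n$, for a net decrease of one, giving $\tau = d+1 - \#\{i : \mu_i = 1\}$.

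Next I would construct the resolution's maps. The map $\bigoplus_{i=1}^\tau R(-\deg h_i) \to R$ sends basis vectors to $h_i$ and has image $\overline{V}$. For the left-hand map, I would translate each syzygy $(A_0,\ldots,A_d)$ of the $\bmu$-basis of $I$ into a relation among the $h_k$: using the decomposition $I_n = \bigoplus_k R_{n-\deg h_k}\cdot h_k$ to write each $a_j = \sum_k p_{jk} h_k$, then substituting into $\sum_j A_j a_j = 0$ to obtain a syzygy $\sum_k \bigl(\sum_j A_j p_{jk}\bigr) h_k = 0$ on the $h_k$. Exactness and minimality follow by matching the graded Hilbert series computed in the first step.

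The main obstacle will be verifying that the induced map on syzygies carries graded degrees exactly $R(-n-\mu_i)$, with no unexpected cancellations from the linear-syzygy contributions. This requires choosing the expansions $p_{jk}$ compatibly with the $\bmu$-basis so that the resulting matrix is in graded-minimal form; the delicate bookkeeping here is essentially the content of \cite[(2.43)]{I2}, which I would invoke to finish the proof.
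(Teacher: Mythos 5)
The paper gives no proof of this lemma; it is stated as a direct consequence of \cite[(2.43)]{I2}, and your sketch ultimately falls back on the same citation, so the ``route'' is not genuinely different. However, your proposed construction has a real gap that is worth spelling out, because carrying it through would reveal a slip in the displayed resolution itself. For an ideal of finite colength in $R=\sfk[s,t]$, a minimal free resolution of the quotient has the form $0\to G_2\to G_1\to R\to R/\overline{V}\to 0$ with $\mathrm{rank}(G_2)=\mathrm{rank}(G_1)-1$. Here $\mathrm{rank}(G_1)=\tau=d+1-\#\{i\mid\mu_i=1\}$, so $\mathrm{rank}(G_2)=d-\#\{i\mid\mu_i=1\}$, not $d$. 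The left-hand module must be $\bigoplus_{i\,:\,\mu_i>1}R(-n-\mu_i)$; the Euler-characteristic count already rules out the full $\bigoplus_{i=1}^d R(-n-\mu_i)$ as soon as some $\mu_i=1$.

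Your proposed left-hand map exhibits precisely this failure: if you substitute $a_j=\sum_k p_{jk}h_k$ into a linear syzygy $\sum_j A_j a_j=0$, the resulting relation $\sum_k(\sum_j A_j p_{jk})h_k=0$ is the zero relation, because a linear syzygy of $I$ is a Koszul relation among two monomial multiples $s^a t^b h_k$, $s^{a'}t^{b'}h_k$ of a single $h_k$. (In the $\bmu=(1,1,n-2)$ situation of Example~\ref{WJGexample}, the linear syzygy $sa_0+ta_1=0$ becomes $(st-ts)h_1=0$.) So the map you construct kills all $\mu_i=1$ summands and cannot be injective from $\bigoplus_{i=1}^d R(-n-\mu_i)$; ``matching Hilbert series'' cannot rescue exactness. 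Restricting the domain to the $\mu_i>1$ summands yields the correct minimal resolution. One further small point: the heuristic ``each linear syzygy contributes one $h\in\overline{V}_{n-1}$, replacing two degree-$n$ generators'' only covers generators $h_k$ with $\alpha_k=2$. A generator with $\alpha_k>2$ lies in degree below $n-1$ and contributes $\alpha_k-1$ linear (Koszul) syzygies, so the right bookkeeping is $\#\{i\mid\mu_i=1\}=\sum_k(\alpha_k-1)=(d+1)-\tau$, from which $\tau=d+1-\#\{i\mid\mu_i=1\}$ follows; your final count is correct, but the intermediate bijection is not.
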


This proposition implies in particular that
\begin{equation}
\label{Vdecomp}
V = \bigoplus_{i=1}^\tau R_{n-\deg(h_i)}\cdot h_i,
\end{equation}
so that if we set $\alpha_i = n+1-\deg(h_i)$, then
$\alpha_1+\cdots+\alpha_\tau = d+1$ and $\alpha_1 \le \cdots \le
\alpha_\tau$ since $\deg(h_1) \ge \cdots \ge \deg(h_\tau)$.  Thus
$\mathcal{A} = (\alpha_1,\dots,\alpha_\tau)$ is a $\tau$-part
partition of $d+1$.  Note also that $\bmu$ determines the length of
$\mathcal{A}$ since $\tau = d+1-\#\{i \mid \mu_i = 1\}$ by
Lemma~\ref{ancprop}.

It follows that $V$ gives two partitions, $\bmu$ and $\mathcal{A}$.
These partitions have a strong relation to the Hilbert function of the
ancestor ideal as follows.

\begin{lemma}
\label{twopartition}
Suppose $V \in \Grass(d+1,R_n)$ has partitions $\bmu$ and
$\mathcal{A}$, with $|\bmu| = n$, and Hilbert function $H =
H_{\overline{V}}$ of $R/\overline{V}$.  Given another $V' \in
\Grass(d+1,R_n)$ with partitions $\bmu'$ and $\mathcal{A}'$, such that $|\bmu'| = n$,
and $H' = H_{\overline{V}'}$, then
\[
H' \ge_\calP H \iff \bmu' \le \bmu \text{ and } \mathcal{A}' \le
\mathcal{A}.
\]
\end{lemma}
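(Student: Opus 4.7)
My plan is to split the partial order $\geq_{\calP}$ into its two constituent conditions---behavior in degrees $m \geq n$, controlled by $\bmu$, and in degrees $0 \leq m \leq n$, controlled by $\mathcal{A}$---and handle each range independently. The starting point is an explicit formula for $H_{\overline{V}}(m)$ read off from the minimal free resolution of Lemma~\ref{ancprop}: using $\dim_{\sfk} R_\ell = \max(0,\ell+1)$ together with $\deg(h_i) = n+1-\alpha_i$, one gets
\[
H_{\overline{V}}(m) = \max(0,m+1) - \sum_{i=1}^{\tau} \max(0, m-n+\alpha_i) + \sum_{i=1}^{d} \max(0, m-n-\mu_i+1).
\]

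For the high-degree part, $\overline{V}$ and $(V)$ agree in degrees $\geq n$, so $H_{\overline{V}}(m) = H_V(m)$ for $m \geq n$. Since moreover $H_V(m) = m+1 = H_{V'}(m)$ for $0 \leq m < n$ (because $(V)$ is generated in degree $n$), the condition $H'(m) \geq H(m)$ for all $m \geq n$ is equivalent to $H_{V'} \geq H_V$ on all of $\N$, which by Lemma~\ref{mucomparisonlem}(2) is exactly $\bmu' \leq \bmu$.

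For the low-degree part, if $m < n$ then $m - n - \mu_i + 1 \leq -1 < 0$ so the third sum vanishes, leaving $H_{\overline{V}}(n - \ell) = (n - \ell + 1) - \sum_{i=1}^{\tau} \max(0, \alpha_i - \ell)$ with $\ell = n - m \geq 1$. Since the $m=n$ case of $\geq_{\calP}$ is automatic (both sides equal $n-d$), the inequality $H'(m) \leq H(m)$ for $0 \leq m \leq n$ reduces to
\[
\sum_{i=1}^{\tau} \max(0, \alpha_i - \ell) \leq \sum_{j=1}^{\tau'} \max(0, \alpha_j' - \ell) \quad \text{for all } \ell \geq 1.
\]

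The main obstacle is recognizing this ``excess-function'' inequality as the classical characterization of dominance for partitions of the fixed integer $d+1$: it holds for all $\ell$ if and only if the descending reorderings satisfy $\mathcal{A} \leq_{\mathrm{dom}} \mathcal{A}'$. Because both partitions have total $d+1$, dualizing via the involution from ascending to descending ordering---and padding with zeros on the left when $\tau \ne \tau'$---turns this into $\mathcal{A}' \leq \mathcal{A}$ in the ascending partial-sum order of Definition~\ref{orderdef}. Combining the two cases gives the claimed equivalence.
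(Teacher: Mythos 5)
Your proof is correct and follows essentially the same decomposition as the paper's: split $\ge_{\calP}$ by degree range, reduce the high-degree part to $H_{V'}\ge H_V$ and hence to $\bmu'\le\bmu$ via Lemma~\ref{mucomparisonlem}, and read the low-degree part off the minimal free resolution of $R/\overline{V}$ to obtain a comparison of the $\alpha_i$'s. The one genuine (if small) improvement is that you appeal to the standard ``excess-function'' characterization of dominance for partitions of the fixed integer $d+1$, which is a two-sided equivalence and also handles the case $\tau\ne\tau'$ after padding; the paper instead refers back to the graph argument in the proof of Lemma~\ref{mucomparisonlem} and explicitly omits the converse implication, so your version is marginally more self-contained.
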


\begin{proof}
First suppose $H' \ge_\calP H$.  Since the ideals $(V)$ and
$\overline{V}$ are equal in degrees $\ge n$, it follows that $H_V(m) =
H_{\overline{V}}(m)$ for $m \ge n$.  The same is true for $V'$.  Since
$H_V(m) = H_{V'}(m) = m+1$ for $0 \le m \le n-1$, our assumption $H'
\ge_\calP H$ implies that $H_{V'} \ge H_V$, and then $\bmu' \le \bmu$
follows from Lemma~\ref{mucomparisonlem}.

From Lemma~\ref{ancprop}, we see that for $m \le n$,
\begin{align*}
H(m) &= \dim_{\sfk}(R_m) - \sum_{i=1}^\tau
\dim_{\sfk}(R_{m-\deg(h_i)})\\[-5pt] 
&= m+1 - \sum_{i=1}^\tau \max(0,m-\deg(h_i)+1).
\end{align*}
We write this as $H(m) = m+1 - G(m)$, where $G(m) = \sum_{i=1}^\tau
\max(0,m-\deg(h_i)+1)$, and similarly $H'(m) = m+1 - G'(m)$, where
$G'(m) = \sum_{i=1}^\tau \max(0,m-\deg(h_i')+1)$.

Then $H' \ge_\calP H$ implies $H'(m) \le H(m)$ for $m \le n$, so that
$G'(m) \ge G(m)$ for the same $m$.  The proof of
Lemma~\ref{mucomparisonlem} then implies that
\begin{equation}
\label{deghineq}
(\deg(h_\tau'),\dots,\deg(h_1')) \le (\deg(h_\tau),\dots,\deg(h_1))
\end{equation}
since $\deg(h_\tau) \le \cdots \le \deg(h_1)$, similarly for
$\deg(h_i')$.  Using $\alpha_i = n+1-\deg(h_i)$ and $\sum_{i=1}^\tau
\alpha_i= d+1$, one sees that
\[
\deg(h_\tau) + \cdots + \deg(h_{j+1})
=\alpha_1+\cdots+\alpha_j-(d+1)+(\tau-j)(n+1).
\]
It follows that \eqref{deghineq} is equivalent to $\mathcal{A}' =
(\alpha_1',\dots,\alpha_\tau') \le \mathcal{A} =
(\alpha_1,\dots,\alpha_\tau)$.

We omit the proof of the other implication.  
\end{proof}

Since the Hilbert function $H_V$ of $R/(V)$ equals the Hilbert
function $H_{\overline{V}}$ of $R/\overline{V}$ for $m \ge n$, we call
$T = H_V$ the \emph{tail} of $H = H_{\overline{V}}$ (see
\cite[Definition 2.16]{I2}).  This explains why we used $T$ for the
Hilbert functions occurring earlier in the appendix.

The final result we need is a consequence of \cite[Theorem 2.24]{I2}.

\begin{theorem}
\label{maincodthm}
Let $H$ be associated to partitions $\bmu = (\mu_1,\dots,\mu_d)$ and
$\mathcal{A} = (a_1,\dots,a_\tau)$ as above, and assume $|\bmu| = n$.
Then the codimension of $\Grass_H(d+1,n))$ in $\Grass(d+1,n)$ satisfies
\[
\cod(\Grass_H(d+1,n))= \sum_{i>j} \max(0,\mu_i - \mu_j - 1) +
\sum_{i>j} \max(0,a_i - a_j - 1).
\]
\end{theorem}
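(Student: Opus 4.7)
The plan is to decompose the codimension via the chain
\[
\Grass_H(d+1,n) \subseteq \mathrm{GA}_T(d+1,n) \subseteq \Grass(d+1,R_n),
\]
where $T$ denotes the tail of $H$, i.e., the Hilbert function of $R/(V)$ for $V \in \Grass_H(d+1,n)$. Since $(V)$ and $\overline V$ agree in degrees $\ge n$, $T$ is determined by $H$; by Lemma \ref{HVmu}, $T$ corresponds to the $\bmu$-type alone. With $|\bmu|=n$ by hypothesis, Theorem \ref{codthm} gives
\[
\cod\bigl(\mathrm{GA}_T(d+1,n)\bigr) = \sum_{i>j}\max(0,\mu_i-\mu_j-1),
\]
exactly the first summand of the claimed formula. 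The theorem will follow once one establishes
\[
\cod\bigl(\Grass_H(d+1,n) \subseteq \mathrm{GA}_T(d+1,n)\bigr) = \sum_{i>j}\max(0,a_i-a_j-1).
\]

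To prove this relative codimension, I would parametrize $\Grass_H(d+1,n)$ via minimal generators of the ancestor ideal, using Lemma \ref{ancprop} and the decomposition $V = \bigoplus_{i=1}^\tau R_{a_i-1}\cdot h_i$. The parameter space is an open subset of $\prod_{i=1}^\tau R_{n+1-a_i}$ consisting of relatively prime tuples $(h_1,\ldots,h_\tau)$ whose ancestor ideal has the prescribed $\bmu$-type; it has dimension $\sum_i(n+2-a_i)=\tau(n+2)-(d+1)$. Two tuples determine the same $V$ exactly when they differ by a graded change of minimal generators of $\overline V$, i.e., a $\tau\times\tau$ matrix whose $(i,j)$-entry is a form of degree $a_i-a_j$ when this is nonnegative (and zero otherwise), with invertible diagonal. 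Taking the quotient and subtracting from $\dim\mathrm{GA}_T(d+1,n)$ should yield the stated relative codimension after a combinatorial simplification parallel to the one carried out in Theorem \ref{codthm}.

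The main obstacle is verifying two nonobvious properties of this parametrization: (i) that a generic relatively prime tuple $(h_1,\ldots,h_\tau)$ of the prescribed degrees really produces an ancestor ideal with the specified $\bmu$-type, so the parametrization maps into $\Grass_H$ rather than a larger stratum $\Grass_{H'}$ with $H'<_\calP H$; and (ii) that the fibers have generic dimension equal to that of the change-of-basis group. Both rest on the detailed analysis in \cite[Sec.\ 2]{I2}, where the Hilbert-Burch structure of height-$2$ perfect ideals in $R=\sfk[s,t]$ realizes $\bmu$ (syzygy shifts) and $\mathcal A$ (generator shifts) as dual sides of the same minimal free resolution. This symmetry is the intuitive reason the same combinatorial expression $\sum_{i>j}\max(0,\cdot-\cdot-1)$ appears twice in the final formula, once from the outer codimension in $\Grass(d+1,R_n)$ and once from the refinement within $\mathrm{GA}_T(d+1,n)$.
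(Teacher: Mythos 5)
Your overall plan is sound and genuinely different from the paper's argument: the paper's own proof of Theorem~\ref{maincodthm} is a one-line translation of \cite[Theorem 2.24, (2.61)]{I2}, whereas you decompose the codimension through the chain $\Grass_H \subseteq \mathrm{GA}_T \subseteq \Grass(d+1,R_n)$, peel off the first summand from Theorem~\ref{codthm}, and try to get the second summand by parametrizing $\Grass_H$ by generator tuples and quotienting by the graded change-of-basis group. That reduction is correct, and it does cleanly explain why the formula has the shape it does. But the dimension count you give for $\Grass_H$ is wrong in general, and your ``obstacle (i)'' is not a technical verification to be outsourced to \cite{I2} --- it is a genuine error in the count.

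Concretely, you assert that the set of relatively prime tuples $(h_1,\dots,h_\tau)\in\prod_i R_{n+1-a_i}$ whose ancestor ideal has $\bmu$-type $\bmu$ is \emph{open} in $\prod_i R_{n+1-a_i}$ and hence has dimension $\tau(n+2)-(d+1)$. This is true exactly when $\bmu$ is the generic (maximal in the order of Definition~\ref{orderdef}) $\bmu$-type compatible with $\mathcal{A}$; for non-generic $\bmu$ that locus is a \emph{proper closed} subvariety of strictly smaller dimension, so your formula overestimates $\dim\Grass_H$ and underestimates the codimension. A counterexample: take $d=4$, $n=8$, $\bmu=(1,1,2,4)$, $\mathcal{A}=(1,1,3)$ (so $\tau=3$, $\deg h_i=8,8,6$). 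Your count gives $\dim\Grass_H = 25 - 11 = 14$, hence relative codimension $\dim\mathrm{GA}_T - 14 = 15-14 = 1$, whereas $\sum_{i>j}\max(0,a_i-a_j-1)=2$. What your count actually computes is $\dim\Grass_{H'}$ for the \emph{generic} compatible $\bmu'=(1,1,3,3)$, which is $14$; the stratum you want, $\Grass_H$, has dimension $13$. To fix the argument you would need to compute the codimension of the locus of tuples realizing the prescribed (non-generic) syzygy degrees inside $\prod_i R_{n+1-a_i}$ --- a determinantal condition on the Hilbert--Burch matrix --- and show this extra codimension plus your quotient count reproduces $\sum_{i>j}\max(0,a_i-a_j-1)$. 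This missing determinantal estimate is exactly the combinatorial heart of \cite[Theorem 2.24]{I2}, so the work cannot be deferred as routine verification.
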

\begin{proof} Recall that the 
the partition $D$ of  \cite[Theorem 2.24]{I2} is our $\mu$ written in descending order, and $A$ there is our $\mathcal A$ written in descending order. The equation (2.61) of Theorem 2.24 there
can be rewritten when $c_H=0$ (no common factor of $(a_1,\ldots,a_n)$) as $\cod(\Grass_H(d+1,n))=\ell(A)+\ell(D)$, which is the expression above.
\end{proof}
\begin{proof}[Proofs of Theorems~\ref{ancestor1} and \ref{ancestor2}] 
Set $\bmu = \bmu_{\mathrm{min}} = (1,\dots,1,n-d+1)$ and note that
$\tau = (d+1) - (d-1) = 2$ since $n \ge d+1$.  Then Theorem
\ref{ancestor1} follows immediately from Lemma~\ref{ancprop} and
\eqref{Vdecomp}.

Next observe that the set $\calP^\sbmu_{n,d,\mathcal{A}}$ from Theorem
\ref{ancestor2} satisfies
\begin{equation}
\label{AGrassH}
\calP^\sbmu_{n,d,\mathcal{A}} = \pi^{-1}(\Grass_H(d+1,n)),
\end{equation}
where $\pi$ is from Lemma \ref{coverlem} and $H$ is the ancestor
Hilbert function corresponding to partitions $\bmu$ and $\mathcal{A}$.
Then the codimension formulas from Theorems~\ref{codthm} and
\ref{maincodthm}, together with Lemma \ref{coverlem}, show that
$\calP^\sbmu_{n,d,\mathcal{A}}$ has codimension $\max(0,a_2-a_1-1)$ in
$\calP^\sbmu_{n,d}$ (remember that $|\bmu| = n)$.

Finally, we compute the Zariski closure of
$\calP^\sbmu_{n,d,\mathcal{A}}$ in $\calP^\sbmu_{n,d}$.  Applying
$\pi^{-1}$ to Theorem \ref{closureAmuthm} and intersecting with
$\calP^\sbmu_{n,d}$, we obtain
\[
\overline{\mathcal P}^{\sbmu}_{n,d,\mathcal A}
=\bigg(\bigcup_{\sbmu' \le \sbmu,\mathcal A'\le \mathcal A}\mathcal
P^{\sbmu'}_{n,d,\mathcal A'}\bigg)\cap \calP^\sbmu_{n,d} 
\]
by \eqref{AGrassH} and Lemmas \ref{coverlem} and \ref{twopartition}.
Since $\calP^{\sbmu'}_{n,d,\mathcal{A}'} \subseteq
\calP^{\sbmu'}_{n,d}$ is disjoint from $\calP^\sbmu_{n,d}$ for $\bmu'
\ne \bmu$, the expression on the right reduces to the formula in
Theorem~\ref{ancestor2}.
\end{proof}

We note that $\calP^{\sbmu}_{n,d,\mathcal{A}} \subseteq
\calP^\sbmu_{n,d}$ can be defined for any $d$-part partition $\bmu$ of
$n$ and any $\tau$-part partition $\mathcal{A}$ of $d+1$, where $\tau
= d+1-\#\{i \mid \mu_i =1\}$ as in Lemma~\ref{ancprop}.
Theorems~\ref{ancestor1} and \ref{ancestor2} easily generalize to this
case using the above results from \cite{I2}.  Furthermore, if
$\mathcal{A} = (\alpha_1,\dots,\alpha_\tau)$ and $\tau \le d$, then
parametrizations in $\calP^{\sbmu}_{n,d,\mathcal{A}}$ give curves that
lie on the $\tau$-dimensional rational normal scroll
$S_{\alpha_1-1,\dots,\alpha_\tau-1} \subseteq \PP^d$.

\normalsize

\end{document}